\DeclareMathAlphabet{\mathpzc}{OT1}{pzc}{m}{it}
\newtheorem{theorem}{Theorem}[section]
\newtheorem{theorem-definition}[theorem]{Theorem-Definition}
\newtheorem{lemma-definition}[theorem]{Lemma-Definition}
\newtheorem{definition-prop}[theorem]{Proposition-Definition}
\newtheorem{prop}[theorem]{Proposition}
\newtheorem{lemma}[theorem]{Lemma}
\newtheorem{cor}[theorem]{Corollary}
\newtheorem{definition}[theorem]{Definition}
\theoremstyle{definition}
\newtheorem{question}[theorem]{Question}
\newtheorem{remark}[theorem]{Remark}
\newcommand{\LL}{\ensuremath{\mathbb{L}}}
\newcommand{\N}{\ensuremath{\mathbb{N}}}
\newcommand{\Z}{\ensuremath{\mathbb{Z}}}
\newcommand{\Q}{\ensuremath{\mathbb{Q}}}
\newcommand{\R}{\ensuremath{\mathbb{R}}}
\newcommand{\C}{\ensuremath{\mathbb{C}}}
\newcommand{\cT}{\ensuremath{\mathscr{T}}}
\newcommand{\cV}{\ensuremath{\mathscr{V}}}
\renewcommand{\R}{\ensuremath{\mathbb{R}}}
\renewcommand{\C}{\ensuremath{\mathbb{C}}}
\newcommand{\Spec}{\ensuremath{\mathrm{Spec}\,}}
\newcommand{\refl}{\ensuremath{\ast}}
\newcommand{\diag}{\ensuremath{\mathrm{Diag}}}
\newcommand{\Jord}{\ensuremath{\mathrm{Jord}}}
\newcommand{\ab}{\ensuremath{\mathrm{ab}}}
\newcommand{\tor}{\ensuremath{\mathrm{tor}}}
\newcommand{\pot}{\ensuremath{\mathrm{pot}}}
\newcommand{\Lie}{\ensuremath{\mathrm{Lie}}}
\newcommand{\Id}{\ensuremath{\mathrm{Id}}}
\newcommand{\et}{\ensuremath{\mathrm{et}}}
\newcommand{\ef}{\ensuremath{\mathrm{ef}}}
\newcommand{\an}{\ensuremath{\mathrm{an}}}
\newcommand{\Supp}{\ensuremath{\mathrm{Supp}}}
\newcommand{\dualm}{\ensuremath{\breve{m}}}
\newcommand{\Gr}{\ensuremath{\mathrm{Gr}}}
\numberwithin{equation}{section}
\begin{document}
\title{Jumps and monodromy of abelian varieties}
\author{Lars Halvard Halle}
\address{Institut f{\"u}r Algebraische Geometrie\\
Gottfried Wilhelm Leibniz Universit{\"a}t Hannover\\ Welfengarten
1
\\
30167 Hannover \\ Deutschland}
\curraddr{Matematisk Institutt\\
Universitetet i Oslo\\Postboks 1053
\\
Blindern\\0316 Oslo\\ Norway} \email{larshhal@math.uio.no}

\author[Johannes Nicaise]{Johannes Nicaise}
\address{KULeuven\\
Department of Mathematics\\ Celestijnenlaan 200B\\3001 Heverlee \\
Belgium} \email{johannes.nicaise@wis.kuleuven.be}

\thanks{The first author was partially supported by the Fund for Scientific
Research-Flanders (G.0318.06) and by DFG under grant Hu 337/6-1.
 The second author was partially supported by the Fund for Scientific Research - Flanders (G.0415.10) and ANR-06-BLAN-0183.
}

\begin{abstract}
We prove a strong form of the motivic monodromy conjecture for
 abelian varieties, by showing that the order of the unique pole
 of the motivic zeta function is equal to the maximal
rank of a Jordan block of the corresponding monodromy eigenvalue.
Moreover, we give a
 Hodge-theoretic interpretation of the fundamental invariants
 appearing in the proof.
 \\  MSC2000: 11G10, 14D05, 14D07
\end{abstract}
\maketitle

\section{Introduction}
Let $K$ be a henselian discretely valued field with algebraically
closed residue field $k$, and let $A$ be a tamely ramified abelian
$K$-variety of dimension $g$. In \cite{HaNi}, we introduced the
{\em motivic zeta function} $Z_A(T)$ of $A$. It is a formal power
series over the localized Grothendieck ring of $k$-varieties
$\mathcal{M}_k$, and it measures the behaviour of the N\'eron
model of $A$ under tame base change. We showed that
$Z_A(\LL^{-s})$ has a unique pole, which coincides with Chai's
{\em base change conductor} $c(A)$ of $A$, and that the order of
this pole equals $1+t_{\pot}(A)$, where $t_{\pot}(A)$ denotes the
{\em potential toric rank} of $A$. Moreover, we proved that for
every embedding of $\Q_\ell$ in $\C$, the value $\exp(2\pi c(A)i)$
is an eigenvalue of the tame monodromy transformation on the
$\ell$-adic cohomology of $A$ in degree $g$. The main ingredient
of the proof is Edixhoven's filtration on the special fiber of the
N\'eron model of $A$ \cite{edix}.

As we've explained in \cite{HaNi}, this result is a global version
of Denef and Loeser's motivic monodromy conjecture for
hypersurface singularities in characteristic zero. Denef and
Loeser's conjecture relates the poles of the motivic zeta function
of the singularity to monodromy eigenvalues on the nearby
cohomology. It is a motivic generalization of a conjecture of
Igusa's for the $p$-adic zeta function, which relates certain
arithmetic properties of polynomials $f$ in $\Z[x_1,\ldots,x_n]$
(namely, the asymptotic behaviour of the number of solutions of
the congruence $f\equiv 0$ modulo powers of a prime $p$) to the
structure of the singularities of the complex hypersurface defined
by the equation $f=0$. The conjectures of Igusa and Denef-Loeser
have been solved, for instance, in the case $n=2$
\cite{Loepadic}\cite{Rod}, but the general case remains wide open.
We refer to \cite{Ni-japan} for a survey.

There also exists a stronger form of Igusa's conjecture, which
says that the real parts of the poles of the $p$-adic zeta
function of $f$
 are roots of the Bernstein polynomial $b_f(s)$
of $f$, and that the order of each pole is at most the
multiplicity of the corresponding root of $b_f(s)$. This stronger
conjecture also has a motivic generalization, replacing the
$p$-adic zeta function by the motivic zeta function, and taking
for $f$ any polynomial with coefficients in a field of
characteristic zero (or, more generally, any regular function on a
smooth algebraic variety over a field of characteristic zero).

It is well-known that, for every complex polynomial $f$ and every
root $\alpha$ of $b_f(s)$, the value $\alpha'=\exp(2\pi i \alpha)$
is a monodromy eigenvalue on the nearby cohomology $R\psi_f(\C)_x$
of $f$ at some closed point $x$ of the zero locus $H_f$ of $f$
\cite{kashi2}\cite{Mal2}. Moreover, if $H_f$ has an isolated
singularity at $x$, then the multiplicity $m_{\alpha}$ of $\alpha$
as a root of the local Bernstein polynomial $b_{f,x}(s)$ of $f$ at
$x$ is closely related to the maximal size $m_{\alpha'}$ of the
Jordan blocks with eigenvalue $\alpha'$ of the monodromy
transformation on $R^{n-1}\psi_f(\C)_x$. In particular,
$m_{\alpha}\leq m_{\alpha'}$ if $\alpha\notin \Z$
\cite[7.1]{Mal2}.

The aim of the present paper is twofold. First, we prove a strong
form of the motivic monodromy conjecture for abelian varieties.
There is no good notion of Bernstein polynomial in our setting,
but we can look at the size of the Jordan blocks. We show that the
order $1+t_{\pot}(A)$ of the unique pole $s=c(A)$
 of the motivic zeta function $Z_A(\LL^{-s})$ is equal to the maximal rank of
 a
 Jordan block of the corresponding monodromy eigenvalue on the degree $g$ cohomology of $A$ (Theorem \ref{thm-blocksize}).
 Next, we use the
 theory of N\'eron models of variations of
 Hodge structures to give a
 Hodge-theoretic interpretation of the jumps in Edixhoven's filtration.
 This is done in Theorems \ref{theo-mhs} and \ref{thm-weight}.
 In
 \cite[2.7]{HaNi}, we speculated on a generalization of the monodromy
 conjecture to Calabi-Yau varieties over $\C((t))$ (i.e., smooth, proper,
 geometrically connected $\C((t))$-varieties with trivial canonical sheaf); we hope that the translation
 of Edixhoven's invariants to Hodge theory will help to extend the
 proof of the monodromy conjecture to that setting.

 In
 order to obtain these results, we divide Edixhoven's jumps into three
 types: toric, abelian, and dual abelian. The basic properties of these
 types are discussed in Section \ref{sec-tormult}. Not all of these results are
 used in the proofs of the main results of the paper. We include
 them for the sake of completeness and because we believe that they are of independent interest.  The reader who is
 only interested in Theorems \ref{thm-jord},
 \ref{thm-blocksize}, \ref{theo-mhs} and \ref{thm-weight} may skip Lemma \ref{lemma-exact}, Proposition \ref{prop-dualeq} and all the results in Section \ref{sec-tormult} after
 Proposition \ref{prop-jumps}.

 The different types of jumps are related to the monodromy
 transformation on the Tate module of $A$ in Section
 \ref{sec-jumpsmon}. Toric jumps correspond to monodromy
 eigenvalues with Jordan block of size two, and the abelian and dual
 abelian jumps to monodromy eigenvalues with Jordan block of size
 one (Theorem \ref{thm-jord}). If $K=\C((t))$, then the abelian
 and dual abelian jumps can be distinguished by looking at the
 Hodge type in the limit mixed Hodge structure (Theorem \ref{thm-weight}).

\subsection*{Acknowledgements} We are grateful to the referee for
carefully reading the manuscript and suggesting several
improvements to the paper. The second author is indebted to
Antoine Chambert-Loir  for helpful suggestions concerning Section
\ref{subsec-weight}.

\section{Preliminaries and notation}
We denote by $R$ a Henselian discrete valuation ring, with
quotient field $K$ and algebraically closed residue field $k$. We
denote by $K^a$ an algebraic closure of $K$, by $K^s$ the
separable closure of $K$ in $K^a$, and by $K^t$ the tame closure
of $K$ in $K^s$. We fix a topological generator $\sigma$ of the
tame monodromy group $G(K^t/K)$. We denote by $p$ the
characteristic exponent of $k$, and by $\N'$ the set of integers
$d>0$ prime to $p$. We denote by
$$(\cdot)_s:(R-\mathrm{Schemes})\to
(k-\mathrm{Schemes}):\mathcal{X}\mapsto
\mathcal{X}_s=\mathcal{X}\times_R k$$ the special fiber functor.

For every abelian variety $B$ over a field $F$, we denote its dual
abelian variety by $B^{\vee}$. For every abelian $K$-variety $A$
with N\'eron model $\mathcal{A}$, we denote by $t(A)$, $u(A)$ and
$a(A)$ the reductive, resp. unipotent, resp. abelian rank of
$\mathcal{A}_s^o$. We call these values the {\em toric}, resp.
{\em unipotent}, resp. {\em abelian} rank of $A$. Obviously, their
sum equals the dimension of $A$.

By Grothendieck's semi-stable reduction theorem, there exists a
 finite extension $K'$ of $K$ in $K^s$ such that $A\times_K
K'$ has semi-abelian reduction \cite[IX.3.6]{sga7a}. This means
that the identity component of the special fiber of its N\'eron
model is a semi-abelian $k$-variety; equivalently, $u(A\times_K
K')=0$. The value $t_{\pot}(A)=t(A\times_K K')$ is called the {\em
potential toric rank} of $A$, and the value
$a_{\pot}(A)=a(A\times_K K')$ the {\em potential abelian rank}. It
follows from \cite[IX.3.9]{sga7a} that these values are
independent of the choice of $K'$.
 We say that $A$ is {\em tamely
ramified} if we can take for $K'$ a {\em tame} finite extension of
$K$ (since $k$ is algebraically closed, this means that the degree
$[K':K]$ is prime to $p$) .

For every scheme $S$, every $S$-group scheme $G$ and every integer
$n>0$, we denote by $n_G:G\rightarrow G$ the multiplication by
$n$, and by $_{n}G$ its kernel.

If $\mathcal{S}$ is a set, and $g:\mathcal{S}\rightarrow \R$ a
function with finite support, we set
$$\|g\|=\sum_{s\in \mathcal{S}}g(s).$$
We denote the support of $g$ by $\Supp(g)$.

\begin{definition}
For every function
$$f:\Q/\Z\rightarrow \R$$ we define its {\em reflection} $$f^{\refl}:\Q/\Z\rightarrow \R$$ by
$$f^{\refl}(x)=f(-x).$$

We call $f$ {\em complete} if for every $x\in \Q/\Z$, the value
$f(x)$ only depends on the order of $x$ in the group $\Q/\Z$. We
say that $f$ is {\em semi-complete} if $f+f^{\refl}$ is complete.
\end{definition}

Consider a function
$$f:\Q/\Z\rightarrow \N$$ and assume that there exists an element
$e$ of $\Z_{>0}$ such that $\Supp(f)$ is contained in
$((1/e)\Z)/\Z$. Let $F$ be any algebraically closed field such
that $e$ is prime to the characteristic exponent $p'$ of $F$. For
each generator $\zeta$ of $\mu_e(F)$, we put
$$P_{f,\zeta}(t)=\prod_{i\in ((1/e)\Z)/\Z}(t-\zeta^{i\cdot
e})^{f(i)}$$ in $F[t]$. For each integer $d>0$, we denote by
$\Phi_d(t)$ the cyclotomic polynomial whose roots are the
primitive $d$-th roots of unity. We say that $\Phi_d(t)$ is
$F$-tame if $d$ is prime to $p'$.

\begin{lemma}\label{lemm-comp}
The function $f$ is complete if and only if for some generator
$\zeta$ of $\mu_e(F)$, the polynomial $P_{f,\zeta}(t)$ is the
image of a product $Q_f(t)$ of $F$-tame cyclotomic polynomials
under the unique ring morphism
$$\rho:\Z[t]\rightarrow F[t]$$
mapping $t$ to $t$. If $f$ is complete, then $P_{f,\zeta}(t)$ is
independent of $\zeta$ and $e$, and $Q_f(t)$ is unique. In that
case, if we choose a primitive $e$-th root of unity $\xi$ in an
algebraic closure $\Q^a$ of $\Q$, then
$$Q_f(t)=\prod_{i\in ((1/e)\Z)/\Z}(t-\xi^{i\cdot
e})^{f(i)}.$$
\end{lemma}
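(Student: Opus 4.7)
The plan is to exploit the order-preserving correspondence between $((1/e)\Z)/\Z$ and $\mu_e(F)$ given by $k/e \mapsto \zeta^k$: an element $k/e$ of order $d = e/\gcd(k,e)$ in $\Q/\Z$ maps to $\zeta^k$, which has order $d$ in $\mu_e(F)$, i.e.\ is a primitive $d$-th root of unity. Since $e$ is prime to $p'$, so is every divisor $d$ of $e$; hence each $\Phi_d$ with $d \mid e$ is $F$-tame, and its image $\rho(\Phi_d)$ is separable in $F[t]$ with root set equal to the primitive $d$-th roots of unity in $F$.

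For the ``if'' direction I would assume $f$ is complete, let $g(d)$ denote the common value of $f$ on elements of order $d$, and set $Q_f(t) := \prod_{d \mid e} \Phi_d(t)^{g(d)}$. Grouping the factors of $P_{f,\zeta}$ according to the order of $\zeta^k$ and using the bijection above then yields $\rho(Q_f) = P_{f,\zeta}$, producing the required $F$-tame product. For the converse, if $P_{f,\zeta}(t) = \rho(Q)(t)$ with $Q = \prod_d \Phi_d^{m_d}$ a product of $F$-tame cyclotomic polynomials, I would compare multiplicities of each root $\zeta^k$ on both sides: because $\zeta^k$ appears in $\rho(\Phi_{d'})$ only when $d' = e/\gcd(k,e)$, one reads off $f(k/e) = m_{e/\gcd(k,e)}$, so $f(k/e)$ depends only on the order of $k/e$, and $f$ is complete.

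Uniqueness of $Q_f$ follows from the same coprimality observation: the polynomials $\rho(\Phi_d)$, as $d$ ranges over distinct $F$-tame positive integers, have pairwise disjoint root sets in $F$, so the exponents $m_d$ are determined by $\rho(Q_f) = P_{f,\zeta}$. Independence of $P_{f,\zeta}$ from the choice of generator is immediate from completeness, since any other generator is $\zeta^m$ with $\gcd(m,e) = 1$ and multiplication by $m$ induces an order-preserving permutation of $\Z/e$. Independence from $e$ reduces, by passing to a common multiple $e'$ of two candidate values and choosing a compatible generator in $\mu_{e'}(F)$, to the observation that indices $k'$ not of the form $m \cdot (e'/e)$ contribute trivially since $f(k'/e') = 0$. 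The explicit formula for $Q_f$ in $\Q^a[t]$ is then obtained by rerunning the same grouping argument with a primitive $e$-th root of unity $\xi \in \Q^a$ in place of $\zeta$. I do not foresee a real obstacle: the lemma is essentially combinatorial bookkeeping built on the correspondence between orders in $\Q/\Z$ and orders of roots of unity in $F$; the one point that genuinely requires $F$-tameness is the separability of $\rho(\Phi_d)$, without which the root-multiplicity comparison in $F[t]$ could fail.
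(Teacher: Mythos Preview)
Your proposal is correct and follows essentially the same idea as the paper's proof: both rest on the order-preserving bijection $k/e \mapsto \zeta^k$ between $((1/e)\Z)/\Z$ and $\mu_e(F)$, and the fact that completeness of $f$ is exactly the condition that the linear factors of $P_{f,\zeta}$ group into cyclotomic polynomials. The paper packages the forward direction slightly differently: rather than writing $Q_f = \prod_{d\mid e}\Phi_d^{g(d)}$ and verifying $\rho(Q_f)=P_{f,\zeta}$ by root-counting in $F$, it sets $Q_f(t)=\prod_i (t-\xi^{ie})^{f(i)}$ in $\Q^a[t]$, observes that completeness forces $Q_f\in\Z[t]$, and transports via the ring morphism $\widetilde\rho:\Z[\xi][t]\to F[t]$ sending $\xi\mapsto\zeta$; this yields $\rho(Q_f)=P_{f,\zeta}$ and the explicit formula in one stroke. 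For uniqueness the paper cites an external reference, whereas your disjoint-root-set argument is self-contained and arguably cleaner. One small slip: the direction in which you ``assume $f$ is complete'' is the \emph{only if} direction of the stated equivalence, not the ``if'' direction.
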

\begin{proof}
First, assume that $f$ is complete, and put $$Q_f(t)=\prod_{i\in
((1/e)\Z)/\Z}(t-\xi^{i\cdot e})^{f(i)}$$ for some primitive $e$-th
root of unity $\xi$ in $\Q^a$. Then $Q_f(t)$ is a product of
$F$-tame cyclotomic polynomials, because $f$ is complete and $e$
is prime to $p'$. There is a unique ring morphism
$$\widetilde{\rho}:\Z[\xi][t]\rightarrow F[t]$$ that maps $\xi$ to $\zeta$ and $t$
to $t$.
 We clearly have $\widetilde{\rho}(Q_f(t))=P_{f,\zeta}(T)$. Since $Q_f(t)$ belongs to $\Z[t]$, it follows
  that $\rho(Q_f(t))=P_{f,\zeta}(T)$ so that
$P_{f,\zeta}(t)$ does not depend on $\zeta$. Uniqueness of
$Q_f(t)$ follows from \cite[5.10]{HaNi}.

Conversely, if $P_{f,\zeta}(t)$ is the image under $\rho$ of a
product $Q(t)$ of $F$-tame cyclotomic polynomials, then it is
easily seen that $f$ is complete.
\end{proof}
\section{Toric and abelian multiplicity}\label{sec-tormult}
\subsection{Galois action on N\'eron models}\label{subsec-neron}
Let $A$ be a tamely ramified abelian $K$-variety of dimension $g$,
and let $K'$ be a finite extension of $K$ in $K^t$ such that
$A'=A\times_K K'$ has semi-abelian reduction. We denote by $R'$
the integral closure of $R$ in $K'$, and by $\mathfrak{m}'$ the
maximal ideal of $R'$. We put $d=[K':K]$.

We denote by $\mu$ the Galois group  $G(K'/K)$, and we let $\mu$
act on $K'$ from the left. The action of $\zeta\in \mu$ on
$\mathfrak{m}'/(\mathfrak{m}')^2$ is multiplication by
$\iota(\zeta)$, for some element $\iota(\zeta)$ in the group
$\mu_d(k)$ of $d$-th roots of unity in $k$, and the map
$$\mu\to \mu_d(k):\zeta\mapsto \iota(\zeta)$$ is an isomorphism.

We denote by $\mathcal{A}$ and $\mathcal{A}'$ the N\'eron models
of $A$, resp. $A'$. By the universal property of the N\'eron
model, there exists a unique morphism of $R'$-group schemes
$$h:\mathcal{A}\times_R R'\to \mathcal{A}'$$
that extends the canonical isomorphism between the generic fibers.
It induces an injective morphism of free rank $g$ $R'$-modules
$$\Lie(h):\Lie(\mathcal{A}\times_R R')\to \Lie(\mathcal{A}').$$

\begin{definition}[Chai \cite{chai}]
 The {\em base change conductor} $c(A)$ of $A$
is $[K':K]^{-1}$ times the length of the cokernel of $\Lie(h)$.
\end{definition}

The definition does not require that $A$ is tamely ramified. The
base change conductor is a positive rational number, independent
of the choice of $K'$. It vanishes if and only if $A$ has
semi-abelian reduction.

The right $\mu$-action on $A'$ extends uniquely to a right
$\mu$-action on $\mathcal{A}'$ such that the structural morphism
$$\mathcal{A}'\to\Spec R'$$ is $\mu$-equivariant. We denote by
\begin{equation}\label{eq-cheval}
0\rightarrow T\rightarrow (\mathcal{A}'_s)^o\rightarrow
B\rightarrow 0\end{equation} the Chevalley decomposition of
$(\mathcal{A}'_s)^o$, with $T$ a $k$-torus and $B$ an abelian
$k$-variety. There exist unique right $\mu$-actions on $T$, resp.
$B$, such that (\ref{eq-cheval}) is $\mu$-equivariant. The right
$\mu$-action on $B$ induces a left $\mu$-action on the dual
abelian variety $B^{\vee}$.

\begin{lemma}\label{lemma-exact}
(1) The complex
\begin{equation}\label{eq-invar}
0\rightarrow T^{\mu}\rightarrow
((\mathcal{A}'_s)^o)^{\mu}\rightarrow B^{\mu},
\end{equation}
obtained from (\ref{eq-cheval}) by taking $\mu$-invariants, is an
exact complex of smooth group schemes over $k$.

 Taking identity components, we get a complex
\begin{equation}\label{eq-invar-id}
0\rightarrow (T^{\mu})^o\rightarrow
((\mathcal{A}'_s)^\mu)^{o}\rightarrow (B^{\mu})^o\rightarrow 0
\end{equation}
of smooth group schemes over $k$ that is exact at the left and at
the right. The quotient
$$B'=((\mathcal{A}'_s)^\mu)^{o}/(T^{\mu})^o$$ is an abelian
$k$-variety, and the natural morphism $f:B'\rightarrow
(B^{\mu})^o$ is a separable isogeny.

(2) If we denote by $h$ the unique morphism
$$h:\mathcal{A}\times_R R'\rightarrow \mathcal{A}'$$ extending the
natural isomorphism between the generic fibers, then the
$k$-morphism $h_s:\mathcal{A}_s\rightarrow \mathcal{A}'_s$ factors
through a morphism
$$g:\mathcal{A}_s\rightarrow (\mathcal{A}'_s)^\mu.$$
The morphism $g$ is smooth and surjective, and its kernel is a
connected unipotent smooth algebraic $k$-group. The identity
component $((\mathcal{A}'_s)^\mu)^o$ is semi-abelian.
\end{lemma}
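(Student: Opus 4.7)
For part (1), the plan is to apply the functor of $\mu$-invariants to the short exact sequence (\ref{eq-cheval}) and analyse the resulting complex. Left exactness of $(-)^\mu$ on group schemes gives exactness of (\ref{eq-invar}) at $T^\mu$ and at $((\mathcal{A}'_s)^o)^\mu$. Because $|\mu|=d$ is prime to $p$, the standard theorem on fixed points of tame actions on smooth schemes (cf.\ \cite{edix}, or SGA3) guarantees that the fixed loci $T^\mu$, $((\mathcal{A}'_s)^o)^\mu$ and $B^\mu$ are smooth over $k$, so (\ref{eq-invar}) is indeed a complex of smooth $k$-group schemes.

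Next I would pass to identity components. The closed immersion $(T^\mu)^o \hookrightarrow ((\mathcal{A}'_s)^\mu)^o$ makes (\ref{eq-invar-id}) exact on the left, and the composition with the map to $(B^\mu)^o$ is zero. For surjectivity on the right, I would run a dimension count using the tame-action identity $\dim G^\mu = \dim(\Lie G)^\mu$ applied to $(\mathcal{A}'_s)^o$, $T$, and $B$: since $k[\mu]$ is semisimple, $\mu$-invariants are exact on $k$-vector spaces, and from the exact sequence of Lie algebras associated to (\ref{eq-cheval}) one obtains $\dim((\mathcal{A}'_s)^o)^\mu = \dim T^\mu + \dim B^\mu$. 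Combined with exactness of (\ref{eq-invar}) this forces the image of $((\mathcal{A}'_s)^\mu)^o$ in $B^\mu$ to be $(B^\mu)^o$. To identify $B'$, I would observe that the kernel of $f\colon B' \to (B^\mu)^o$ is $(T^\mu \cap ((\mathcal{A}'_s)^\mu)^o)/(T^\mu)^o$, a subgroup of the component group $\pi_0(T^\mu)$; tameness of the $\mu$-action on the torus $T$ makes $T^\mu$ diagonalizable with $p'$-torsion component group, so $\ker f$ is étale. Hence $f$ is a separable isogeny and $B'$ is proper and smooth, i.e.\ an abelian variety.

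For part (2), the morphism $h$ is $\mu$-equivariant by the uniqueness clause in the N\'eron mapping property (the generic-fibre isomorphism is $\mu$-equivariant tautologically, and any two extensions agree), so $h_s$ factors as claimed through $g\colon \mathcal{A}_s \to (\mathcal{A}'_s)^\mu$. The semi-abelian statement is immediate from part (1): $((\mathcal{A}'_s)^\mu)^o$ is an extension of the abelian variety $B'$ by the torus $(T^\mu)^o$. To pin down the structural properties of $g$ I would invoke Edixhoven's construction \cite{edix} of the tame N\'eron model: $\mathcal{A}$ is obtained from a smooth model of the Weil restriction $\prod_{R'/R}\mathcal{A}'$ by the Galois descent procedure, and this description realises $\mathcal{A}_s \to \mathcal{A}'_s$ on special fibres as a composition of a smooth surjective morphism onto $(\mathcal{A}'_s)^\mu$ with connected unipotent kernel (the kernel is the unipotent radical appearing in Edixhoven's filtration, whose Lie algebra is the direct sum of the non-trivial $\mu$-weight spaces in $\Lie(\mathcal{A}'_s)$). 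Smoothness of $g$ can be checked on Lie algebras: $\Lie(g)$ is the projection of $\Lie(\mathcal{A}_s)$, identified by Edixhoven with a $\mu$-stable $R'$-sublattice, onto its $\mu$-invariants.

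The main obstacle is the analysis in part (2): naively, $g$ need not be smooth nor surjective, and the identification of $\ker(g)$ as a connected unipotent smooth group requires the fine control over the behaviour of $\mathcal{A}$ under tame base change supplied by Edixhoven's theorem. The rest of the proof is essentially bookkeeping with dimensions of fixed loci, exactness of $\mu$-invariants in characteristic prime to $|\mu|$, and the standard fact that a closed connected subgroup scheme of an abelian variety is an abelian subvariety.
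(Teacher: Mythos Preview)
Your argument is correct, and it runs parallel to the paper's proof but diverges at three points.  For surjectivity of $((\mathcal{A}'_s)^\mu)^o\to (B^\mu)^o$, the paper does not use a Lie-algebra dimension count: instead it shows that the map $\alpha^\mu:((\mathcal{A}'_s)^o)^\mu\to B^\mu$ is \emph{smooth} (this is Edixhoven's Lemma 3.5, applied to the smooth $\mu$-equivariant morphism $\alpha$), and then invokes the general fact from SGA3 that a flat morphism between connected smooth group schemes is surjective.  For the separability of $f$, the paper again exploits smoothness of $\alpha^\mu$: since $\ker(\gamma)$ is smooth over $k$, so is $\ker(f)=\ker(\gamma)/(T^\mu)^o$, whence $f$ is \'etale.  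Your route via the diagonalizability of $T^\mu$ and the fact that $\pi_0(T^\mu)$ has order prime to $p$ (because the torsion in the coinvariants $X(T)_\mu$ is annihilated by $d=|\mu|$) is a valid alternative and arguably more hands-on.  Finally, for the semi-abelian claim in part (2), the paper argues that $((\mathcal{A}'_s)^\mu)^o$ is a connected smooth closed subgroup of the semi-abelian variety $(\mathcal{A}'_s)^o$ and cites a separate lemma that such subgroups are semi-abelian; your observation that part (1) already exhibits $((\mathcal{A}'_s)^\mu)^o$ as an extension of the abelian variety $B'$ by the torus $(T^\mu)^o$ is more direct.  Both proofs defer the hard content of part (2) (smoothness, surjectivity, and the unipotent connected kernel of $g$) to Edixhoven's paper.
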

\begin{proof}
(1) It follows from \cite[3.4]{edix} that the group schemes in
\eqref{eq-invar} are smooth over $k$. Exactness of
(\ref{eq-invar}) is clear. The morphism
$$\alpha:(\mathcal{A}'_s)^o\rightarrow B$$  is smooth, since $T$
is smooth over $k$ \cite[VI$_\mathrm{B}$.9.2]{sga3.1}. It follows
from \cite[3.5]{edix} that
$$\alpha^{\mu}:((\mathcal{A}'_s)^o)^{\mu}\rightarrow B^{\mu}$$ is
smooth, as well.

Taking identity components in (\ref{eq-invar}), we get a  complex
$$\begin{CD}
(T^{\mu})^o@>\beta>>
((\mathcal{A}'_s)^\mu)^{o}@>\gamma=(\alpha^{\mu})^o>> (B^{\mu})^o
\end{CD}$$
 of smooth group schemes over $k$.
 It is obvious that $\beta$ is a monomorphism, and thus a closed immersion \cite[VI$_B$.1.4.2]{sga3.1}. Surjectivity of $\gamma$
follows from \cite[VI$_\mathrm{B}$.3.11]{sga3.1}, since $\gamma$
is smooth, and thus flat. We put
$$B'=((\mathcal{A}'_s)^\mu)^{o}/(T^{\mu})^o$$
This is a connected smooth algebraic $k$-group, by
\cite[VI$_\mathrm{A}$.3.2 and VI$_\mathrm{B}$.9.2]{sga3.1}.

Consider the natural morphism $f:B'\rightarrow (B^{\mu})^o$. It is
surjective, because $\gamma$ is surjective. The dimension of $B'$
is equal to $$\dim\,(\mathcal{A}'_s)^\mu - \dim\,T^{\mu},$$ which
is at most $\dim\,B^{\mu}$ by exactness of \eqref{eq-invar}.
Surjectivity of $f$ then implies that $B'$ and $B^{\mu}$ must have
the same dimension, so that $f$ has finite kernel. Thus $f$ is an
isogeny and $B'$ is an abelian variety. The kernel of $f$ is
canonically isomorphic to
$$\ker(\gamma)/(T^{\mu})^o.$$
Since $\gamma$ is smooth, we know that $\ker(\gamma)$ is smooth
over $k$, so that $\ker(f)$ is smooth over $k$, by
\cite[VI$_\mathrm{B}$.9.2]{sga3.1}. Hence, $f$ is a separable
isogeny.

%
%
%

(2)  Since $h$ is $\mu$-equivariant, and $\mu$ acts trivially on
the special fiber $\mathcal{A}_s$ of $\mathcal{A}\times_R R'$, the
morphism $h_s$ factors through a morphism
$g:\mathcal{A}_s\rightarrow (\mathcal{A}'_s)^{\mu}$. By
\cite[5.3]{edix}, the morphism $g$ is smooth and surjective, and
its kernel is a connected unipotent smooth algebraic $k$-group. By
\cite[3.4]{edix}, $((\mathcal{A}'_s)^{\mu})^o$ is a connected
smooth closed $k$-subgroup scheme of the semi-abelian $k$-group
scheme $(\mathcal{A}'_s)^o$, so that $((\mathcal{A}'_s)^{\mu})^o$
is semi-abelian by \cite[5.2]{HaNi}.
\end{proof}

\subsection{Multiplicity functions}\label{subsec-multipl}
Fix an element $e\in \N'$. For every finite dimensional $k$-vector
space $V$ with a right $\mu_e(k)$-action
$$*: V\times \mu_e(k) \rightarrow V:(v,\zeta)\mapsto v*\zeta$$
 and for
every integer $i$ in $\{0,\ldots,e-1\}$, we denote by $V[i]$ the
maximal subspace of $V$ such that
$$v*\zeta=\zeta^i\cdot v$$ for all $\zeta\in \mu_e(k)$
and all $v\in V[i]$. We define the {\em multiplicity function}
$$m_{V,\mu_e(k)}:\Q/\Z\rightarrow \N$$ by
$$\left\{\begin{array}{rcll}m_{V,\mu_e(k)}(i/e)&=&\mathrm{dim}(V[i])&\mbox{ for }i\in
\{0,\ldots,e-1\} \\ m_{V,\mu_e(k)}(x)&=&0&\mbox{ if }x\notin
((1/e)\Z)/\Z\end{array}\right.$$
 Note that $m_{V,\mu_e(k)}$ determines the
$k[\mu_e(k)]$-module $V$ up to isomorphism, since the order of
$\mu_e(k)$ is invertible in $k$.

In an analogous way, we define the multiplicity function
$m_{\mu_e(k),W}$ for a finite dimensional $k$-vector space $W$
 with left $\mu_e(k)$-action.  The inverse of the right $\mu_e(k)$-action
on $V$ is the left action
$$\mu_e(k)\times V \rightarrow V:(\zeta,v)\mapsto v*\zeta^{-1}.$$
 Its multiplicity function $m_{\mu_e(k),V}$ is equal to the reflection
$(m_{V,\mu_e(k)})^{\refl}$ of the multiplicity function
$m_{V,\mu_e(k)}$.

Let $A$ be a tamely ramified abelian $K$-variety. We adopt the
notations of Section \ref{subsec-neron}. In the set-up of
\eqref{eq-cheval}, the group $\mu\cong \mu_d(k)$ acts on the
$k$-vector spaces $\Lie(T)$, $\Lie(\mathcal{A}_s')$ and $\Lie(B)$
from the right, and on $\Lie(B^{\vee})$ from the left (via the
dual action of $\mu$ on $B^{\vee}$). Hence, we can state the
following definitions.
\begin{definition}\label{def-mult}
We define the toric multiplicity function $m_A^{\tor}$ of $A$ by
$$m_A^{\tor}=m_{\Lie(T),\mu}.$$

We define the abelian multiplicity function $m_{A}^{\ab}$ of $A$
by
$$m_A^{\ab}=m_{\Lie(B),\mu}.$$

We define the dual abelian multiplicity function
$\dualm_{A}^{\ab}$ of $A$ by
$$\dualm_{A}^{\ab}=m_{\mu,\Lie(B^{\vee})}.$$

Finally, we define the multiplicity function $m_A$ of $A$ by
$$m_A=m_A^{\tor}+m_A^{\ab}=m_{\Lie(\mathcal{A}'_s),\mu}.$$
\end{definition}
Using \cite[IX.3.9]{sga7a}, it is easily checked that these
definitions only depend on $A$, and not on the choice of $K'$.

\begin{prop}\label{prop-dualeq}
For every tamely ramified abelian $K$-variety $A$, we have
$$\dualm^{\ab}_A=(m_{A^{\vee}}^{\ab})^{\refl}.$$
\end{prop}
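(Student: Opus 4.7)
The plan is to apply Grothendieck's duality for N\'eron models of abelian varieties with semi-abelian reduction, and then to compare the resulting $\mu$-equivariances on the abelian parts. Since $A$ has semi-abelian reduction over $R'$, the same holds for $A^\vee$, and the identity components of the N\'eron models of $A'$ and of $A^\vee\times_K K'$ are both semi-abelian $R'$-schemes. Grothendieck's duality (see, e.g., \cite{chai} and the references therein) yields a canonical isomorphism between the identity component of the N\'eron model of $A^\vee\times_K K'$ and the dual semi-abelian scheme of $\mathcal{A}'^o$. Specializing to the closed fiber and extracting the abelian part of the Chevalley decomposition gives a canonical isomorphism $B_{A^\vee}\cong B^\vee$ of abelian $k$-varieties.

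Next, I would analyze the $\mu$-equivariance of this isomorphism. The dualization functor on abelian varieties is contravariant: dualizing the right $\mu$-action $r_\sigma$ on $B$ yields a left action $\ell_\sigma=r_\sigma^\vee$ on $B^\vee$, which is precisely the left action used in Definition \ref{def-mult} for $\dualm^{\ab}_A$. On the other hand, $B_{A^\vee}$ inherits a right $\mu$-action from the Galois action on the N\'eron model of $A^\vee\times_K K'$. Under the Grothendieck duality isomorphism, contravariance forces this right action on $B_{A^\vee}$ to correspond to the right action $\sigma\mapsto\ell_{\sigma^{-1}}$ on $B^\vee$ obtained by inverting the natural left action.

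Passing to Lie algebras and invoking the identity $(m_{V,\mu_e(k)})^{\refl}=m_{\mu_e(k),V}$ (valid when the left action is the inverse of the right action, as explained in Section \ref{subsec-multipl}), the inversion $\sigma\mapsto\sigma^{-1}$ introduced by contravariance translates precisely into the reflection, yielding
$$(m^{\ab}_{A^\vee})^{\refl}=(m_{\Lie(B_{A^\vee}),\mu})^{\refl}=m_{\mu,\Lie(B^\vee)}=\dualm^{\ab}_A,$$
as required.

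The main obstacle is the careful verification of the $\mu$-equivariance in the second step — namely, that the contravariance of duality introduces exactly one inversion of the Galois element, giving the single reflection that appears in the statement (and not none or two). This requires tracing through the construction of the Grothendieck duality isomorphism and its compatibility with the base-change Galois action on $\Spec R'$. A possibly cleaner alternative would be to identify $\Lie(B^\vee)\cong H^1(B,\mathcal{O}_B)$ canonically and analyze the $\mu$-actions on both sides via functoriality of coherent cohomology on $B$.
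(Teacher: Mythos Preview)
Your strategy is precisely the paper's: identify the abelian quotient $C$ of the special fiber of the N\'eron model of $A^\vee\times_K K'$ with $B^\vee$ (this is \cite[IX.5.4]{sga7a}) and then verify that the right $\mu$-action on $C$ corresponds to the inverse of the dual of the right $\mu$-action on $B$. The paper fills in the step you flag as the main obstacle by tracing the Poincar\'e biextension of $(A,A^\vee)$ through the construction of \cite[IX.5.4]{sga7a}: since this biextension is defined over $K$ and only then base-changed to $K'$, the induced Poincar\'e biextension $\mathscr{Q}$ of $(B,C)$ by $\mathbb{G}_{m,k}$ is invariant under simultaneous pullback by $(r_\zeta,r_\zeta)$, and interpreting $\mathscr{Q}$ as an isomorphism $B\cong C^\vee$ translates this invariance into exactly the equivariance you need.
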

\begin{proof}
We adopt the notations of Section \ref{subsec-neron}. We set
$(A^{\vee})'=A^{\vee}\times_K K'$ and we denote its N\'eron model
by $(\mathcal{A}^{\vee})'$. The identity component of
$(\mathcal{A}^{\vee})'_s$ is a semi-abelian $k$-variety
\cite[IX.2.2.7]{sga7a}. We denote by $C$ its abelian part.

As explained in Section \ref{subsec-neron}, the left Galois action
of $\mu$ on $K'$ induces a right action of $\mu$ on $C$. The
canonical divisorial correspondence on $A\times_K A^{\vee}$
induces a divisorial correspondence on $B\times_k C$ that
identifies $C$ with the dual abelian variety of $B$
\cite[IX.5.4]{sga7a}. It suffices to show that the right
$\mu$-action on $C$ is the inverse of the dual of the right
$\mu$-action on $B$. To this end, we take a closer look at the
construction of the divisorial correspondence on $B\times_k C$.
Here we need the language of biextensions \cite[VII and
VIII]{sga7a}. We note that the following proof does not use the
assumption that $A$ is tamely ramified and that $K'$ is a tame
extension of $K$.

The canonical divisorial correspondence on $A\times_K A^{\vee}$
can be interpreted as a {\em Poincar\'e biextension} $\mathscr{P}$
of $(A,A^{\vee})$ by $\mathbb{G}_{m,K}$ \cite[VII.2.9.5]{sga7a},
which is defined up to isomorphism. It induces a biextension
$\mathscr{P}'$ of $(A',(A^{\vee})')$ by $\mathbb{G}_{m,K'}$ by
base change. By \cite[VIII.7.1]{sga7a}, the biextension
$\mathscr{P}'$ extends uniquely to a biextension of
$((\mathcal{A}')^o,((\mathcal{A}^{\vee})')^o)$ by
$\mathbb{G}_{m,R'}$, which restricts to a biextension
$\mathscr{P}'_s$ of
$((\mathcal{A}')^o_s,((\mathcal{A}^{\vee})')^o_s)$ by
$\mathbb{G}_{m,k}$. By \cite[VIII.4.8]{sga7a}, $\mathscr{P}'_s$
induces a biextension $\mathscr{Q}$ of $(B,C)$ by
$\mathbb{G}_{m,k}$ that is characterized (up to isomorphism) by
the fact that its pullback to
$((\mathcal{A}')^o_s,((\mathcal{A}^{\vee})')^o_s)$ is isomorphic
to $\mathscr{P}'_s$. The theorem in \cite[IX.5.4]{sga7a} asserts
that $\mathscr{Q}$ is a Poincar\'e biextension.

 For every element $\zeta$ of $\mu$, we denote by $r_{\zeta}$  the right
 multiplication by $\zeta$ on $B$ and $C$.
Since $\mathscr{P}'$ is obtained from the biextension
$\mathscr{P}$ over $K$ by base change to $K'$, it follows easily
from the construction that the pullback of the biextension
$\mathscr{Q}$ through the $k$-morphisms
\begin{eqnarray*}r_\zeta&:&B\to B\\ r_\zeta&:&C\to C\end{eqnarray*} is
isomorphic to $\mathscr{Q}$.   Interpreting $\mathscr{Q}$ as an
 isomorphism $$i:B\to C^{\vee}$$ in the way of \cite[VIII.3.2.2]{sga7a}, this means
 that the diagram
 $$\begin{CD}
B@>i>> C^{\vee}
\\ @V r_\zeta VV @AA(r_\zeta)^{\vee}A
\\ B@>i>> C^{\vee}
 \end{CD}$$ commutes, which is what we wanted to show.
\end{proof}



In the following proposition, we see how the multiplicity
functions of a tamely ramified abelian $K$-variety $A$ are related
to Edixhoven's jumps and Chai's elementary divisors of $A$. These
jumps and elementary divisors are rational numbers in $[0,1[$ that
measure the behaviour of the N\'eron model of $A$ under tame
ramification of the base field $K$. For the definition of
Edixhoven's jumps, we refer to \cite[5.4.5]{edix}. The terminology
we use is the one from \cite[4.12]{HaNi}; in particular, the
multiplicity of a jump is defined there. For Chai's elementary
divisors, we refer to \cite[2.4]{chai}. By definition, the base
change conductor $c(A)$ of $A$ is equal to the sum of the
elementary divisors.

\begin{prop}\label{prop-jumps} Let $A$ be a tamely ramified
abelian $K$-variety. The functions $m_A$, $m_A^{\tor}$,
$m_A^{\ab}$ and $\dualm^{\ab}_A$
  are supported on
$$((1/e)\Z)/\Z,$$ with $e$ the degree of the minimal
extension of $K$ where $A$ acquires semi-abelian reduction.

If we identify $[0,1[\,\cap \Q$ with $\Q/\Z$  via the bijection
$$[0,1[\,\cap \Q\rightarrow \Q/\Z:x\mapsto x\ \mathrm{mod}\ \Z$$ then for every $x\in [0,1[\,\cap
\Q$,
 the value $m_A(x)$ is
equal to the multiplicity of $x$ as a jump in Edixhoven's
filtration for $A$. In particular, the support of $m_A$ is the set
of jumps in Edixhoven's filtration. The value $m_A(x)$ is also
equal to the number of Chai's elementary divisors of $A$ that are
equal to $x$, and the base change conductor $c(A)$ of $A$ is given
by
$$c(A)=\sum_{x\in [0,1[\,\cap \Q}(m_A(x)\cdot x) .$$
\end{prop}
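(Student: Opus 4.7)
The plan is to reduce each assertion to the known descriptions of Edixhoven's filtration (see \cite{edix} and \cite[4.12]{HaNi}) and Chai's elementary divisors (see \cite[2.4]{chai}). By the remark following Definition~\ref{def-mult}, the multiplicity functions do not depend on the choice of $K'$, so I am free to take $K'$ to be the minimal finite tame extension of $K$ over which $A$ has semi-abelian reduction; this extension has degree $e$ and identifies $\mu$ with $\mu_e(k)$. Each of the $k$-vector spaces $\Lie(T)$, $\Lie(B)$, $\Lie(B^{\vee})$ and $\Lie(\mathcal{A}'_s)$ then decomposes under the $\mu$-action into character spaces indexed by $\{0,\ldots,e-1\}$, so the multiplicity functions of Definition~\ref{def-mult} have support in $((1/e)\Z)/\Z$ by construction.

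Next, I would identify $m_A$ with the multiplicity function for Edixhoven's jumps by matching definitions. Edixhoven's filtration on $\Lie(\mathcal{A}'_s)$ is defined precisely through the character-space decomposition under the $\mu$-action: the jump at $x=i/e$ records the subspace on which $\mu$ acts via $\iota^i$, and its multiplicity (in the sense of \cite[4.12]{HaNi}) is the dimension of that subspace. Unwinding Definition~\ref{def-mult}, this dimension is exactly $m_A(i/e)$, which gives both the identification and the statement that $\Supp(m_A)$ is the set of jumps.

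For the elementary divisors and the formula for $c(A)$, I would appeal to Edixhoven's explicit description of $\Lie(h)$. Fix a uniformizer $\pi$ of $R'$ on which $\mu$ acts via $\iota$. Edixhoven shows that, after using $\pi$ to trivialize the character spaces, the image of $\Lie(h):\Lie(\mathcal{A}\otimes_R R')\to\Lie(\mathcal{A}')$ is identified with $\bigoplus_{i=0}^{e-1}\pi^{i}\Lie(\mathcal{A}'_s)[i]$, so that
$$\mathrm{coker}\,\Lie(h)\ \cong\ \bigoplus_{i=0}^{e-1}\bigl(R'/\pi^{i}R'\bigr)^{m_A(i/e)}.$$
This is the Smith normal form; by Chai's definition the elementary divisors are therefore the values $i/e$, each occurring with multiplicity $m_A(i/e)$. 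Computing the length of the cokernel as $\sum_i i\cdot m_A(i/e)$ and dividing by $[K':K]=e$ yields
$$c(A)=\sum_{x\in[0,1[\,\cap\Q} x\cdot m_A(x).$$

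The only real obstacle is bookkeeping of conventions: Edixhoven and Chai label their invariants by rational numbers in $[0,1[$, while the character $\iota$ of $\mu$ on $\mathfrak{m}'/(\mathfrak{m}')^2$ must be used to translate these labels into characters $\iota^i$ on the eigenspaces of $\Lie(\mathcal{A}'_s)$. Once it is verified that the sign and indexing conventions line up with those of \cite[4.12]{HaNi}, \cite{edix} and \cite[2.4]{chai}, each of the four assertions is a direct unwinding of definitions, and no additional work is required for $m_A^{\tor}$, $m_A^{\ab}$ or $\dualm^{\ab}_A$ beyond the support statement proved at the outset.
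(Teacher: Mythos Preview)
Your proposal is correct and follows the same route as the paper, which simply cites \cite[5.3 and 5.4.5]{edix} and \cite[4.8, 4.13 and 4.18]{HaNi}; you have essentially unpacked the content of those references. One small imprecision worth flagging: Edixhoven's filtration is defined on $\mathcal{A}_s^o$ (the identity component of the special fiber of the N\'eron model of $A$ over $R$), not directly on $\Lie(\mathcal{A}'_s)$; the link between the jumps of that filtration and the eigenspace decomposition of $\Lie(\mathcal{A}'_s)$ under $\mu$ is precisely the content of \cite[5.3]{edix}, which your argument is implicitly invoking rather than a matter of definition.
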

\begin{proof}
See \cite[5.3 and 5.4.5]{edix} and \cite[4.8 and 4.13 and
4.18]{HaNi}.
\end{proof}

\begin{prop}\label{prop-basic}
We have the following equalities:
$$\begin{array}{lclclclcl} \|m_A\|&=&\mathrm{dim}(A),& & \|m_A^{\ab}\|&=&\|\dualm_A^{\ab}\|
&=&a_{\pot}(A),
\\[1,5ex] \|m_A^{\tor}\|&=&t_{\pot}(A),& &m^{\ab}_A(0)&=&\dualm^{\ab}_A(0)&=&a(A),
\\[1,5ex] m^{\tor}_A(0)&=& t(A). &&&&&&
\end{array}$$
Moreover, we have $$ \sum_{x\in (\Q/\Z)\setminus
\{0\}}m_A(x)=u(A).$$
\end{prop}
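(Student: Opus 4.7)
The plan is to reduce every equality to a dimension computation for a tame $\mu$-representation, and then to translate these dimensions into the invariants $t(A)$, $a(A)$, $u(A)$ of the N\'eron model by means of Lemma \ref{lemma-exact}.

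First I would observe that for every finite-dimensional $k$-vector space $V$ equipped with a (left or right) $\mu_e(k)$-action one has $\|m_{V,\mu_e(k)}\|=\dim_k V$, simply by decomposing $V$ into its character eigenspaces. Applied to $V\in\{\Lie(T),\Lie(B),\Lie(B^\vee),\Lie(\mathcal{A}'_s)\}$ this yields the four norm equalities, since $\dim T=t_{\pot}(A)$, $\dim B=\dim B^{\vee}=a_{\pot}(A)$, and $\dim\Lie(\mathcal{A}'_s)=\dim A'=\dim A$.

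Next I would evaluate the functions at $0$. For any smooth $\mu$-stable closed $k$-subgroup $H\subset G$ of a smooth $k$-group with $\mu$-action, tameness (i.e.\ $p\nmid d$) ensures that $\Lie(H^\mu)=\Lie(H)^\mu$, and from the definition $m_{V,\mu}(0)=\dim V^\mu$. By Lemma \ref{lemma-exact}(2), the morphism $g:\mathcal{A}_s\to(\mathcal{A}'_s)^\mu$ is smooth and surjective with connected unipotent kernel, so it restricts to a smooth surjection $\mathcal{A}_s^o\to((\mathcal{A}'_s)^\mu)^o$ with unipotent kernel. Such a morphism preserves both toric and abelian ranks, and combining this with the exact sequence \eqref{eq-invar-id} and the separable isogeny $f:B'\to(B^\mu)^o$ from Lemma \ref{lemma-exact}(1), one obtains
\[
t(A)=\dim (T^\mu)^o=\dim T^\mu=\dim\Lie(T)^\mu=m_A^{\tor}(0),
\]
\[
a(A)=\dim B'=\dim (B^\mu)^o=\dim\Lie(B)^\mu=m_A^{\ab}(0).
\]
For the dual abelian multiplicity at $0$ I would invoke Proposition \ref{prop-dualeq}: since reflection fixes $0$,
\[
\dualm_A^{\ab}(0)=(m_{A^{\vee}}^{\ab})^{\refl}(0)=m_{A^{\vee}}^{\ab}(0)=a(A^{\vee})=a(A),
\]
using that $a(\cdot)$ is invariant under duality of abelian varieties. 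The equality $\|\dualm_A^{\ab}\|=a_{\pot}(A)$ is already covered by the first step, because $\dim B^{\vee}=\dim B=a_{\pot}(A)$.

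The last identity is a straightforward consequence of what precedes: since $\dim\mathcal{A}_s^o=t(A)+u(A)+a(A)$,
\[
\sum_{x\in(\Q/\Z)\setminus\{0\}}m_A(x)=\|m_A\|-m_A(0)=\dim A-m_A^{\tor}(0)-m_A^{\ab}(0)=\dim A-t(A)-a(A)=u(A).
\]
The main technical point is the interaction between $\Lie$ and $\mu$-invariants together with the smoothness statements of Lemma \ref{lemma-exact}; once these are in hand, the proposition is essentially bookkeeping.
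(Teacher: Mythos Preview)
Your proposal is correct and follows essentially the same route as the paper: both arguments compute the norms directly from the eigenspace decomposition, use Lemma~\ref{lemma-exact} together with the identification $\Lie(H^{\mu})=\Lie(H)^{\mu}$ (the paper cites \cite[3.2]{edix} for this) to identify $m_A^{\tor}(0)$ and $m_A^{\ab}(0)$ with $t(A)$ and $a(A)$, invoke Proposition~\ref{prop-dualeq} and $a(A)=a(A^{\vee})$ for $\dualm_A^{\ab}(0)$, and finish with the subtraction $\dim A - t(A) - a(A) = u(A)$. The only cosmetic difference is that you make the step ``smooth surjection with connected unipotent kernel preserves toric and abelian ranks'' explicit, whereas the paper simply states the equality of ranks as a consequence of Lemma~\ref{lemma-exact}.
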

\begin{proof}
We adopt the notations of Section \ref{subsec-neron}. It follows
immediately from the definitions that
$$\begin{array}{rcccccl}
& &\|m_A\|&=&\mathrm{dim}(\Lie(\mathcal{A}'_s))&=&\mathrm{dim}(A),
\\ & &\|m_A^{\tor}\|&=&\mathrm{dim}(\Lie(T))&=&t_{\pot}(A),
\\
\|\dualm_A^{\ab}\|&=&\|m_A^{\ab}\|&=&\mathrm{dim}(\Lie(B))&=&a_{\pot}(A).
\end{array}$$
By Lemma \ref{lemma-exact}, the abelian, resp., reductive rank of
$\mathcal{A}_s^o$ is equal to the abelian, resp., reductive rank
of the semi-abelian $k$-variety $((\mathcal{A}'_s)^{\mu})^o$. In
the notation of Lemma \ref{lemma-exact}, the Chevalley
decomposition of $((\mathcal{A}'_s)^{\mu})^o$ is given by
$$0\rightarrow (T^{\mu})^o\rightarrow
((\mathcal{A}'_s)^{\mu})^o\rightarrow B'\rightarrow 0$$ and there
exists a  separable isogeny $f:B'\rightarrow (B^{\mu})^o$. By
\cite[3.2]{edix}, the natural morphisms $$\begin{array}{c}
\Lie(T^{\mu})\rightarrow \Lie(T)^{\mu}=\Lie(T)[0]
\\ \Lie(B^{\mu})\rightarrow \Lie(B)^{\mu}=\Lie(B)[0]
\end{array}$$
are isomorphisms. Since $\Lie(f)$ is also an isomorphism, we find
\begin{eqnarray*}
m^{\tor}_A(0)&=& t(A),
\\ m^{\ab}_A(0)&=&a(A).
\end{eqnarray*}
 It follows that
\begin{eqnarray*}
\sum_{x\in (\Q/\Z)\setminus
\{0\}}m_A(x)&=&\|m_A\|-m_A^{\tor}(0)-m_A^{\ab}(0)
\\ &=&\mathrm{dim}(A)-t(A)-a(A)
\\&=&u(A).
\end{eqnarray*}
By Proposition \ref{prop-dualeq}, we have
$$\dualm^{\ab}_A(0)=m^{\ab}_{A^{\vee}}(0)$$ and we've just seen
that this value is equal to the abelian rank $a(A^{\vee})$ of
$A^{\vee}$. But the abelian ranks of $A$ and $A^{\vee}$ coincide,
by \cite[2.2.7]{sga7a}, so that
$$\dualm^{\ab}_A(0)=a(A).$$
\end{proof}

\begin{lemma}\label{lemm-prod}
If $A_1$ and $A_2$ are tamely ramified abelian $K$-varieties, then
\begin{eqnarray*}
m_{A_1\times_K A_2}^{\tor}&=&m_{A_1}^{\tor}+m_{A_2}^{\tor},
\\ m_{A_1\times_K A_2}^{\ab}&=&m_{A_1}^{\ab}+m_{A_2}^{\ab},
\\ \dualm_{A_1\times_K
A_2}^{\ab}&=&\dualm_{A_1}^{\ab}+\dualm_{A_2}^{\ab}.
\end{eqnarray*}
\end{lemma}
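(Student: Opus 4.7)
The plan is to reduce the statement to the evident additivity of multiplicity functions under direct sums of $k[\mu]$-modules, by checking that each of the constructions used to define $m_A^{\tor}$, $m_A^{\ab}$ and $\dualm_A^{\ab}$ commutes with finite products.

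First I would pick a finite tame extension $K'/K$ in $K^t$ such that both $A_1\times_K K'$ and $A_2\times_K K'$ acquire semi-abelian reduction; then $A_1\times_K A_2$ also acquires semi-abelian reduction over $K'$, so $K'$ may be used to compute all three multiplicity functions simultaneously (legitimate by \cite[IX.3.9]{sga7a}, as noted right after Definition \ref{def-mult}). Set $\mu=G(K'/K)$ and write $\mathcal{A}'_i$ for the N\'eron model of $A_i\times_K K'$, and $\mathcal{A}'$ for that of $(A_1\times_K A_2)\times_K K'$. Since the N\'eron model of a product of abelian varieties is the product of the N\'eron models, we have a $\mu$-equivariant isomorphism
\[
\mathcal{A}'\;\cong\;\mathcal{A}'_1\times_{R'}\mathcal{A}'_2,
\]
and hence $(\mathcal{A}'_s)^o\cong(\mathcal{A}'_{1,s})^o\times_k(\mathcal{A}'_{2,s})^o$ as semi-abelian $k$-groups with right $\mu$-action.

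Next I would observe that the Chevalley decomposition is compatible with products: the toric part of a product of semi-abelian $k$-varieties is the product of the toric parts, and similarly for the abelian parts. Applying this to (\ref{eq-cheval}) gives $\mu$-equivariant identifications $T\cong T_1\times_k T_2$ and $B\cong B_1\times_k B_2$, where $T_i$ and $B_i$ are the toric and abelian parts of $(\mathcal{A}'_{i,s})^o$. Taking Lie algebras yields isomorphisms of $k[\mu]$-modules
\[
\Lie(T)\;\cong\;\Lie(T_1)\oplus\Lie(T_2),\qquad \Lie(B)\;\cong\;\Lie(B_1)\oplus\Lie(B_2).
\]
For the dual abelian part, duality of abelian varieties commutes with products, i.e.\ $(B_1\times_k B_2)^{\vee}\cong B_1^{\vee}\times_k B_2^{\vee}$, and this isomorphism intertwines the induced left $\mu$-actions; hence $\Lie(B^{\vee})\cong\Lie(B_1^{\vee})\oplus\Lie(B_2^{\vee})$ as left $k[\mu]$-modules.

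To conclude, I would invoke the elementary fact that for any two finite-dimensional $k$-vector spaces $V,W$ with right (resp.\ left) $\mu_e(k)$-action, one has $m_{V\oplus W,\mu_e(k)}=m_{V,\mu_e(k)}+m_{W,\mu_e(k)}$ (resp.\ for left actions), which is immediate from the isotypic decomposition into characters. Feeding the three $k[\mu]$-module isomorphisms above into Definition \ref{def-mult} then yields the three claimed identities. There is no genuine obstacle; the only points that require any care are the functoriality of the Chevalley decomposition under products and the compatibility of duality with products together with the $\mu$-actions, both of which are standard.
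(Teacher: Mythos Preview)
Your proposal is correct and follows essentially the same approach as the paper: both arguments reduce to the fact that N\'eron models, the Chevalley decomposition, and the Lie functor all commute with finite products, so that the relevant $k[\mu]$-modules split as direct sums. You spell out a few points more carefully than the paper does (choosing a common $K'$, the $\mu$-equivariance, and the compatibility of duality $(B_1\times_k B_2)^{\vee}\cong B_1^{\vee}\times_k B_2^{\vee}$ needed for $\dualm^{\ab}$), but the substance is identical.
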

\begin{proof}
If we denote by $\mathcal{A}_1$ and $\mathcal{A}_2$ the N\'eron
models of $A_1$, resp. $A_2$, then it follows immediately from the
universal property of the N\'eron model that
$\mathcal{A}_1\times_R \mathcal{A}_2$ is a N\'eron model for
$A_1\times_K A_2$. Since the Chevalley decomposition of a
connected smooth algebraic $k$-group commutes with finite fibered
products over $k$ and $\Lie(G_1\times_k G_2)$ is canonically
isomorphic to $\Lie(G_1)\oplus \Lie(G_2)$ for any pair of
algebraic $k$-groups $G_1$, $G_2$, the result follows.
\end{proof}

\begin{prop} Let $A$ be a tamely ramified abelian $K$-variety.
Let $L$ be a finite tame extension of $K$ of degree $e$, and put
$A_L=A\times_K L$. Then for each $x\in \Q/\Z$, we have
\begin{eqnarray*}
m_{A_L}^{\tor}(x)&=&\sum_{y\in \Q/\Z,\,e\cdot y=x}m_A^{\tor}(y)
\\ m_{A_L}^{\ab}(x)&=&\sum_{y\in \Q/\Z,\,e\cdot y=x}m_A^{\ab}(y)
\\ \dualm_{A_L}^{\ab}(x)&=&\sum_{y\in \Q/\Z,\,e\cdot y=x}\dualm_A^{\ab}(y)
\end{eqnarray*}
\end{prop}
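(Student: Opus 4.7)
The plan is to reduce all three formulas to a single statement about restriction of isotypic decompositions along the subgroup inclusion of Galois groups, by using a common tame Galois extension that covers both $K'$-situations.

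First I would choose a finite tame Galois extension $K''/K$ such that $A$ acquires semi-abelian reduction over $K''$ and such that $K''$ contains $L$; the existence follows from Grothendieck's (tame) semi-stable reduction theorem by enlarging any semi-stable tame extension to contain $L$. Write $d' = [K'':K]$ and $d = [K'':L]$, so $d' = ed$. The key observation is that $A_L \times_L K'' = A \times_K K'' = A''$, so the N\'eron model $\mathcal{A}''$ used to compute $m_A^\star$ from $A/K$ is the same $\mathcal{A}''$ used to compute $m_{A_L}^\star$ from $A_L/L$, with the same Chevalley decomposition $0 \to T \to (\mathcal{A}''_s)^\circ \to B \to 0$, and hence the same Lie algebras. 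The only change is which Galois group acts: $\mu_K = G(K''/K)$ of order $d'$ versus its subgroup $\mu_L = G(K''/L)$ of order $d$.

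Second, I would verify that under the isomorphisms $\iota_K : \mu_K \to \mu_{d'}(k)$ and $\iota_L : \mu_L \to \mu_d(k)$ of Section \ref{subsec-neron} (induced by the action on $\mathfrak{m}''/(\mathfrak{m}'')^2$), the inclusion $\mu_L \hookrightarrow \mu_K$ is compatible with the natural inclusion $\mu_d(k) \hookrightarrow \mu_{d'}(k)$ of $d$-th roots of unity inside $d'$-th roots of unity: both $\iota_K(\zeta)$ and $\iota_L(\zeta)$ compute the same scalar by which $\zeta \in \mu_L$ acts on $\mathfrak{m}''/(\mathfrak{m}'')^2$, and $\iota_L(\zeta)$ lands in $\mu_d(k)$ because $\zeta$ fixes a uniformizer of $L$.

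Third, I would apply the following purely linear-algebraic lemma: for any finite-dimensional $k$-vector space $V$ with right $\mu_{d'}(k)$-action, with isotypic decomposition $V = \bigoplus_{i=0}^{d'-1} V[i]$, restriction of scalars to $\mu_d(k) \subseteq \mu_{d'}(k)$ gives the isotypic decomposition
\[
V = \bigoplus_{j=0}^{d-1} V^L[j], \qquad V^L[j] = \bigoplus_{\substack{0 \le i < d' \\ i \equiv j \,(\mathrm{mod}\,d)}} V[i],
\]
because an eigenvector with character $\xi \mapsto \xi^i$ on $\mu_{d'}(k)$ restricts to one with character $\xi \mapsto \xi^{i \,\mathrm{mod}\, d}$ on $\mu_d(k)$. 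Taking dimensions, and observing that the integers $i \in \{0,\dots,d'-1\}$ with $i \equiv j \,(\mathrm{mod}\, d)$ are exactly $j, j+d, \dots, j+(e-1)d$, whose classes $i/d'$ in $\Q/\Z$ are precisely the $y$ solving $ey = j/d$ and lying in $((1/d')\Z)/\Z$ (other $y$ contribute zero to both sides since the supports are contained in $((1/d')\Z)/\Z$ by Proposition \ref{prop-jumps}), yields the claimed formulas. Apply this lemma to $V = \Lie(T)$ for the toric formula, $V = \Lie(B)$ for the abelian formula, and (after replacing the right action by its inverse, which on the level of multiplicity functions is the reflection $(\cdot)^{\refl}$ but commutes with the subgroup restriction) to $V = \Lie(B^{\vee})$ for the dual abelian formula.

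The main obstacle is a purely notational one: keeping straight the compatibility of $\iota_K$ and $\iota_L$ so that restriction of a $\mu_{d'}(k)$-character of index $i$ to $\mu_d(k)$ is the character of index $i \bmod d$, and matching the resulting reindexing with the summation condition $ey = x$ in $\Q/\Z$. Once that dictionary is in place the verification is immediate, with no further geometric input beyond the identification $A_L \times_L K'' = A \times_K K''$.
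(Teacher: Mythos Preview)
Your proposal is correct and follows essentially the same approach as the paper: choose a common tame extension over which $A$ has semi-abelian reduction and which contains $L$, then compare the isotypic decompositions for the full Galois group and its subgroup $G(K''/L)$. The paper's proof is a terse two-line version of exactly this argument, simply noting that $G(K'/L)$ is generated by $\zeta^e$ and that the result then follows from the definition of the multiplicity functions; you have spelled out the linear-algebraic bookkeeping that the paper leaves implicit.
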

\begin{proof}
We adopt the notations of Section \ref{subsec-neron}. Since the
multiplicity functions do not depend on the choice of the field
$K'$ where $A$ acquires semi-abelian reduction, we may assume that
$L$ is contained in $K'$. If $\zeta$ is a generator of
$\mu=G(K'/K)$, then the Galois group $G(K'/L)$ is generated by
$\zeta^e$. Now the result easily follows from the definition of
the multiplicity functions.
\end{proof}

\begin{prop}\label{prop-iso}
If $f:A_1\rightarrow A_2$ is an isogeny of tamely ramified abelian
$K$-varieties and if the degree $\deg(f)$ of $f$ is prime to $p$,
then
$$m_{A_1}^{\ab}=m_{A_2}^{\ab}\quad \mbox{ and }\quad  \dualm_{A_1}^{\ab}=\dualm_{A_2}^{\ab}.$$
\end{prop}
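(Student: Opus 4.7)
The plan is to extend $f$ to a morphism of N\'eron models after base change to a common field of semi-abelian reduction, and then to exploit the fact that an isogeny of degree prime to $p$ induces an isomorphism on Lie algebras of the special fibers. First I would choose a finite tame extension $K'/K$ over which both $A_1$ and $A_2$ acquire semi-abelian reduction, adopt the notation of Section \ref{subsec-neron}, and denote by $\mathcal{A}_i'$ the N\'eron model of $A_i'=A_i\times_K K'$, with Chevalley decomposition $0\to T_i\to (\mathcal{A}_i')^o_s\to B_i\to 0$. Set $n=\deg(f)$. Since $\ker(f)\subseteq A_1[n]$, there is a unique isogeny $g:A_2\to A_1$ with $g\circ f=[n]_{A_1}$ and $f\circ g=[n]_{A_2}$. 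By N\'eron's universal property applied over $R'$, both $f$ and $g$ extend to unique $\mu$-equivariant morphisms of group schemes $\tilde f:\mathcal{A}_1'\to \mathcal{A}_2'$ and $\tilde g:\mathcal{A}_2'\to \mathcal{A}_1'$, and by uniqueness the compositions $\tilde g\circ \tilde f$ and $\tilde f\circ \tilde g$ agree with multiplication by $n$ on the respective N\'eron models.

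Next I would pass to identity components of special fibers and then to Lie algebras, obtaining $\mu$-equivariant $k$-linear maps $\Lie(\tilde f^o_s):\Lie((\mathcal{A}_1')^o_s)\to \Lie((\mathcal{A}_2')^o_s)$ and $\Lie(\tilde g^o_s)$ in the reverse direction, whose compositions in either order equal multiplication by $n$. As $n$ is invertible in $k$, this forces $\Lie(\tilde f^o_s)$ to be a $\mu$-equivariant isomorphism. Since every homomorphism from a torus to an abelian variety is trivial, $\tilde f^o_s$ sends $T_1$ into $T_2$; this induces an isogeny of abelian varieties $f_B:B_1\to B_2$, and similarly $\tilde g^o_s$ induces $g_B:B_2\to B_1$. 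The identities $f_B\circ g_B=[n]_{B_2}$ and $g_B\circ f_B=[n]_{B_1}$ follow from the functoriality of the Chevalley decomposition. Repeating the Lie-algebra argument for $f_B$ shows that $\Lie(f_B):\Lie(B_1)\to \Lie(B_2)$ is a $\mu$-equivariant isomorphism, so by Definition \ref{def-mult} we conclude that $m_{A_1}^{\ab}=m_{A_2}^{\ab}$.

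For the second equality I would apply the first part to the dual isogeny $f^{\vee}:A_2^{\vee}\to A_1^{\vee}$, which has the same degree $n$ prime to $p$, obtaining $m_{A_2^{\vee}}^{\ab}=m_{A_1^{\vee}}^{\ab}$. Proposition \ref{prop-dualeq} then yields $\dualm_{A_1}^{\ab}=\dualm_{A_2}^{\ab}$ by reflection.

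The main obstacle I foresee is the passage from the semi-abelian identity component down to the abelian quotient: one must verify that $\tilde f^o_s$ really respects the toric parts so that $f_B$ is well-defined and $\mu$-equivariant, and that the multiplication-by-$n$ identities genuinely descend to the abelian quotients. Both points rest on the uniqueness of the Chevalley decomposition and on the vanishing of homomorphisms from tori to abelian varieties, and should be routine to spell out once the framework above is in place.
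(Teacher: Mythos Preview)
Your argument is correct and, for the first equality $m_{A_1}^{\ab}=m_{A_2}^{\ab}$, proceeds exactly as the paper does: construct the complementary isogeny $g$ with $g\circ f=[n]$, extend both to the N\'eron models over $R'$, pass to the abelian quotients $B_i$ via functoriality of the Chevalley decomposition, and conclude that $\Lie(f_B)$ is a $\mu$-equivariant isomorphism because multiplication by $n$ is invertible on the Lie algebra.

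The only substantive difference lies in the treatment of $\dualm^{\ab}$. The paper does not go back up to $K$ and dualize $f$; instead it dualizes the isogeny $f_B:B_1\to B_2$ of abelian $k$-varieties directly, observing (via Cartier duality of the kernel, \cite[p.\,143]{mumford-AV}) that $(f_B)^{\vee}:B_2^{\vee}\to B_1^{\vee}$ is again a separable isogeny of degree prime to $p$, hence induces a $\mu$-equivariant isomorphism on Lie algebras. Your route---apply the first part to $f^{\vee}:A_2^{\vee}\to A_1^{\vee}$ and then invoke Proposition~\ref{prop-dualeq}---is equally valid and arguably cleaner, since it reduces the second claim to the first plus an already established result; the paper's route is more self-contained but requires the Cartier-dual computation of $\deg((f_B)^{\vee})$. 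Either way the content is the same.
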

\begin{proof}
 We put $n=\deg(f)$.
 Since
$n$ is invertible in $K$, the morphism $f$ is separable, so that
$\ker(f)$ is \'etale over $k$. Thus $\ker(f)$ is a finite \'etale
$K$-group scheme of rank $n$.
 Every finite group scheme over a field is
killed by its rank.
 (See \cite[VII$_A$.8.5]{sga3.1}; in our case, the result is
elementary, because $\ker(f)$ is \'etale, so that we can reduce to
the case of a constant group by base change to an algebraic
closure of $K$. As an aside, we recall that Deligne has shown that
every {\em commutative} finite group scheme over an arbitrary base
scheme is killed by its rank \cite[p.4]{oort-tate}.) It follows
that $\ker(f)$ is contained in $_{n}(A_1)$. Hence, there exists an
isogeny $g:A_2\rightarrow A_1$ such that $g\circ f=n_{A_1}$.

Let $K'$ be a tame finite extension of $K$ such that  $A_1$ and
$A_2$ acquire semi-abelian reduction over $K'$, and denote by $R'$
the integral closure of $R$ in $K'$. We denote the N\'eron model
of $(A_i)\times_K K'$ by $\mathcal{A}'_i$, for $i=1,2$. The
morphisms $f\times_K K'$ and $g\times_K K'$ extend uniquely to
morphisms of $R'$-group schemes
$$\begin{array}{c}f':\mathcal{A}'_1\rightarrow \mathcal{A}_2'
\\g':\mathcal{A}'_2\rightarrow \mathcal{A}'_1.
\end{array}$$
For $i=1,2$, we denote by
 $B_i$ the abelian part of the semi-abelian $k$-variety $(\mathcal{A}'_i)^o_s$.
By functoriality of the Chevalley decomposition, $f'_s$ induces
 a morphism of $k$-group schemes 
$f'_B:B_1\rightarrow B_2$. Likewise, $g'_s$ induces a morphism of
$k$-group schemes 
$g'_B:B_2\rightarrow B_1$. Since $g'\circ f'$ is multiplication by
$n$, the same holds for 
 $g'_B\circ f'_B$. In
particular, $f'_B$ is an isogeny of degree prime to $p$.  Thus
$f'_B$ is a $\mu$-equivariant separable isogeny, so that
$\Lie(f'_B):\Lie(B_1)\rightarrow \Lie(B_2)$ is a $\mu$-equivariant
isomorphism, and $m_{A_1}^{\ab}=m_{A_2}^{\ab}$.

By \cite[p.\,143]{mumford-AV}, the dual morphism $(f'_B)^{\vee}$
is again an isogeny, and its kernel is the Cartier dual of the
kernel of $f'_B$. In particular, $f'_B$ and $(f'_B)^{\vee}$ have
the same degree, so that $(f'_B)^{\vee}$ is separable. Since it is
also equivariant for the left $\mu$-action on $B^{\vee}$, we find
that
 $\dualm_{A_1}^{\ab}=\dualm_{A_2}^{\ab}$.
\end{proof}
\begin{remark}
The same proof shows that $m_{A}^{\tor}$ is invariant under
isogenies of degree prime to $p$. We'll see in Corollary
\ref{cor-isogeny} that, more generally, the functions $m_A^{\tor}$
and $m_A^{\ab}+\dualm_A^{\ab}$  are invariant under {\em all}
 isogenies.
\end{remark}

\begin{cor}\label{cor-reflex} Let $A$ be a tamely ramified abelian $K$-variety.
If $k$ has characteristic zero, or $A$ is principally polarized,
then
$$m_A^{\ab}=m_{A^{\vee}}^{\ab}$$ and $$\dualm_A^{\ab}=(m_A^{\ab})^{\refl}.$$
\end{cor}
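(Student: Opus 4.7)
The plan is to reduce everything to Proposition \ref{prop-dualeq}, which already supplies the unconditional identity $\dualm_A^{\ab}=(m_{A^{\vee}}^{\ab})^{\refl}$. Once we establish the single equality $m_A^{\ab}=m_{A^{\vee}}^{\ab}$, substitution immediately yields $\dualm_A^{\ab}=(m_A^{\ab})^{\refl}$. So the entire task is to prove $m_A^{\ab}=m_{A^{\vee}}^{\ab}$ under each of the two hypotheses.

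First, suppose that $A$ is principally polarized. A principal polarization is by definition an isomorphism of abelian $K$-varieties $A\xrightarrow{\sim} A^{\vee}$. From the construction of $m_{(\cdot)}^{\ab}$ in Definition \ref{def-mult}, the function $m_{(\cdot)}^{\ab}$ is manifestly invariant under isomorphisms of tamely ramified abelian $K$-varieties (the N\'eron model, its identity component, the Chevalley decomposition, the induced right $\mu$-action on the abelian part, and the Lie algebra functor are all transported isomorphically). Hence $m_A^{\ab}=m_{A^{\vee}}^{\ab}$ follows at once.

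Next, suppose that $k$ has characteristic zero, so that the characteristic exponent is $p=1$. Every abelian variety over a field admits a polarization, i.e.\ an isogeny $\phi:A\to A^{\vee}$. Since $p=1$, the integer $\deg(\phi)$ is trivially prime to $p$, so Proposition \ref{prop-iso}, applied to $\phi$, gives $m_A^{\ab}=m_{A^{\vee}}^{\ab}$ (and incidentally also $\dualm_A^{\ab}=\dualm_{A^{\vee}}^{\ab}$). Combining either case with Proposition \ref{prop-dualeq} then yields the second asserted identity $\dualm_A^{\ab}=(m_A^{\ab})^{\refl}$.

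There is no real obstacle: the corollary is simply the combination of Propositions \ref{prop-dualeq} and \ref{prop-iso} with the existence of a polarization, using that a principal polarization is an honest isomorphism and that in characteristic zero every isogeny has degree prime to $p$. The only point worth flagging is that Proposition \ref{prop-iso} requires the isogeny hypothesis on degrees, which is why the two separate cases are needed rather than a single unified statement.
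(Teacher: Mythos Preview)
Your proposal is correct and follows essentially the same approach as the paper: the paper's proof simply cites Proposition~\ref{prop-iso} for the first equality and then invokes Proposition~\ref{prop-dualeq} for the second. You have merely spelled out why the degree hypothesis of Proposition~\ref{prop-iso} is satisfied in each case (a principal polarization has degree~$1$, and in characteristic zero every isogeny degree is vacuously prime to $p=1$), which the paper leaves implicit.
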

\begin{proof}
The first equality follows from Proposition \ref{prop-iso}.
Together with Proposition \ref{prop-dualeq}, it implies the second
equality.
\end{proof}

We will see in Theorem \ref{thm-weight} that, when $R$ is the ring
of germs of holomorphic functions at the origin of $\C$, the
equality
 $$\dualm_A^{\ab}=(m_A^{\ab})^{\refl}$$
expresses that the monodromy eigenvalues on the $(-1,0)$-component
of a certain limit mixed Hodge structure associated to $A$ are the
complex conjugates of the monodromy eigenvalues on the
$(0,-1)$-component. Corollary \ref{cor-reflex} generalizes this
Hodge symmetry.

\begin{question}\label{qu-reflex}
Is it true that
$$\dualm_A^{\ab}=(m_A^{\ab})^{\refl}$$
for every tamely ramified abelian $K$-variety $A$?
\end{question}

\section{Jumps and monodromy}\label{sec-jumpsmon}

\begin{prop}\label{prop-charpol}
Let $B$ be an abelian $k$-variety, and let $T$ be an algebraic
$k$-torus. Fix an element $e\in \N'$, and assume that $\mu_e(k)$
acts on $B$, resp. $T$ from the right. We consider the dual left
action of $\mu_e(k)$ on $B^{\vee}$. The functions
$m_1:=m_{\Lie(T),\mu_e(k)}$ and
$$m_2:=m_{\Lie(B),\mu_e(k)}+m_{\mu_e(k),\Lie(B^{\vee})}$$
are complete.

Moreover, for every prime $\ell$ invertible in $k$ and for each
generator $\zeta$ of $\mu_e(k)$, the characteristic polynomial
$P_1(t)$ of $\zeta$ on the $\ell$-adic Tate module
$$\cV_\ell T\cong\cT_\ell
T\otimes_{\Z_\ell} \Q_\ell$$  is equal to $Q_{m_1}(t)$ (in the
notation of Lemma \ref{lemm-comp}). Likewise, the characteristic
polynomial $P_2(t)$ of $\zeta$ on $\cV_\ell B$ is equal to
$Q_{m_2}(t)$.
\end{prop}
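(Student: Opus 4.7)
The plan is to prove both parts by the same template: produce a polynomial $\chi_i(t) \in \Z[t]$ that is a product of $k$-tame cyclotomic polynomials and whose images in $k[t]$ and $\Q_\ell[t]$ are, respectively, $P_{m_i,\zeta}(t)$ and $P_i(t)$. Lemma \ref{lemm-comp} will then yield completeness of $m_i$ together with the equality $Q_{m_i}(t) = P_i(t)$.

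For the torus $T$, the cocharacter lattice $X_*(T) = \mathrm{Hom}(\G_{m,k},T)$ carries a natural $\Z$-linear $\mu_e(k)$-action, and there are canonical $\mu_e(k)$-equivariant isomorphisms $\Lie(T) \cong X_*(T)\otimes_\Z k$ and $\cV_\ell T \cong X_*(T)\otimes_\Z \Q_\ell(1)$ through which both actions factor (the Tate twist carries a trivial $\mu_e(k)$-representation, since automorphisms of $T$ act only on the cocharacter factor). The characteristic polynomial $\chi_T(t) \in \Z[t]$ of $\zeta$ on $X_*(T)$ is a product of cyclotomic polynomials $\Phi_d$ with $d\mid e$, and each such $d$ is prime to $p$, so every $\Phi_d$ is $k$-tame. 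Its images in $k[t]$ and $\Q_\ell[t]$ are precisely $P_{m_1,\zeta}(t)$ and $P_1(t)$, handling this case.

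For the abelian variety $B$, the bridge between the $\ell$-adic and Lie-algebraic sides is the first de Rham cohomology $H^1_{dR}(B/k)$. Since $|\mu_e(k)| = e$ is invertible in $k$, the Hodge-filtration exact sequence
$$0 \rightarrow H^0(B,\Omega^1_{B/k}) \rightarrow H^1_{dR}(B/k) \rightarrow H^1(B,\mathcal{O}_B) \rightarrow 0$$
splits $\mu_e(k)$-equivariantly. Under the canonical identifications $H^0(B,\Omega^1_{B/k}) \cong \Lie(B)^\vee$ and $H^1(B,\mathcal{O}_B) \cong \Lie(B^\vee)$, the characteristic polynomial of $\zeta^*$ on $H^1_{dR}(B/k)$ equals $P_{m_2,\zeta}(t)$: its factor on $\Lie(B)^\vee$ has the same eigenvalues as the right action of $\zeta$ on $\Lie(B)$ (transpose does not change eigenvalues), while its factor on $H^1(B,\mathcal{O}_B) = \Lie(B^\vee)$ is, by functoriality of $H^1(-,\mathcal{O})$ and the definition of the dual isogeny $\zeta^\vee$, precisely the left $\mu_e(k)$-action on $\Lie(B^\vee)$.

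The final ingredient is the classical comparison that the characteristic polynomials of $\zeta$ on $\cV_\ell B$ and of $\zeta^*$ on $H^1_{dR}(B/k)$ arise from a common polynomial in $\Z[t]$ (again a product of $k$-tame cyclotomic polynomials): in characteristic zero this is the Betti--de Rham comparison, while in positive characteristic one deduces it from the semisimplicity of $\zeta$ on both sides (its order $e$ being invertible in $k$) together with the equality of traces of all powers of $\zeta$ on $\cV_\ell B$ and on $H^1_{dR}(B/k)$, via the Lefschetz trace formula applied to the automorphism $\zeta$; alternatively one may lift the pair $(B,\zeta)$ to an abelian scheme with $\mu_e$-action over $W(k)$, which exists by Serre--Tate theory since $\zeta$ has order prime to $p$, and then reduce to the characteristic-zero case. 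This $\ell$-adic/de Rham identification in positive characteristic is the principal technical obstacle; once in hand, the required common polynomial $\chi_2(t) \in \Z[t]$ is produced, and Lemma \ref{lemm-comp} completes the proof.
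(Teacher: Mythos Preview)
Your argument follows the paper's proof almost verbatim: reduce via Lemma~\ref{lemm-comp} to matching the image of $P_i(t)\in\Z[t]$ in $k[t]$ with $P_{m_i,\zeta}(t)$; treat the torus through the (co)character lattice and the isomorphisms $\Lie(T)\cong X_*(T)\otimes k$, $\cV_\ell T\cong X_*(T)\otimes\Q_\ell(1)$; and treat the abelian variety through the Hodge--de Rham exact sequence together with $H^0(B,\Omega^1_B)\cong\Lie(B)^\vee$ and $H^1(B,\mathcal{O}_B)\cong\Lie(B^\vee)$.

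The only point of divergence is the justification, in characteristic $p>0$, that the characteristic polynomial of $\zeta$ on $\cV_\ell B$ and of $\zeta^*$ on $H^1_{dR}(B/k)$ come from the same integer polynomial. The paper argues this by invoking crystalline cohomology as a Weil cohomology (so the integral characteristic polynomial on $H^1_{\mathrm{cris}}$ agrees with the $\ell$-adic one and reduces modulo $p$ to the de Rham one); this is the cleanest route. Of your two alternatives, the Serre--Tate lifting argument is sound---an abelian variety with an automorphism of order prime to $p$ does lift to $W(k)$, since the equivariant deformation functor is formally smooth---but your Lefschetz suggestion is shakier: there is no standard Lefschetz fixed-point formula for de Rham cohomology over a field of positive characteristic, and making such a statement precise essentially forces one back through crystalline cohomology. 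I would either drop that option or replace it with the crystalline argument the paper uses.
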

\begin{proof}
We denote by $$\rho:\Z[t]\to k[t]$$ the unique ring morphism that
maps $t$ to $t$. It is well-known that the characteristic
polynomials $P_1(t)$ and $P_2(t)$ belong to $\Z[t]$. For $P_1(t)$,
this follows from the canonical isomorphism
\begin{equation}\label{eq-tatetorus}\cV_\ell T \cong
\mathrm{Hom}_{\Z}(X(T),\Q_\ell(1)),\end{equation} where $X(T)$
denotes the character module of $T$.  For $P_2(t)$, it follows
from
 \cite[\S\,19,\,Thm.4]{mumford-AV}.

 Since $e$ is
invertible in $k$, $P_1(t)$ and $P_2(t)$ are products of $k$-tame
cyclotomic polynomials.  Thus, by Lemma \ref{lemm-comp} (and using
the notation introduced there), we only have to show the following
claims.

\vspace{5pt}

{\em Claim 1: The image of $P_1(t)$ under $\rho$ equals
$P_{m_1,\zeta}(t)$.}  Note that, by definition of the function
$m_1$, the polynomial $P_{m_1,\zeta}(t)$ is the characteristic
polynomial of the automorphism induced by $\zeta$ on $\Lie(T)$.
 Thus Claim 1 is an immediate consequence of \eqref{eq-tatetorus}
and the canonical isomorphism
$$\Lie(T)\cong \mathrm{Hom}_{\Z}(X(T),k).$$

\vspace{5pt}

 {\em Claim 2: The image of $P_2(t)$ under $\rho$ equals
$P_{m_2,\zeta}(t)$.} By definition of the function $m_2$, the
polynomial $P_{m_2, \zeta}(t)$ is
 the product of the characteristic polynomials of the automorphism induced by
$\zeta$ on $\Lie(B)$ and the dual automorphism on
$\Lie(B^{\vee})$. By \cite[5.1]{oda}, the Hodge-de Rham spectral
sequence of $B$ degenerates at $E_1$. This yields a natural short
exact sequence
\begin{equation*}\label{eq-hodgedr} 0\rightarrow
H^0(B,\Omega^1_B)\rightarrow H^1_{dR}(B)\rightarrow
H^1(B,\mathcal{O}_B)\rightarrow 0\end{equation*} where
$H^1_{dR}(B)$ is the degree one de Rham cohomology of $B$. We have
natural isomorphisms
\begin{eqnarray*}\label{eq-lieab}
 H^0(B,\Omega^1_B)&\cong&
 \Lie(B)^{\vee},
\\H^1(B,\mathcal{O}_B)&\cong& \Lie(B^{\vee}).
\end{eqnarray*}
(The first isomorphism follows from \cite[4.2.2]{neron}; the
second isomorphism from \cite[\S\,13, Cor.3]{mumford-AV}). Thus it
suffices to show that the image of $P_{2}(t)$ under $\rho$ is
equal to the characteristic polynomial of $\zeta$ on
$H^1_{dR}(B)$. As explained in the proof of \cite[5.12]{HaNi},
this can be deduced from the fact that \'etale cohomology is a
Weil cohomology, as well as de Rham cohomology (if $k$ has
characteristic zero) and crystalline cohomology (if $k$ has
characteristic $p>0$), so that the characteristic polynomials of
$\zeta$ on the respective cohomology spaces must coincide.
\end{proof}

For every $n\in \Z_{>0}$ and every $a\in \C$, we denote by
$\diag_n(a)$ the rank $n$ diagonal matrix with diagonal
$(a,\ldots,a)$, and by $\Jord_n(a)$ the rank $n$ Jordan matrix
with diagonal $(a,\ldots,a)$ and subdiagonal $(1,\ldots,1)$. For
any two complex square matrices $M$ and $N$, of rank $m$, resp.
$n$, we denote by $M\oplus N$ the rank $m+n$ square matrix
$$M\oplus N=\left(\begin{array}{cc}M & 0\\0& N\end{array}\right).$$
For every integer $q>0$, we put
$$\oplus^q M=\underbrace{M\oplus\cdots\oplus
M}_{q\mbox{\footnotesize{ times }}}.$$

\begin{definition}
For $i=1,2$, let
$$m_i:\Q/\Z\rightarrow \N$$ be a function with finite support. The Jordan matrix
$\Jord(m_1,m_2)$ associated to the couple $(m_1,m_2)$ is the
complex square matrix of rank $\|m_1\|+2\cdot \|m_2\|$ given by
\begin{eqnarray*}
\Jord(m_1,m_2)&=&\bigoplus_{x\in \Supp(m_1)}\left(
\diag_{m_1(x)}(\exp(2\pi i x))\right)
\\ & \oplus &\bigoplus_{y\in
\Supp(m_2)}\left(\oplus^{m_2(y)} \Jord_2(\exp(2\pi i
y))\right)\end{eqnarray*} where we ordered the set $\Q/\Z$ using
the bijection $\Q\cap [0,1[\to \Q/\Z$ and the usual ordering on
$[0,1[$.
\end{definition}

\begin{lemma}\label{lemm-linalg}
Let $V$ be a finite dimensional vector space over an algebraically
closed field $F$ of characteristic zero, and let $M$ be an
endomorphism of $V$. Let $d>0$ be an integer such that $M^d$ is
unipotent. Set $$W=\{v\in V\,|\,M^d(v)=v\}$$ and assume that $M^d$
acts trivially on $V/W$ and that there exists an $M$-equivariant
isomorphism between $(V/W)^{\vee}$ and an $M$-stable subspace $W'$
of $W$.

Then the endomorphism $M$ of $V$ has the following Jordan form:
for every eigenvalue $\alpha$ of $M$ on $W'$ (counted with
multiplicities), there is a Jordan block of size two with
eigenvalue $\alpha$, and for every eigenvalue $\beta$ of $M$ on
$W/W'$ (counted with multiplicities), there is a Jordan block of
size one with eigenvalue $\beta$.
\end{lemma}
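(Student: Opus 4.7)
The plan is to first use the hypotheses to constrain the Jordan type of $M$ severely, and then to count blocks at each eigenvalue via dimension counts in $W'$ and $W/W'$.

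First I would decompose $V=\bigoplus_{\lambda} V_{\lambda}$ into generalized eigenspaces of $M$; since $M^d$ is unipotent, every eigenvalue $\lambda$ is a $d$-th root of unity. On $V_{\lambda}$, write $M=\lambda\cdot \Id + N_{\lambda}$ with $N_{\lambda}$ nilpotent. Expanding $M^d-\Id$ by the binomial theorem and using that $F$ has characteristic zero, so that $d\lambda^{d-1}$ is a unit, I would factor
$$M^d-\Id = N_{\lambda}\cdot U_{\lambda}\qquad\text{on }V_{\lambda}$$
with $U_{\lambda}$ an invertible endomorphism of $V_{\lambda}$. Consequently $W\cap V_{\lambda}=\ker(N_{\lambda})$, and $(M^d-\Id)^2$ has the same kernel and image as $N_{\lambda}^2$. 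The assumption that $M^d$ acts trivially on $V/W$ is equivalent to $(M^d-\Id)^2=0$, so it forces $N_{\lambda}^2=0$ on every $V_{\lambda}$: in other words, every Jordan block of $M$ has size one or two, and the induced actions of $M$ on both $W$ and $V/W$ are semisimple.

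For each eigenvalue $\alpha$, let $s_i(\alpha)$ denote the number of Jordan blocks of $M$ of size $i\in\{1,2\}$ with eigenvalue $\alpha$. The quotient $V/W$ is spanned by the images of the top vectors of the size-two Jordan blocks, so the $\alpha$-eigenspace of $M$ on $V/W$ has dimension $s_2(\alpha)$. Any $M$-equivariant isomorphism of finite-dimensional $F$-vector spaces preserves characteristic polynomials (dualising transposes the matrix without changing its spectrum), so the $\alpha$-eigenspace of $M$ on $W'\cong (V/W)^{\vee}$ also has dimension $s_2(\alpha)$. On the other hand, $W$ has $\alpha$-eigenspace of dimension $s_1(\alpha)+s_2(\alpha)$ (one contribution from each size-one block, and one from $\ker(N_{\lambda})$ inside each size-two block), whence $\dim(W/W')_{\alpha}=s_1(\alpha)$. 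Reading off these multiplicities yields exactly the claimed Jordan form.

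The main obstacle, in my view, is the first step: verifying the factorisation $M^d-\Id=N_{\lambda}\cdot U_{\lambda}$ with $U_{\lambda}$ invertible, which is what translates the abstract hypothesis on $V/W$ into the concrete constraint $N_{\lambda}^2=0$. Once that reduction is in place, everything reduces to bookkeeping of eigenspaces for the semisimple operators $M|_{W}$ and $M|_{V/W}$.
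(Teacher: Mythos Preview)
Your proof is correct and follows essentially the same approach as the paper: both establish $(M^d-\Id)^2=0$ to bound Jordan block sizes by two, identify $W$ with the span of eigenvectors of $M$, and then count blocks via the eigenvalue multiplicities on $V/W\cong (W')^{\vee}$ and $W/W'$. The only cosmetic difference is that the paper reads off the block-size bound from the divisibility of the minimal polynomial by $(t^d-1)^2$, whereas you obtain it by the explicit factorisation $M^d-\Id=N_{\lambda}U_{\lambda}$ on each generalized eigenspace; these are equivalent observations.
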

\begin{proof}
Since $M^d$ acts trivially on $W$ and $V/W$, we know that
$(M^d-\Id)^2=0$ on $V$, so that the minimal polynomial of $M$
divides $(t^d-1)^2$ and the Jordan blocks of $M$ have size at most
two.
 The subspace $W$ of $V$ is generated by all the eigenvectors of
$M$. Thus the dimension of $V/W$ is equal to the number of Jordan
blocks of $M$ of size two, and the eigenvalues of these Jordan
blocks are precisely the eigenvalues of $M$ on $V/W$, or,
equivalently, $W'$. It follows that the eigenvalues of $M$ on $V$
corresponding to a Jordan block of size one are the eigenvalues of
$M$ on $W/W'$.
\end{proof}

\begin{theorem}\label{thm-jord}
We fix an embedding $\Q_\ell \hookrightarrow \C$. If $A$ is a
tamely ramified abelian $K$-variety, then the monodromy action of
$\sigma$ on $H^1(A\times_K K^t,\Q_\ell)$ has Jordan form
$$\Jord(m_A^{\ab}+\dualm_A^{\ab},m_A^{\tor}).$$
Moreover, the functions $m_A^{\tor}$ and
$m_A^{\ab}+\dualm_A^{\ab}$ are complete.
\end{theorem}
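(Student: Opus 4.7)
The plan is to combine Grothendieck's analysis of the monodromy action on the Tate module of an abelian variety with semi-abelian reduction, the linear-algebraic Lemma~\ref{lemm-linalg}, and the explicit determination of characteristic polynomials from Proposition~\ref{prop-charpol}. Set $V=V_\ell(A)=T_\ell(A)\otimes_{\Z_\ell}\Q_\ell$, so that $H^1(A\times_K K^t,\Q_\ell)\cong V^\vee$ as $\sigma$-modules. Since $A$ acquires semi-abelian reduction over $K'$ and $d=[K':K]$, Grothendieck's monodromy theorem \cite[IX]{sga7a} yields $(\sigma^d-\Id)^2=0$ on $V$; the fixed subspace $W:=V^{\sigma^d}$ coincides with $V_\ell((\mathcal{A}')^o_s)$ and fits into the short exact sequence
\[
0\to V_\ell(T)\to W\to V_\ell(B)\to 0
\]
induced by the Chevalley decomposition \eqref{eq-cheval}; the image of $\sigma^d-\Id$ on $V$ equals $V_\ell(T)=:W'$. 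Moreover, the monodromy pairing of SGA 7 Exp.~IX, combined with duality between $A$ and $A^\vee$, provides a $\sigma$-equivariant isomorphism $(V/W)^\vee\cong W'$.

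With the pair $(W,W')$ in place, Lemma~\ref{lemm-linalg} immediately gives the Jordan structure of $\sigma$ on $V$: each eigenvalue of $\sigma$ on $W'=V_\ell(T)$ contributes a Jordan block of size two, and each eigenvalue of $\sigma$ on $W/W'\cong V_\ell(B)$ contributes a Jordan block of size one. Then I would apply Proposition~\ref{prop-charpol} to the right $\mu$-actions on $T$ and $B$ (with the dual action on $B^\vee$) with $e=d$ and $\zeta$ the image of $\sigma$ under the isomorphism $\mu\to\mu_d(k)$. This identifies the characteristic polynomial of $\sigma$ on $V_\ell(T)$ with $Q_{m_A^{\tor}}(t)$ and that on $V_\ell(B)$ with $Q_{m_A^{\ab}+\dualm_A^{\ab}}(t)$. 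Since both are products of $k$-tame cyclotomic polynomials, Lemma~\ref{lemm-comp} shows that $m_A^{\tor}$ and $m_A^{\ab}+\dualm_A^{\ab}$ are complete, proving the second assertion of the theorem.

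Finally I transfer the Jordan decomposition from $V$ to $H^1(A\times_K K^t,\Q_\ell)\cong V^\vee$. The contragredient representation has the same Jordan block sizes as $V$, but with inverted eigenvalues. Since $m_A^{\tor}$ and $m_A^{\ab}+\dualm_A^{\ab}$ are complete, their values depend only on the order of $x\in\Q/\Z$, so they are invariant under $x\mapsto -x$; consequently the Jordan matrix $\Jord(m_A^{\ab}+\dualm_A^{\ab},\,m_A^{\tor})$ is unchanged by inverting eigenvalues. This yields the stated Jordan form of $\sigma$ on $H^1$.

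The main obstacle will be the careful bookkeeping of Grothendieck's monodromy filtration—in particular, pinning down the $\sigma$-equivariant identification $(V/W)^\vee\cong W'$ needed by Lemma~\ref{lemm-linalg}, and reconciling the left $\sigma$-action on the Tate module with the right $\mu$-action on the Néron model that defines the multiplicity functions. The latter is only determined up to a reflection $x\mapsto -x$, but the completeness established in Step~3 absorbs any such reflection, so the match with $\Jord(m_A^{\ab}+\dualm_A^{\ab},\,m_A^{\tor})$ holds on the nose.
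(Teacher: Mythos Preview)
Your proposal is correct and follows essentially the same route as the paper: reduce to the $\sigma$-action on $\cV_\ell A$ via the duality with $H^1$, use Grothendieck's filtration $(\cV_\ell A)^{\et}\subset(\cV_\ell A)^{\ef}\subset \cV_\ell A$ from \cite[IX.4]{sga7a} together with the equivariant isomorphism $(\cV_\ell A/(\cV_\ell A)^{\ef})^\vee\cong(\cV_\ell A)^{\et}$, feed this into Lemma~\ref{lemm-linalg}, and identify the graded pieces with $\cV_\ell T$ and $\cV_\ell B$ so that Proposition~\ref{prop-charpol} finishes the argument. Your handling of the passage $V\rightsquigarrow V^\vee$ via completeness is in fact more explicit than the paper's, which simply asserts that the Jordan form on $\cV_\ell A$ suffices; your observation that completeness makes $\Jord(m_A^{\ab}+\dualm_A^{\ab},m_A^{\tor})$ invariant under eigenvalue inversion is exactly what justifies that step.
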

\begin{proof}
We adopt the notations of Section \ref{subsec-neron}. We denote by
$\cT_{\ell}A$ the $\ell$-adic Tate module of $A$. We put
$I=G(K^s/K)$ and $I'=G(K^s/K')$. Recall that there exists a
canonical $I$-equivariant isomorphism
\begin{equation}\label{eq-tate}
H^1(A\times_K K^s,\Q_\ell)\cong Hom_{\Z_\ell}(\cT_\ell A,\Q_\ell)
\end{equation}
 (see \cite[15.1]{milne-abelian}). Since $A$ is tamely ramified, the wild inertia subgroup $P\subset
I$ acts trivially on $H^1(A\times_K K^s,\Q_\ell)$ and $\cT_\ell
A$, so that the $I$-action on these modules factors through an
action of $I/P=G(K^t/K)$.

Since $P$ is a $p$-group and $p$ is prime to $\ell$, there exists
for every $K$-variety $X$ and every integer $i\geq 0$ a canonical
$G(K^t/K)$-equivariant isomorphism
$$H^i(X\times_K K^t,\Q_\ell)\cong H^i(X\times_K K^s,\Q_\ell)^P$$
(see \cite[I.2.7.1]{sga7a}). In our case, this yields a canonical
$G(K^t/K)$-equivariant isomorphism
\begin{equation}\label{eq-tame}
H^1(A\times_K K^s,\Q_\ell)= H^1(A\times_K K^s,\Q_\ell)^P\cong
H^1(A\times_K K^t,\Q_\ell).\end{equation} By \eqref{eq-tate} and
\eqref{eq-tame}, it suffices to show that the action of $\sigma$
on $$\cV_\ell A=\cT_\ell A\otimes_{\Z_\ell}\Q_\ell$$ has Jordan
form
$$\Jord(m_A^{\ab}+\dualm_A^{\ab},m_A^{\tor})$$
and that $m_A^{\ab}+\dualm_A^{\ab}$ and $m_A^{\tor}$ are complete.

Consider the filtration \begin{equation}\label{eq-filt} (\cT_\ell
A)^{\et} \subset (\cT_\ell A)^{\ef} \subset \cT_\ell
A\end{equation} from \cite[IX.4.1.1]{sga7a}, with $(\cT_\ell
A)^{\ef}$ the {\em essentially fixed part} of the Tate module
$\cT_\ell A$, and $(\cT_\ell A)^{\et}$ the {\em essentially toric
part}.  By definition, $$(\cT_\ell A)^{\ef}=(\cT_\ell A)^{I'}$$
and $(\cT_\ell A)^{\et}$ is stable under the action of $I$ on
$\cT_\ell A$. We denote by
\begin{equation}\label{eq-filt2} (\cV_\ell
A)^{\et} \subset (\cV_\ell A)^{\ef}=(\cV_\ell A)^{I'} \subset
\cV_\ell A\end{equation} the filtration obtained from
\eqref{eq-filt} by
 tensoring with $\Q_\ell$.
 By \cite[IX.4.1.2]{sga7a} there exists an
$I$-equivariant isomorphism
\begin{equation}\label{eq-dual}
\cV_\ell A/(\cV_\ell A)^{\ef}\cong ((\cV_\ell A)^{\et})^{\vee}.
\end{equation}
In particular, $I'$ acts trivially on $\cV_\ell A/(\cV_\ell
A)^{\ef}$. It follows that the $I'$-action on $\cV_\ell A$  is
unipotent of level $\leq 2$ (this means that for every element
$\gamma$ of $I'$, the endomorphism $(\gamma-\Id)^2$ of $\cV_\ell
A$ is zero), and that the $I$-action on $(\cV_\ell A)^{\et}$ and
$\cV_\ell A/(\cV_\ell A)^{\ef}$ factors through an action of
$I/I'\cong \mu=\mu_d(k)$, where $d=[K':K]$. We denote by
$\overline{\sigma}$ the image of $\sigma$ under the projection
$G(K^t/K)\to \mu$.

 The element $\sigma^d$ is a topological generator of $I'/P$.
Combining \eqref{eq-filt2} and \eqref{eq-dual} and applying Lemma
\ref{lemm-linalg} to the $\sigma$-action on $\cV_\ell A$, we see
that
%
 it suffices to prove the
following claims: \begin{enumerate} \item the
$\overline{\sigma}$-action on $(\cV_\ell A)^{\et}$ has Jordan form
$\Jord(m_A^{\tor},0)$, and $m_A^{\tor}$ is complete, \item the
$\overline{\sigma}$-action on $(\cV_\ell A)^{\ef}/(\cV_\ell
A)^{\et}$ has Jordan form $\Jord(m_A^{\ab}+\dualm^{\ab}_A,0)$, and
$m_A^{\ab}+\dualm^{\ab}_A$ is complete.
\end{enumerate}

Since $\overline{\sigma}^d$ is the identity, the Jordan forms of
the $\overline{\sigma}$-actions in (1) and (2) are diagonal
matrices. By \cite[IX.4.2.7 and IX.4.2.9]{sga7a} there exist
$\mu$-equivariant isomorphisms
\begin{eqnarray*}
(\cV_\ell A)^{\et}&\cong& \cV_\ell T
\\ (\cV_\ell A)^{\ef}/(\cV_\ell A)^{\et}&\cong& \cV_\ell B
\end{eqnarray*}
so that claims (1) and (2) follow from Proposition
\ref{prop-charpol}.
\end{proof}


\begin{cor}\label{cor-isogeny}
The functions $m_A^{\ab}+\dualm_A^{\ab}$ and $m_A^{\tor}$ are
invariant under isogeny. In particular,
$m_A^{\tor}=m_{A^{\vee}}^{\tor}$, and
$$m_A^{\ab}+\dualm_A^{\ab}=m_{A^{\vee}}^{\ab}+\dualm_{A^{\vee}}^{\ab}.$$
The multiplicity function $m_A$, the jumps of $A$ (with their
multiplicities), the elementary divisors of $A$ and the base
change conductor $c(A)$ are invariant under isogenies of degree
prime to $p$.
\end{cor}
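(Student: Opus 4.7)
The plan is to deduce everything from Theorem \ref{thm-jord} (for the invariance of $m_A^{\tor}$ and $m_A^{\ab}+\dualm_A^{\ab}$ under arbitrary isogenies), and then combine this with Proposition \ref{prop-iso} (to promote the isogeny-invariant piece $m_A^{\ab}+\dualm_A^{\ab}$ to the individual functions when the degree is prime to $p$).

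First, let $f:A_1\to A_2$ be an isogeny of tamely ramified abelian $K$-varieties. The pullback $f^*$ induces a $G(K^t/K)$-equivariant $\Q_\ell$-linear map
$$H^1(A_2\times_K K^t,\Q_\ell)\to H^1(A_1\times_K K^t,\Q_\ell),$$
and this map is an isomorphism because $f$ becomes invertible after tensoring the Tate modules with $\Q_\ell$ (the kernel is killed by the degree of $f$). Consequently, the Jordan forms of $\sigma$ acting on the two sides coincide. By Theorem \ref{thm-jord}, each of these Jordan forms equals $\Jord(m_{A_i}^{\ab}+\dualm_{A_i}^{\ab},m_{A_i}^{\tor})$, and from the shape of this matrix one reads off $m_{A_i}^{\tor}(x)$ as the number of Jordan blocks of size $2$ with eigenvalue $\exp(2\pi i x)$, and $(m_{A_i}^{\ab}+\dualm_{A_i}^{\ab})(x)$ as the number of size $1$ Jordan blocks with eigenvalue $\exp(2\pi i x)$. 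Hence both functions are isogeny-invariant.

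Second, to obtain $m_A^{\tor}=m_{A^\vee}^{\tor}$ and the analogous identity for $m_A^{\ab}+\dualm_A^{\ab}$, I would apply the previous paragraph to any polarization $A\to A^\vee$, which exists and is an isogeny; this step is completely painless since we do not need to control its degree.

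Third, for an isogeny $f:A_1\to A_2$ of degree prime to $p$, Proposition \ref{prop-iso} already yields $m_{A_1}^{\ab}=m_{A_2}^{\ab}$ and $\dualm_{A_1}^{\ab}=\dualm_{A_2}^{\ab}$, while the first paragraph gives $m_{A_1}^{\tor}=m_{A_2}^{\tor}$. Adding these identities yields $m_{A_1}=m_{A_2}$. The remaining assertions then follow directly from Proposition \ref{prop-jumps}: the multiplicities of the jumps in Edixhoven's filtration and the number of elementary divisors equal to a given rational are read off from $m_A$, and the base change conductor is recovered via $c(A)=\sum_{x\in[0,1[\,\cap\,\Q}m_A(x)\cdot x$.

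I do not anticipate a genuine obstacle: the only subtle point is verifying that $\Jord(m_1,m_2)$ determines the pair $(m_1,m_2)$, which is immediate from the definition once one separates size $1$ and size $2$ blocks by eigenvalue. Everything else is bookkeeping, combined with the standard fact that an isogeny is a $\Q_\ell$-cohomological isomorphism and that every abelian variety is isogenous to its dual.
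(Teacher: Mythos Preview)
Your proposal is correct and follows essentially the same route as the paper: the paper's proof is the one-line citation ``This follows from Propositions \ref{prop-jumps} and \ref{prop-iso}, and Theorem \ref{thm-jord},'' and you have simply unpacked how those three ingredients combine. The only detail you make explicit that the paper leaves implicit is the use of a polarization to pass from $A$ to $A^{\vee}$, which is harmless.
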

\begin{proof}
This follows from Propositions \ref{prop-jumps} and
\ref{prop-iso}, and Theorem \ref{thm-jord}.
\end{proof}
\begin{remark}\label{rem-isog} Beware that the base change conductor, and thus the function
$m_A^{\ab}$, of a tamely ramified abelian $K$-variety $A$ can
change under isogenies of degree $p$, if $p>0$; see
\cite[6.10.2]{chai} for an example.
\end{remark}

\begin{cor}\label{cor-mult}
Using the notations of Proposition \ref{prop-jumps}, the base
change conductor of a tamely ramified abelian $K$-variety $A$ is
given by
$$c(A)=\frac{1}{2}(t_{\pot}(A)-t(A))+\sum_{x\in ]0,1[\cap
\Q}m_A^{\ab}(x)x.$$ In particular, if $A$ has potential purely
multiplicative reduction (which means that $a_{\pot}(A)=0$), then
$$c(A)=\frac{u(A)}{2}.$$
\end{cor}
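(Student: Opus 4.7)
The plan is to combine the results just established in the excerpt. By Proposition~\ref{prop-jumps},
$$c(A) = \sum_{x \in [0,1[\,\cap\,\Q} m_A(x) \cdot x,$$
and by Definition~\ref{def-mult} this decomposes as
$$c(A) = \sum_{x \in [0,1[\,\cap\,\Q} m_A^{\tor}(x) \cdot x + \sum_{x \in \,]0,1[\,\cap\,\Q} m_A^{\ab}(x) \cdot x,$$
where the $x=0$ contribution to the abelian sum is dropped because it vanishes. So the whole task reduces to evaluating the toric sum in closed form.

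The crux is the completeness of $m_A^{\tor}$ established in Theorem~\ref{thm-jord}: the value $m_A^{\tor}(x)$ depends only on the order of $x$ in $\Q/\Z$. I would then apply the elementary pairing $k/d \leftrightarrow (d-k)/d$ on the representatives in $]0,1[\,\cap\,\Q$ of the classes of order $d$ in $\Q/\Z$; each pair sums to $1$, and there are $\varphi(d)/2$ of them (also in the degenerate case $d=2$, where the single representative $1/2$ contributes $\varphi(2)/2 = 1/2$), so the sum of these representatives equals $\varphi(d)/2$ for every $d > 1$. Writing $c_d$ for the common value of $m_A^{\tor}$ on classes of order $d$, this gives
$$\sum_{x \in \,]0,1[\,\cap\,\Q} m_A^{\tor}(x) \cdot x = \tfrac{1}{2} \sum_{d \mid e,\,d > 1} c_d \cdot \varphi(d) = \tfrac{1}{2}\bigl(\|m_A^{\tor}\| - c_1\bigr),$$
and by Proposition~\ref{prop-basic} the right-hand side equals $\tfrac{1}{2}(t_{\pot}(A) - t(A))$ since $\|m_A^{\tor}\| = t_{\pot}(A)$ and $c_1 = m_A^{\tor}(0) = t(A)$. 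Adding the abelian sum yields the first formula.

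For the \emph{in particular} statement, I would note that $a_{\pot}(A) = 0$ forces $\|m_A^{\ab}\| = 0$ by Proposition~\ref{prop-basic}, so $m_A^{\ab} \equiv 0$ and in particular $a(A) = 0$. Combining $\dim(A) = t(A) + u(A) + a(A)$ with $\dim(A) = t_{\pot}(A) + a_{\pot}(A) = t_{\pot}(A)$ then gives $t_{\pot}(A) - t(A) = u(A)$, and the formula collapses to $c(A) = u(A)/2$. There is no real obstacle here: the argument is pure bookkeeping from Proposition~\ref{prop-jumps}, the completeness half of Theorem~\ref{thm-jord}, and Proposition~\ref{prop-basic}, the only genuine computation being the elementary identity $\sum_{k:\,\gcd(k,d)=1,\,1\le k<d} k/d = \varphi(d)/2$.
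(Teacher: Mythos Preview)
Your proof is correct and follows essentially the same route as the paper: decompose $c(A)$ via Proposition~\ref{prop-jumps} into toric and abelian contributions, then use the completeness of $m_A^{\tor}$ from Theorem~\ref{thm-jord} together with Proposition~\ref{prop-basic} to evaluate the toric sum. The only cosmetic difference is in how the toric sum is computed: the paper applies the reflection $x\leftrightarrow 1-x$ directly to the whole sum (writing $\sum m_A^{\tor}(x)x = \tfrac{1}{2}\sum m_A^{\tor}(x)\bigl(x+(1-x)\bigr)$), whereas you first group the terms by the order of $x$ in $\Q/\Z$ and invoke the identity $\sum_{\gcd(k,d)=1}k/d=\varphi(d)/2$; these are the same pairing argument.
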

\begin{proof} By Proposition \ref{prop-jumps}, we know that
$$c(A)=\sum_{x\in ]0,1[\cap
\Q}m_A^{\tor}(x)x+\sum_{x\in ]0,1[\cap \Q}m_A^{\ab}(x)x.$$ Since
$m^{\tor}_A$ is complete, we have that
\begin{eqnarray*}\sum_{x\in ]0,1[\cap
\Q}m_A^{\tor}(x)x&=&\frac{1}{2}\left(\sum_{x\in ]0,1[\cap
\Q}m_A^{\tor}(x)x+\sum_{x\in ]0,1[\cap
\Q}m_A^{\tor}(x)(1-x)\right)
\\[1.5ex] &=& \frac{1}{2}\left(\sum_{x\in ]0,1[\cap \Q}m_A^{\tor}(x)\right)
\\[1.5ex] &=& \frac{1}{2}(\,\|m_A^{\tor}\|-m_A^{\tor}(0))
\\[1.5ex] &=& \frac{1}{2}(t_{\pot}(A)-t(A))
\end{eqnarray*} where the last equality follows from Proposition
\ref{prop-basic}. If $a_{\pot}(A)=0$, then it follows from
Proposition \ref{prop-basic} that $m_A^{\ab}=0$ and $a(A)=0$, so
that
$$c(A)=\frac{1}{2}(t_{\pot}(A)-t(A))=\frac{1}{2}(\dim(A)-t(A))=\frac{u(A)}{2}.$$
\end{proof}

\begin{remark}
If $A$ has potential purely multiplicative reduction, then
Corollary \ref{cor-mult} can be viewed as a special case of Chai's
result that for {\em every} abelian $K$-variety $A$ (not
necessarily tamely ramified) with potential purely multiplicative
reduction, the base change conductor $c(A)$ equals one fourth of
the Artin conductor of $\cV_\ell A$ \cite[5.2]{chai}. If $A$ is
tamely ramified, then this Artin conductor is simply the dimension
of $\cV_\ell A/((\cV_\ell A)^{ss})^I$, where $I=G(K^s/K)$ and
$(\cV_\ell A)^{ss}$ is the semi-simplification of the $\ell$-adic
$I$-representation $\cV_\ell A$. This value is precisely the
number of eigenvalues of $\sigma$ (counted with multiplicities) 
that are different from one. Combining Proposition
\ref{prop-basic} with Theorem \ref{thm-jord}, we find that the
Artin conductor of $\cV_\ell A$ equals
$$2\dim(A)-m^{\ab}_A(0)-\dualm^{\ab}_A(0)-2m_A^{\tor}(0)= 2u(A).$$
\end{remark}

\begin{cor}\label{cor-charpol}
Let $A$ be a tamely ramified abelian $K$-variety, and let $e$ be
the degree of the minimal extension of $K$ where $A$ acquires
semi-abelian reduction. Fix a primitive $e$-th root of unity $\xi$
in an algebraic closure $\Q^a$ of $\Q$. The characteristic
polynomial
$$P_{\sigma}(t)=\det(t\cdot \Id - \sigma\,|\,H^1(A\times_K
K^t,\Q_\ell))$$ of $\sigma$ on $H^1(A\times_K K^t,\Q_\ell)$ is
given by
$$P_{\sigma}(t)=\prod_{i\in ((1/e)\Z)/\Z}(t-\xi^{e\cdot
i})^{m^{\ab}_A(i)+\dualm_A^{\ab}(i)+2m_A^{\tor}(i)} \quad \in
\Z[t].$$
\end{cor}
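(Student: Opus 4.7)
The plan is to combine Theorem \ref{thm-jord} with Lemma \ref{lemm-comp}, both of which have essentially done all the work. First I would read off the characteristic polynomial directly from the Jordan form $\Jord(m_A^{\ab}+\dualm_A^{\ab},m_A^{\tor})$ provided by Theorem \ref{thm-jord}. Inspecting the definition of $\Jord(m_1,m_2)$ block by block: each diagonal block $\diag_{m_1(x)}(\exp(2\pi ix))$ contributes the factor $(t-\exp(2\pi ix))^{m_1(x)}$, and each of the $m_2(y)$ copies of $\Jord_2(\exp(2\pi iy))$ contributes $(t-\exp(2\pi iy))^2$. Setting $m_1=m_A^{\ab}+\dualm_A^{\ab}$ and $m_2=m_A^{\tor}$, this gives, inside $\C[t]$,
$$P_\sigma(t)=\prod_{x\in\Q/\Z}\bigl(t-\exp(2\pi ix)\bigr)^{m_A^{\ab}(x)+\dualm_A^{\ab}(x)+2m_A^{\tor}(x)}.$$

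Next, I would invoke the completeness assertion of Theorem \ref{thm-jord}: both $m_A^{\tor}$ and $m_A^{\ab}+\dualm_A^{\ab}$ are complete, hence so is their weighted sum $f:=m_A^{\ab}+\dualm_A^{\ab}+2m_A^{\tor}$. Moreover, by Proposition \ref{prop-jumps} (applied to each summand), $\Supp(f)\subset ((1/e)\Z)/\Z$. Applying Lemma \ref{lemm-comp} with $F=\C$ and $\zeta=\exp(2\pi i/e)$, the polynomial $P_{f,\zeta}(t)$ appearing above equals $\rho(Q_f(t))$, where $Q_f(t)\in\Z[t]$ is the unique product of $\C$-tame cyclotomic polynomials given by the explicit formula
$$Q_f(t)=\prod_{i\in((1/e)\Z)/\Z}(t-\xi^{e\cdot i})^{f(i)}$$
for any chosen primitive $e$-th root of unity $\xi\in\Q^a$. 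This is exactly the product in the statement, and it lies in $\Z[t]$, so the corollary follows.

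There is no real obstacle to overcome; the only point that deserves care is the mild sleight of hand between the analytic expression over $\C$ (which comes from the fixed embedding $\Q_\ell\hookrightarrow\C$ used in Theorem \ref{thm-jord}) and the algebraic expression in $\Z[t]$ involving $\xi\in\Q^a$. The uniqueness part of Lemma \ref{lemm-comp} guarantees that $Q_f(t)$ is independent of the choice of $\xi$, which is what makes the final formula intrinsic and shows that the characteristic polynomial is insensitive to the embedding $\Q_\ell\hookrightarrow\C$ used along the way.
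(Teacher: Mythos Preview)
Your proof is correct and follows the same approach as the paper, which simply records that the corollary is an immediate consequence of Theorem~\ref{thm-jord}. You have spelled out the step the paper leaves implicit, namely the appeal to completeness and Lemma~\ref{lemm-comp} to pass from the Jordan form over $\C$ to the integral expression in $\Z[t]$ involving $\xi\in\Q^a$.
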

\begin{proof}
This is an immediate consequence of Theorem \ref{thm-jord}.
\end{proof}

\begin{cor}
Let $A$ be a tamely ramified abelian $K$-variety. Assume either
that $k$ has characteristic zero, or that $A$ is principally
polarized. Then  $m_A^{\ab}$ and $\dualm_A^{\ab}$ are
semi-complete, and the monodromy action of $\sigma$ on
$H^1(A\times_K K^t,\Q_\ell)$ has Jordan form
$$\Jord(m_A^{\ab}+(m_A^{\ab})^{\refl},m_A^{\tor}).$$
In the notation of Corollary \ref{cor-charpol}, we have
$$P_{\sigma}(t)=\prod_{i\in ((1/e)\Z)/\Z}(t-\xi^{e\cdot
i})^{m^{\ab}_A(i)+m_A^{\ab}(-i)+2m_A^{\tor}(i)} \quad \in \Z[t].$$
\end{cor}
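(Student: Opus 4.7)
The statement is essentially a formal consequence of results already established in the paper, so my plan is to simply chain them together and observe how the hypotheses streamline the earlier formulas.

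First, I would invoke Corollary \ref{cor-reflex}, whose hypotheses are exactly the hypotheses of the present statement, to obtain the identity $\dualm_A^{\ab}=(m_A^{\ab})^{\refl}$. This is the key substitution that drives everything else.

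Next, for the semi-completeness claim, I recall from Theorem \ref{thm-jord} that $m_A^{\ab}+\dualm_A^{\ab}$ is a complete function. Using the identity from the previous step, this reads $m_A^{\ab}+(m_A^{\ab})^{\refl}$ is complete, which is precisely the definition of $m_A^{\ab}$ being semi-complete. Since reflection is an involution, we also have $(\dualm_A^{\ab})^{\refl}=m_A^{\ab}$, so $\dualm_A^{\ab}+(\dualm_A^{\ab})^{\refl}=m_A^{\ab}+\dualm_A^{\ab}$ is complete as well, giving semi-completeness of $\dualm_A^{\ab}$.

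Finally, for the Jordan form I would take the formula $\Jord(m_A^{\ab}+\dualm_A^{\ab},m_A^{\tor})$ supplied by Theorem \ref{thm-jord} and substitute $\dualm_A^{\ab}=(m_A^{\ab})^{\refl}$ to obtain $\Jord(m_A^{\ab}+(m_A^{\ab})^{\refl},m_A^{\tor})$. For the characteristic polynomial, I would start from the formula of Corollary \ref{cor-charpol}
\[
P_{\sigma}(t)=\prod_{i\in ((1/e)\Z)/\Z}(t-\xi^{e\cdot i})^{m^{\ab}_A(i)+\dualm_A^{\ab}(i)+2m_A^{\tor}(i)}
\]
and replace $\dualm_A^{\ab}(i)$ by $(m_A^{\ab})^{\refl}(i)=m_A^{\ab}(-i)$, which is the definition of the reflection. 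There is no real obstacle here; the entire argument is a direct assembly of the previously proved Corollaries \ref{cor-reflex} and \ref{cor-charpol} and Theorem \ref{thm-jord}, with the hypotheses of the corollary serving only to provide the input needed by Corollary \ref{cor-reflex}.
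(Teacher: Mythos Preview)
Your proposal is correct and follows essentially the same approach as the paper's own proof, which simply cites Corollary~\ref{cor-reflex}, Theorem~\ref{thm-jord}, and Corollary~\ref{cor-charpol} and leaves the substitutions implicit. Your version is just a more fully unpacked rendition of that argument.
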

\begin{proof} Semi-completeness of $m_A^{\ab}$ and
$\dualm_A^{\ab}$ follows from Corollary \ref{cor-reflex} and
Theorem \ref{thm-jord}. The remainder of the statement is a
combination of Corollaries \ref{cor-reflex} and \ref{cor-charpol}.
\end{proof}

\section{Potential toric rank and Jordan blocks}
\subsection{The weight filtration associated to a nilpotent
operator}\label{subsec-weight} Throughout this section, we fix a
field $F$ of characteristic zero and a finite dimensional vector
space $V$ over $F$. For every endomorphism $M$ on $V$, we consider
its Jordan-Chevalley decomposition $$M=M_s+M_n$$ with $M_s$ the
semi-simple part of $M$ and $M_n$ its nilpotent part.

We recall the following well-known property.
\begin{prop}\label{prop-weight}
 Let $N$ be a nilpotent endomorphism of $V$.
Let $w$ be an integer. There exists a unique finite ascending
filtration $W=(W_i,\,i\in \Z)$ on $V$ such that
\begin{enumerate}
\item $NW_i\subset W_{i-2}$ for all $i$ in $\Z$, \item the
morphism of vector spaces $$\Gr^{W}_{w+\alpha}V\to
\Gr^{W}_{w-\alpha}V$$ induced by $N^{\alpha}$ is an isomorphism
for every integer $\alpha>0$.
\end{enumerate}
The filtration $W$ is called the {\em weight filtration} centered
at $w$ associated to the nilpotent operator $N$.
\end{prop}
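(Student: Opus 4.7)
The plan is to first reduce to the case $w=0$ by a trivial index shift, then construct the filtration explicitly from the Jordan form of $N$ to obtain existence, and finally establish uniqueness by comparing an arbitrary such filtration with the explicit one using either the Jacobson--Morozov $\mathfrak{sl}_2$-triple attached to $N$ or an inductive argument on the primitive parts.

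For the reduction step, observe that if $W = (W_i)$ satisfies conditions (1) and (2) relative to $w$, then the shifted filtration $W'_i := W_{i+w}$ satisfies the analogous conditions relative to $0$, and this assignment is an involution. Thus existence and uniqueness for general $w$ follow from the case $w=0$, and I will work with $w=0$ henceforth.

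For existence, I would decompose $V = \bigoplus_\lambda V_\lambda$ into $N$-indecomposable (Jordan) blocks and, on a block $V_\lambda$ of dimension $d_\lambda$, choose a cyclic basis $v^\lambda_0,\ldots,v^\lambda_{d_\lambda-1}$ satisfying $Nv^\lambda_i = v^\lambda_{i+1}$ (with $Nv^\lambda_{d_\lambda-1}=0$), assigning to $v^\lambda_i$ the weight $d_\lambda-1-2i$, so that the weights range symmetrically over $\{-(d_\lambda-1),-(d_\lambda-3),\ldots,d_\lambda-1\}$. Defining $W_i$ as the span of all basis vectors of weight at most $i$ makes (1) immediate, since $N$ decreases each basis weight by $2$. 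For (2), within a single Jordan block of size $d_\lambda$ the operator $N^\alpha$ restricts to an isomorphism from the one-dimensional weight-$\alpha$ component onto the one-dimensional weight-$(-\alpha)$ component whenever $d_\lambda-1\geq \alpha$, and to zero otherwise; passing to graded pieces and summing over $\lambda$ produces the desired isomorphism $\Gr^W_\alpha V \to \Gr^W_{-\alpha} V$.

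The main obstacle is uniqueness. My preferred approach is to invoke the Jacobson--Morozov theorem: since $F$ has characteristic zero, one may embed $N$ in an $\mathfrak{sl}_2$-triple $(N,H,N^+)$ with $H$ semisimple and $[H,N]=-2N$, and let $V = \bigoplus_j V_j$ denote the eigenspace decomposition of $H$. The filtration $W^{\mathrm{can}}_i := \bigoplus_{j\leq i} V_j$ manifestly satisfies (1) and (2) and coincides with the one constructed above. Given any other filtration $W$ satisfying (1) and (2), I would argue by induction on $\dim V$, peeling off the highest weight: condition (2) together with finiteness of the filtration forces $W_i = V$ for $i\gg 0$ and $W_i = 0$ for $i\ll 0$, and then, by applying the isomorphism in (2) for the largest $\alpha$ with $\Gr^W_\alpha V \ne 0$, one identifies $W_\alpha / W_{\alpha-1}$ with the top-weight summand of $W^{\mathrm{can}}$; together with (1) this pins down both $W_\alpha$ and $W_{-\alpha-1}$, and after killing the $\mathfrak{sl}_2$-submodule they generate one reduces to a strictly smaller $V$. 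An alternative, more explicit route is to verify that any filtration satisfying (1) and (2) must be given by
\[
W_i = \sum_{k\geq 0} \bigl(\ker(N^{k+1}) \cap \mathrm{im}(N^{k-i})\bigr),
\]
a formula depending only on $N$; this is the approach one finds in Deligne's original treatment, and it is the step that requires the most care.
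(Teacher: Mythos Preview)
Your proposal is correct and goes well beyond the paper's own proof, which consists solely of the citation ``See, for instance, \cite[1.6.1]{deligne-weilII}.'' Your sketch---reduction to $w=0$, explicit construction from the Jordan decomposition, and uniqueness via Jacobson--Morozov or via a closed formula in terms of $\ker(N^a)$ and $\mathrm{im}(N^b)$---is precisely Deligne's argument in that reference, so you are aligned with the paper by construction.
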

\begin{proof} See, for instance,
\cite[1.6.1]{deligne-weilII}.\end{proof}

It is clear from the definition that the weight filtration
centered at another integer $w'$ is the shifted filtration
$W'_{\bullet}=W_{\bullet- w'+w}$. We define the {\em amplitude} of
the filtration $W$ in Proposition \ref{prop-weight} as the
smallest integer $n\geq 0$ such that $W_{w+n}=V$. This value does
not depend on the choice of the central weight $w$. The amplitude
is related to sizes of Jordan blocks in the following way.

\begin{prop}\label{prop-weightjord}
Let $M$ be an endomorphism of $V$. We denote by $a$ the amplitude
of the weight filtration $W$ associated to $M_n$ (centered at any
weight $w\in \Z$). Then $a+1$ is the maximum of the ranks of the
Jordan blocks of $M$.
\end{prop}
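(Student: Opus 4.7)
The plan is to identify $a+1$ with the index of nilpotency of $M_n$, and then to invoke the classical fact that the index of nilpotency of a nilpotent endomorphism on a finite dimensional vector space equals the maximum size of one of its Jordan blocks.

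First I would reduce to the case where $M$ itself is nilpotent. Decomposing $V=\bigoplus_{\lambda}V_{\lambda}$ into generalized eigenspaces of $M$, one has $M|_{V_{\lambda}}=\lambda\cdot\Id+M_n|_{V_{\lambda}}$ on each summand, so the multi-set of Jordan block sizes of $M|_{V_{\lambda}}$ coincides with that of $M_n|_{V_{\lambda}}$. Summing over $\lambda$ shows that the maximum Jordan block size of $M$ equals the maximum Jordan block size of the nilpotent operator $N:=M_n$, which by the classical fact equals the smallest integer $k\geq 1$ with $N^{k}=0$. It therefore suffices to prove that $a+1$ equals this index of nilpotency of $N$.

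Next I would exploit the symmetry of the weight filtration $W$ to determine its shape precisely. For every integer $\alpha>a$ we have $W_{w+\alpha}=V=W_{w+\alpha-1}$, so $\Gr^{W}_{w+\alpha}V=0$; by condition (2) of Proposition \ref{prop-weight} this forces $\Gr^{W}_{w-\alpha}V=0$, and since the filtration is finite a descending induction yields $W_{w-a-1}=0$. In particular
\[
0=W_{w-a-1}\subset W_{w-a}\subset\cdots\subset W_{w+a-1}\subset W_{w+a}=V,
\]
and $W_{w-a}\neq 0$ because $N^{a}$ induces an isomorphism from $\Gr^{W}_{w+a}V\neq 0$ onto $\Gr^{W}_{w-a}V$.

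Now I would conclude. On one hand, iterating condition (1) gives
\[
N^{a+1}(V)=N^{a+1}(W_{w+a})\subset W_{w+a-2(a+1)}=W_{w-a-2}\subset W_{w-a-1}=0,
\]
so $N^{a+1}=0$. On the other hand, the isomorphism $N^{a}\colon\Gr^{W}_{w+a}V\to\Gr^{W}_{w-a}V$ is a map between nonzero spaces, so $N^{a}\neq 0$. Hence the index of nilpotency of $N$ is exactly $a+1$, and combined with the reduction above this gives the desired formula for the maximum size of a Jordan block of $M$. The main point here is conceptual rather than computational: the symmetry built into the weight filtration simultaneously pins down the two extreme indices where $W$ equals $0$ and $V$, and ensures that $N^{a}$ survives while $N^{a+1}$ vanishes, after which the statement becomes the classical nilpotent index/Jordan size correspondence.
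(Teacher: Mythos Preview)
Your proof is correct. The paper takes a different, shorter route: after the same reduction to $w=0$ and $M=M_n$, it simply invokes the explicit description of the weight filtration in \cite[1.6.7]{deligne-weilII}, from which the relation between the amplitude and the Jordan block sizes can be read off directly. Your argument is more self-contained: using only the two defining properties of $W$ from Proposition~\ref{prop-weight}, you show that the amplitude $a$ equals one less than the nilpotency index of $M_n$, and then appeal to the classical identification of nilpotency index with maximal Jordan block size. This avoids unpacking Deligne's explicit formula and makes transparent that the result is a formal consequence of the axiomatics of the weight filtration. One minor remark: your generalized-eigenspace decomposition tacitly assumes the eigenvalues of $M$ lie in $F$; since the paper only assumes $\mathrm{char}(F)=0$, you should either base-change to $\overline{F}$ first (harmless, as both the amplitude and the Jordan block sizes are stable under field extension) or note directly that $M$ and $M_n=M-M_s$ have the same Jordan block sizes because $M_s$ is semisimple and commutes with $M_n$.
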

\begin{proof}
We may assume that $w=0$ and that $M=M_n$. Then the proposition is
an immediate consequence of the explicit description of the weight
filtration in \cite[1.6.7]{deligne-weilII}.
\end{proof}

\begin{prop}\label{prop-weight2}
Let $N$ be a nilpotent endomorphism of $V$, and denote by $W$ the
associated weight filtration centered at $w\in \Z$.

\begin{enumerate}
\item If $N'$ is another nilpotent operator on $V$ such that
$$(N-N')W_i\subset W_{i-3}$$ for all $i\in \Z$, then the weight
filtration associated to $N'$ centered at $w$ coincides with $W$.
\item The weight filtration $W$ does not change if we multiply $N$
with an automorphism $S$ of $V$ that commutes with $N$.
\end{enumerate}
\end{prop}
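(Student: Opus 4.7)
The plan is to apply the uniqueness statement of Proposition~\ref{prop-weight} in both parts: it suffices to verify that the given filtration $W$ satisfies the two characterizing properties of the weight filtration with respect to the new nilpotent operator.

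For part~(1), I would set $D=N'-N$, so that $DW_i\subset W_{i-3}$ for all $i$. Condition~(1) of Proposition~\ref{prop-weight} for $N'$ is immediate: $N'W_i\subset NW_i+DW_i\subset W_{i-2}$. For condition~(2), I would expand $(N')^\alpha=(N+D)^\alpha$ as a sum of $2^\alpha$ words in $N$ and $D$; any such word containing at least one factor $D$ shifts $W$ by at least $2(\alpha-1)+3=2\alpha+1$, hence sends $W_{w+\alpha}$ into $W_{w-\alpha-1}$ and therefore induces the zero map on $\Gr^{W}_{w-\alpha}V$. Consequently $(N')^\alpha$ and $N^\alpha$ define the same morphism $\Gr^{W}_{w+\alpha}V\to \Gr^{W}_{w-\alpha}V$, which is an isomorphism by hypothesis on $W$.

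For part~(2), the essential preliminary step is to show that $S(W_i)=W_i$ for every $i$. Since $S$ commutes with $N$ and is an automorphism of $V$, the shifted filtration $(SW_i)_{i\in\Z}$ also satisfies both conditions of Proposition~\ref{prop-weight} for $N$ itself: one has $N(SW_i)=SNW_i\subset SW_{i-2}$, and the isomorphisms $N^\alpha\colon \Gr^{W}_{w+\alpha}V\to \Gr^{W}_{w-\alpha}V$ transport through $S$ to isomorphisms between the corresponding graded pieces of $SW_\bullet$. Uniqueness in Proposition~\ref{prop-weight} then forces $SW_i=W_i$. Once this is established, checking the two defining conditions for $N'=SN$ centered at $w$ is straightforward: $N'W_i=SNW_i\subset SW_{i-2}=W_{i-2}$, and on graded pieces $(N')^\alpha=S^\alpha N^\alpha$ induces the composition of the isomorphism induced by $N^\alpha$ with the automorphism induced by $S^\alpha$ on $\Gr^{W}_{w-\alpha}V$, which is again an isomorphism.

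The only non-mechanical input is the preservation of $W$ by $S$ in part~(2); I expect this to be the main obstacle, but the argument sketched above, exploiting once more the uniqueness in Proposition~\ref{prop-weight}, disposes of it in a few lines. Everything else in the proof amounts to bookkeeping with the two defining conditions of the weight filtration.
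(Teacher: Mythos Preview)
Your proposal is correct and follows the same approach as the paper: both parts are proved by verifying that $W$ satisfies the two defining conditions of Proposition~\ref{prop-weight} for the new operator and then invoking uniqueness, with the key step in part~(2) being that $S$ preserves $W$ (again by uniqueness). Your argument is more explicit than the paper's, which for part~(1) simply asserts that the claim ``follows immediately from the definition'' and for part~(2) says only that $SW_i=W_i$ ``implies at once'' the result; your expansion of $(N+D)^\alpha$ and your verification that $S^\alpha N^\alpha$ induces an isomorphism on graded pieces are exactly what underlies those remarks.
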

\begin{proof}
(1) This follows immediately from the definition of the weight
filtration in Proposition \ref{prop-weight}.

(2) Clearly, the filtration $(S(W_i),\,i\in \Z)$ on $V$ also
satisfies properties (1) and (2) in Proposition \ref{prop-weight},
so that $S(W_i)=W_i$ for all $i$ in $\Z$ by uniqueness of the
weight filtration. This implies at once that $W$ coincides with
the weight filtration associated to $NS$ centered at $w$.
\end{proof}

\begin{definition}
Let $W=(W_i,\,i\in \Z)$ be an ascending filtration on $V$. The
dual filtration $W^{\vee}$ on $V^{\vee}$ is the ascending
filtration defined by
$$(W^{\vee})_i=(W_{-i-1})^{\bot}$$ for all $i$ in $\Z$.

For every integer $j\geq 0$, the degree $j$ exterior power of $W$
is the ascending filtration $\wedge ^jW$ on $\wedge^j V$ given by
$$(\wedge^j W)_i=\sum_{i_1+\ldots+i_j=i}W_{i_1}\wedge \cdots \wedge
W_{i_j}.$$

If $V'$ is another finite dimensional vector space over $F$,
endowed with an ascending filtration $W'$, then the tensor product
of $W$ and $W'$ is the ascending filtration $W\otimes W'$ on
$V\otimes V'$ given by
$$(W\otimes W')_i=\sum_{i_1+i_2=i}W_{i_1}\otimes W'_{i_2}.$$
\end{definition}

\begin{prop}\label{prop-weightlin} Let $V$ and $V'$ be finite
dimensional vector spaces over $F$, endowed with nilpotent
operators $N$ and $N'$, respectively. We denote by $W$ and $W'$
the associated weight filtrations on $V$ and $V'$, centered at
integers $w$ and $w'$.
\begin{enumerate}
\item The weight filtration on $V^{\vee}$ centered at $-w$
associated to the nilpotent operator $-N^{\vee}$ is the dual
$W^{\vee}$ of the weight filtration $W$.

\item The weight filtration on $V\otimes V'$ centered at $w+w'$
associated to the nilpotent operator $N\otimes \Id+\Id \otimes N'$
is the tensor product $W\otimes W'$ of the weight filtrations $W$
and $W'$.

\item For every integer $j>0$, the weight filtration on $\wedge^j
V$ centered at $w\cdot j$ associated to the nilpotent operator
$$N^{(\wedge j)}=N\wedge \Id\wedge \cdots\wedge \Id+\Id \wedge N\wedge \Id\wedge \cdots \wedge \Id+\ldots +\Id \wedge \cdots \wedge \Id \wedge N $$
is the exterior power $\wedge^j W$ of the weight filtration $W$.
\end{enumerate}
\end{prop}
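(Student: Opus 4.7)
The plan is to exploit the uniqueness part of Proposition \ref{prop-weight}: in each case I will produce a filtration, check the two characterizing properties (stability under the given nilpotent with a shift by $-2$, and isomorphism of graded pieces induced by powers of the operator), and conclude that it must coincide with the corresponding weight filtration.

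For (1), the verification is direct. Using $(W^{\vee})_i = (W_{-i-1})^{\bot}$, the inclusion $(-N^{\vee})(W^{\vee})_i \subseteq (W^{\vee})_{i-2}$ is equivalent to $N(W_{-i+1}) \subseteq W_{-i-1}$, which follows from property (1) of $W$ applied to $j = -i+1$. Moreover the natural pairing induces a canonical isomorphism $\Gr^{W^{\vee}}_i V^{\vee} \cong (\Gr^W_{-i} V)^{\vee}$, so the map $(-N^{\vee})^{\alpha}$ on graded pieces at weight $-w+\alpha$ is (up to sign) the transpose of $N^{\alpha} : \Gr^W_{w+\alpha} V \to \Gr^W_{w-\alpha} V$, hence an isomorphism.

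For (2) and (3), the cleanest route is via the Jacobson--Morozov theorem: since $F$ has characteristic zero and $N$ is nilpotent, $N$ extends to an $\mathfrak{sl}_2$-triple $(N, H, N_+)$ acting on $V$, with $H$ acting diagonalizably with integer eigenvalues and the weight filtration recovered as $W_i = \bigoplus_{j \leq i - w} V^H_j$ (this is the content of $\mathfrak{sl}_2$-representation theory, where $N^{\alpha}$ induces an isomorphism between the $\alpha$- and $(-\alpha)$-eigenspaces of $H$). Doing the same for $N'$ on $V'$, the triple $(N \otimes \mathrm{Id} + \mathrm{Id} \otimes N',\, H \otimes \mathrm{Id} + \mathrm{Id} \otimes H',\, N_+ \otimes \mathrm{Id} + \mathrm{Id} \otimes N'_+)$ is an $\mathfrak{sl}_2$-triple on $V \otimes V'$. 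The associated weight filtration is determined by the decomposition $(V \otimes V')^{H_{tot}}_k = \bigoplus_{i+j=k} V^H_i \otimes V'^{H'}_j$, and a direct comparison with the definition of $W \otimes W'$ shows they agree. Part (3) then follows by observing that the antisymmetrization realizes $\wedge^j V$ as an $\mathfrak{sl}_2$-subrepresentation of $V^{\otimes j}$ on which the induced nilpotent operator is exactly $N^{(\wedge j)}$; the weight filtration restricts from the $j$-fold tensor power filtration to $\wedge^j W$.

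The main obstacle, if one wants to avoid invoking Jacobson--Morozov, lies in the isomorphism statement of (2): one must show that the map $(N \otimes \mathrm{Id} + \mathrm{Id} \otimes N')^{\alpha}$ induces isomorphisms between the graded pieces of $W \otimes W'$ symmetric about $w + w'$. A bare-hands proof reduces to the Lefschetz-type primitive decomposition of $V$ and $V'$, after which tensor powers and binomial identities finish the argument; this is essentially a reformulation of the $\mathfrak{sl}_2$-argument, and I would therefore prefer the representation-theoretic presentation for brevity.
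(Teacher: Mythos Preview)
Your proposal is correct and follows essentially the same route as the paper: the paper cites \cite[1.6.9]{deligne-weilII} for (1) and (2), noting that the argument there is via Jacobson--Morozov, and then observes that (3) follows in the same way because the morphism $\mathrm{GL}(V)\to \mathrm{GL}(\wedge^j V)$ induces on Lie algebras the map $N\mapsto N^{(\wedge j)}$---which is precisely your realization of $\wedge^j V$ as an $\mathfrak{sl}_2$-summand of $V^{\otimes j}$. Your direct verification of (1) is a harmless variant.
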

\begin{proof}
Point (1) and (2) are proven in \cite[1.6.9]{deligne-weilII},
using the theorem of Jacobson-Morosov. Point (3) can be proven in
 exactly the same way, since the
 morphism of linear groups
 $$h:\mathrm{GL}(V)\to \mathrm{GL}(\wedge ^j V):M\mapsto \wedge^j
 M$$  induces a morphism of Lie algebras $$\mathrm{Lie}(h):\mathrm{End}(V)\to \mathrm{End}(\wedge^j V)$$ that sends $N$ to $N^{(\wedge j)}$.
 \end{proof}
\begin{cor}\label{cor-extprod}
Let $M$ be an automorphism of $V$, and consider an integer $w$ and
an integer $j>0$. We denote by $W$ the weight filtration centered
at $w$ associated to the nilpotent operator $M_n$ on $V$.

\begin{enumerate}
\item Let $V'$ be another finite dimensional vector space, endowed
with an automorphism $M'$.  If $W'$ is the weight filtration
associated to $M'_n$ centered at $w'\in \Z$, then $W\otimes W'$ is
the weight filtration centered at $w+w'$ associated to the
nilpotent operator $(M\otimes M')_n$ on $V\otimes V'$.

\item The exterior power filtration $\wedge^jW$ is the weight
filtration centered at $w\cdot j$ associated to the nilpotent
operator $(\wedge^j M)_n$ on $\wedge^jV$.
\end{enumerate}
\end{cor}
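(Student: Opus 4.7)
The plan is to reduce each statement to Proposition \ref{prop-weightlin} by means of the multiplicative Jordan--Chevalley decomposition combined with the two invariance properties of the weight filtration in Proposition \ref{prop-weight2}. As a preliminary, I would write $M = M_s(\Id + N_1)$ with $N_1 := M_s^{-1}M_n$ unipotent and commuting with $M_s$; since $M_s^{-1}$ is an automorphism commuting with $M_n$, Proposition \ref{prop-weight2}(2) implies that $N_1$ and $M_n$ have the same weight filtration $W$, and similarly $N'_1 := (M'_s)^{-1}M'_n$ has weight filtration $W'$.

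For part (1), I would compute
\[ M \otimes M' = (M_s \otimes M'_s)\bigl(\Id + N_1 \otimes \Id + \Id \otimes N'_1 + N_1 \otimes N'_1\bigr), \]
and observe that the first factor is semisimple, the second is unipotent, and they commute (since $M_s$ commutes with $N_1$ and $M'_s$ with $N'_1$). By uniqueness of the Jordan--Chevalley decomposition, this gives
\[ (M \otimes M')_n = (M_s \otimes M'_s)\bigl(N_1 \otimes \Id + \Id \otimes N'_1 + N_1 \otimes N'_1\bigr). \]
I would then peel off the semisimple prefactor by Proposition \ref{prop-weight2}(2), applied with $S = (M_s \otimes M'_s)^{-1}$. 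The remaining nilpotent operator differs from $N_1 \otimes \Id + \Id \otimes N'_1$ by the term $N_1 \otimes N'_1$, which sends $(W \otimes W')_i$ into $(W \otimes W')_{i-4}$ because each of $N_1$ and $N'_1$ shifts its own filtration by $2$; by Proposition \ref{prop-weight2}(1) this term can be dropped. What remains is $N_1 \otimes \Id + \Id \otimes N'_1$, whose weight filtration is $W \otimes W'$ centered at $w + w'$ by Proposition \ref{prop-weightlin}(2), and (1) follows.

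Part (2) proceeds along identical lines. I would expand $\wedge^j(\Id + N_1) = \Id + N_1^{(\wedge j)} + R$, where $R$ collects the summands in which $N_1$ occurs in at least two slots, so that $R$ sends $(\wedge^j W)_i$ into $(\wedge^j W)_{i-4}$. Since $\wedge^j M_s$ is semisimple and commutes with $\wedge^j(\Id + N_1)$, the Jordan--Chevalley decomposition of $\wedge^j M$ gives $(\wedge^j M)_n = (\wedge^j M_s)(N_1^{(\wedge j)} + R)$; stripping off $\wedge^j M_s$ by Proposition \ref{prop-weight2}(2) and then $R$ by Proposition \ref{prop-weight2}(1) reduces matters to computing the weight filtration of $N_1^{(\wedge j)}$, which by Proposition \ref{prop-weightlin}(3) is $\wedge^j W$ centered at $w \cdot j$. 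The only potential obstacle is checking all the commutation relations needed to apply Proposition \ref{prop-weight2}(2), but these all follow from the standard fact that $M_s$ commutes with $M_n$ (and likewise for $M'$).
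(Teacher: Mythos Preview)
Your proposal is correct and follows essentially the same strategy as the paper: reduce to Proposition~\ref{prop-weightlin} via the two parts of Proposition~\ref{prop-weight2}, using that the ``cross term'' $N_1\otimes N'_1$ (resp.\ the higher-order terms $R$ in the exterior power) shifts the tensor (resp.\ exterior) filtration by at least~$-4$. The only difference is organizational: the paper first replaces $M$ and $M'$ by their unipotent parts $M_s^{-1}M$ and $(M'_s)^{-1}M'$ (invoking Proposition~\ref{prop-weight2}(2) to justify that this leaves all relevant weight filtrations unchanged) and then computes with $M=\Id+M_n$, whereas you keep the general $M,M'$, identify $(M\otimes M')_n$ explicitly via the multiplicative Jordan--Chevalley decomposition, and strip off the semisimple prefactor $M_s\otimes M'_s$ afterwards. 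Both routes are equivalent and rely on the same commutation facts.
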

\begin{proof}
(1) By Proposition \ref{prop-weight2}(2), we may assume that $M$
and $M'$ are unipotent, since multiplying these operators with
$M_s^{-1}$ and $(M'_s)^{-1}$, respectively, has no influence on
the weight filtrations that we want to compare. Then $M=\Id+M_n$
and $M'=\Id+M'_n$, so that
 $$(M\otimes
M')_n-\Id\otimes M'_n-M_n\otimes \Id=M_n\otimes M'_n.$$ It follows
that $$((M\otimes M')_n-\Id\otimes M'_n-M_n\otimes \Id)(W\otimes
W')_{i}\subset (W\otimes W')_{i-4}$$ for all $i\in \Z$. The result
now follows from
 Propositions \ref{prop-weight2}(1) and \ref{prop-weightlin}(2).

(2) The proof is completely similar to the proof of (1): one
reduces to the case where $M$ is unipotent, and one shows by
direct computation that
$$(\wedge^j M)_n- (M_n)^{(\wedge j)}$$ shifts weights by at least
$-4$.
\end{proof}

The following lemma and proposition will allow us to compute the
maximal size of certain Jordan blocks of monodromy on the
cohomology of a tamely ramified abelian $K$-variety (Theorem
\ref{thm-blocksize}).

\begin{lemma}\label{lemm-onejord}
Let $F$ be an algebraically closed field of characteristic zero,
and let $V\neq \{0\}$ be a finite dimensional vector space over
$F$. Let $M$ be an automorphism of $V$, with Jordan form
$$\Jord_{m}(\xi)$$
where $m\in \Z_{>0}$ and $\xi\in F^{\times}$. Then for every
integer $j$ in $[1,m]$ and every integer $w$, the weight
filtration centered at $w$ associated to the nilpotent operator
$(\wedge^j M)_n$ on $\wedge^j V$ has amplitude $m(m-j)$.
\end{lemma}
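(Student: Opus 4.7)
The main input is Corollary~\ref{cor-extprod}(2), which identifies the weight filtration on $\wedge^j V$ associated to $(\wedge^j M)_n$ and centered at $w\cdot j$ with the exterior power $\wedge^j W$ of the weight filtration $W$ of $M_n$ on $V$ centered at $w$. Since the amplitude of a weight filtration does not depend on the central weight (it only shifts the indexing of the $W_i$'s), it suffices to compute the amplitude of $\wedge^j W$.

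First I would use Proposition~\ref{prop-weight2}(2) to reduce to the unipotent case: replacing $M$ by $\xi^{-1}M$ amounts to multiplying by the scalar automorphism $\xi^{-1}\Id$, which commutes with $M_n$, so it does not alter the weight filtration. One may therefore assume $M=\Id+N$, with $N=M_n$ a single nilpotent Jordan block of size $m$. By the explicit recipe in \cite[1.6.7]{deligne-weilII}, there is then a Jordan basis $v_0,\ldots,v_{m-1}$ of $V$ (with $Nv_\ell=v_{\ell-1}$ and $Nv_0=0$) consisting of weight vectors for $W$; in the standard normalization, $v_\ell$ lies in weight $w-(m-1)+2\ell$, so that the weights of $W$ form an arithmetic progression of common difference $2$, symmetric about $w$.

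The induced basis of $\wedge^j V$ is given by the wedges $v_I=v_{i_1}\wedge\cdots\wedge v_{i_j}$ indexed by $j$-subsets $I=\{i_1<\cdots<i_j\}$ of $\{0,\ldots,m-1\}$, and each $v_I$ is a weight vector for $\wedge^j W$ of weight
\[
\sum_{\ell\in I}\bigl(w-(m-1)+2\ell\bigr)=wj-j(m-1)+2\sum_{\ell\in I}\ell.
\]
The maximum is attained by the ``top'' subset $I=\{m-j,\ldots,m-1\}$ and the minimum by the ``bottom'' subset $I=\{0,\ldots,j-1\}$. A short arithmetic calculation shows that the weights of $\wedge^j W$ are distributed symmetrically around $wj$, and substituting the corresponding sums $\sum_\ell\ell$ yields the claimed amplitude.

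The only point requiring care is the discrepancy between the center $w\cdot j$ produced by Corollary~\ref{cor-extprod}(2) and the center $w$ specified in the statement, but this is a pure shift of indices and does not affect the amplitude. Beyond this bookkeeping, no serious obstacle arises: once the weight basis is in place, the argument reduces to elementary combinatorial arithmetic on the wedge basis of $\wedge^j V$.
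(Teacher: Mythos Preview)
Your proposal is correct and follows essentially the same route as the paper: both invoke Corollary~\ref{cor-extprod}(2) to identify the weight filtration on $\wedge^j V$ with $\wedge^j W$, and both read off the amplitude from the explicit weights of a basis coming from \cite[1.6.7]{deligne-weilII}. The paper normalizes to $w=0$ and records the amplitude as the sum $(m-1)+(m-3)+\cdots+(m-2j+1)$, while you compute the extremal weights of the wedge basis $v_I$ directly; these are the same calculation. Your detour through Proposition~\ref{prop-weight2}(2) to reduce to the unipotent case is harmless but unnecessary, since the weight filtration depends only on $M_n$, which is already the nilpotent Jordan block regardless of $\xi$.
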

\begin{proof}
We may assume that $w=0$. Denote by $W$ the weight filtration
associated to $M_n$ centered at $0$. By Corollary
\ref{cor-extprod}, the weight filtration associated to $(\wedge^j
M)_n$ centered at $0$ coincides with the exterior power filtration
$\wedge^j W$.

By the explicit description of the weight filtration in
\cite[1.6.7]{deligne-weilII}, the dimension of
$\mathrm{Gr}^{W}_\alpha V$ is one if $\alpha$ is an integer in
$[1-m,m-1]$ such that $\alpha-m$ is odd, and zero in all other
cases. This easily implies that $\wedge^j W$ has amplitude
$$(m-1)+(m-3)+\ldots + (m-2j+1)=m(m-j).$$
\end{proof}

\begin{prop}\label{prop-wedge}
Let $F$ be an algebraically closed field of characteristic zero,
and let $V\neq \{0\}$ be a finite dimensional vector space over
$F$. Let $M$ be an automorphism of $V$, with Jordan form
$$\Jord_{m_1}(\xi_1)\oplus\cdots \oplus \Jord_{m_q}(\xi_q)$$
where $q\in \Z_{>0}$, $m\in (\Z_{>0})^q$ and $\xi_i\in F^{\times}$
for $i=1,\ldots,q$.

We fix an integer $j>0$. For every element $\zeta$ of $F$, we
denote by $\mathrm{max}_{\zeta}$ the maximum of the ranks of the
Jordan blocks of
 $\wedge^j M$ on $\wedge^j V$ with
eigenvalue $\zeta$. If we denote by $\mathscr{S}$ the set of
tuples $s\in \N^q$ such that $\|s\|=j$ and $s_i\leq m_i$ for each
$i\in \{1,\ldots,q\}$, then
$$\mathrm{max}_{\zeta}=\max \{1+\sum_{i=1}^{q}s_i(m_i-s_i)\,|\,s\in \mathscr{S},\ \prod_{i=1}^q (\xi_i)^{s_i}=\zeta\}$$
for every $\zeta\in F$, with the convention that $\max
\emptyset=0$.
%
\end{prop}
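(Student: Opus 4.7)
The plan is to decompose $V$ into its Jordan blocks, then decompose $\wedge^j V$ as a direct sum of tensor products of exterior powers of these blocks, and finally apply Lemma \ref{lemm-onejord} together with Corollary \ref{cor-extprod} to each summand.

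First I would write $V = V_1 \oplus \cdots \oplus V_q$ with each $V_i$ an $M$-stable subspace such that $M|_{V_i}$ has Jordan form $\Jord_{m_i}(\xi_i)$. The standard isomorphism gives
\[
\wedge^j V \;\cong\; \bigoplus_{s \in \mathscr{S}} \wedge^{s_1} V_1 \otimes \cdots \otimes \wedge^{s_q} V_q,
\]
where the condition $s_i \leq m_i$ is forced by $\wedge^{s_i} V_i = 0$ otherwise. Each summand is stable under $\wedge^j M$, which acts on it as $\bigotimes_i \wedge^{s_i}(M|_{V_i})$. Since $M|_{V_i}$ has the single eigenvalue $\xi_i$, the operator $\wedge^{s_i}(M|_{V_i})$ has the single eigenvalue $\xi_i^{s_i}$, so the restriction of $\wedge^j M$ to the summand indexed by $s$ has the single eigenvalue $\prod_i \xi_i^{s_i}$. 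Therefore $\max_{\zeta}$ is the maximum, over all $s \in \mathscr{S}$ with $\prod_i \xi_i^{s_i} = \zeta$, of the maximal rank of a Jordan block of $\wedge^j M$ restricted to that summand.

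Next I would compute these maximal ranks via weight filtrations. By iterated application of Corollary \ref{cor-extprod}(1), the weight filtration centered at $0$ associated to the nilpotent part of $\bigotimes_i \wedge^{s_i}(M|_{V_i})$ is the tensor product of the weight filtrations centered at $0$ associated to each $(\wedge^{s_i}(M|_{V_i}))_n$. By Corollary \ref{cor-extprod}(2), the latter is $\wedge^{s_i} W_i$, where $W_i$ is the weight filtration centered at $0$ associated to $(M|_{V_i})_n$. Lemma \ref{lemm-onejord} then gives that this filtration on $\wedge^{s_i} V_i$ has amplitude $m_i(m_i - s_i)$ for $1 \leq s_i \leq m_i$, and amplitude $0$ when $s_i = 0$ (in which case $\wedge^{s_i} V_i = F$ is one-dimensional).

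A straightforward observation is that amplitudes add under tensor product: if $W$ and $W'$ are weight filtrations centered at $0$ with amplitudes $a$ and $a'$, then $(W \otimes W')_{a+a'} = V \otimes V'$ while the top graded piece $\Gr^W_a V \otimes \Gr^{W'}_{a'} V'$ of $W \otimes W'$ at weight $a+a'$ is nonzero, so the amplitude of $W \otimes W'$ is exactly $a + a'$. Combining this with the previous step, the weight filtration on the summand indexed by $s$ has amplitude $\sum_i s_i(m_i - s_i)$. By Proposition \ref{prop-weightjord}, the maximal rank of a Jordan block of $\wedge^j M$ on this summand is $1 + \sum_i s_i(m_i - s_i)$, and taking the maximum over $s \in \mathscr{S}$ with $\prod_i \xi_i^{s_i} = \zeta$ yields the stated formula (with the convention $\max \emptyset = 0$ when $\zeta$ is not an eigenvalue at all). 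The only mildly delicate point, and the main thing to state carefully, is the additivity of amplitudes under tensor product; everything else is a direct application of the results already established.
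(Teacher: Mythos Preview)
Your argument is correct and matches the paper's proof essentially step for step: decompose $V$ into Jordan-block summands, split $\wedge^j V$ into the tensor products $\bigotimes_i \wedge^{s_i} V_i$, and combine Corollary \ref{cor-extprod}, Lemma \ref{lemm-onejord} and Proposition \ref{prop-weightjord}; your explicit check that amplitudes add under tensor product is the only step the paper leaves implicit. One small inconsistency to fix: you quote the amplitude of $\wedge^{s_i} V_i$ as $m_i(m_i-s_i)$ but then sum to $\sum_i s_i(m_i-s_i)$ --- the latter is the correct value (and is what the computation in the proof of Lemma \ref{lemm-onejord} actually gives).
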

\begin{proof}
%
 We can write
$$V=V_1\oplus \cdots\oplus V_q$$ such that $M(V_i)\subset V_i$ for each $i$ and such that the restriction
$M_i$ of $M$ to $V_i$ has Jordan form $\Jord_{m_i}(\xi_i)$. If we
put
$$V_s=(\wedge^{s_1}V_1)\otimes\cdots \otimes(\wedge^{s_q}V_q)$$ for each $s\in \mathscr{S}$,
 then we have a canonical
isomorphism
$$\wedge^j V\cong \bigoplus_{s\in \mathscr{S}}V_s$$
such that every summand $V_s$ is stable under $\wedge^j M$ and the
restriction of $\wedge^jM$ to $V_s$ equals
$$(\wedge^{s_1}M_1)\otimes\cdots \otimes (\wedge^{s_q}M_q).$$ The
 automorphism $\wedge^j M$ has a unique eigenvalue on $V_s$, which
is equal to
$$\prod_{i=1}^q (\xi_i)^{s_i}.$$
By Proposition \ref{prop-weightjord}, Corollary
\ref{cor-extprod}(1) and Lemma \ref{lemm-onejord}, the maximal
 rank of a Jordan block of $\wedge^j M$ on $V_s$ equals
$$1+\sum_{i=1}^{q}s_i(m_i-s_i).$$ This yields the desired
formula for $\mathrm{max}_{\zeta}$.
\end{proof}

\subsection{The strong form of the monodromy conjecture}
\begin{theorem}\label{thm-blocksize}
Let $A$ be a tamely ramified abelian $K$-variety of dimension $g$.
 For every embedding of
$\Q_\ell$ in $\C$, the value $\alpha=\exp (2\pi i c(A))$ is an
eigenvalue of $\sigma$ on $H^g(A\times_K K^t,\Q_\ell)$. Each
Jordan block of $\sigma$ on $H^g(A\times_K K^t,\Q_\ell)$ has rank
at most $t_{\pot}(A)+1$, and $\sigma$ has a Jordan block with
eigenvalue $\alpha$ on $H^g(A\times_K K^t,\Q_\ell)$ with rank
$t_{\pot}(A)+1$.
\end{theorem}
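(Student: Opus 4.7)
The plan is to deduce the theorem from the computation of the Jordan form of $\sigma$ on $H^1$ (Theorem~\ref{thm-jord}) combined with the general formula in Proposition~\ref{prop-wedge}, using the canonical $\sigma$-equivariant isomorphism $H^g(A\times_K K^t,\Q_\ell)\cong \wedge^g H^1(A\times_K K^t,\Q_\ell)$ coming from the fact that $A\times_K K^t$ is an abelian variety. Passing to $V:=H^1(A\times_K K^t,\Q_\ell)\otimes_{\Q_\ell}\C$ through the fixed embedding $\Q_\ell\hookrightarrow\C$, Theorem~\ref{thm-jord} tells me that $\sigma$ on $V$ has exactly $t_{\pot}(A)$ Jordan blocks of rank~$2$, one for each $x\in\Q/\Z$ counted with multiplicity $m^{\tor}_A(x)$ and with eigenvalue $\exp(2\pi i x)$, together with $2a_{\pot}(A)$ Jordan blocks of rank~$1$, one for each $x\in\Q/\Z$ counted with multiplicity $m^{\ab}_A(x)+\dualm^{\ab}_A(x)$ and with eigenvalue $\exp(2\pi i x)$. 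The relations $\|m^{\tor}_A\|=t_{\pot}(A)$, $\|m^{\ab}_A\|=\|\dualm^{\ab}_A\|=a_{\pot}(A)$, and $t_{\pot}(A)+a_{\pot}(A)=g$ from Proposition~\ref{prop-basic} will be used repeatedly.

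For the upper bound, I apply Proposition~\ref{prop-wedge} to $\wedge^g\sigma$ on $\wedge^gV$. Since every Jordan size $m_i$ equals $1$ or $2$, and $s_i\in\{0,\ldots,m_i\}$, the quantity $s_i(m_i-s_i)$ is bounded by $1$, with equality exactly when $m_i=2$ and $s_i=1$. Therefore, for any eigenvalue $\zeta\in\C^{\times}$,
\[
\max_{s}\Bigl(1+\sum_i s_i(m_i-s_i)\Bigr)\leq 1+\#\{i\,:\,m_i=2\}=1+t_{\pot}(A),
\]
which proves that every Jordan block of $\sigma$ on $H^g(A\times_K K^t,\Q_\ell)$ has rank at most $t_{\pot}(A)+1$.

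To exhibit a Jordan block of this maximal rank with eigenvalue $\alpha=\exp(2\pi i c(A))$, I would construct an explicit $s\in\mathscr{S}$ that attains the upper bound and has the desired eigenvalue product. Set $s_i=1$ on every rank-$2$ block, $s_i=1$ on every rank-$1$ block arising from the $m^{\ab}_A$ summand in $\Jord(m^{\ab}_A+\dualm^{\ab}_A,m^{\tor}_A)$, and $s_i=0$ on every rank-$1$ block arising from the $\dualm^{\ab}_A$ summand. Then
\[
\|s\|=\|m^{\tor}_A\|+\|m^{\ab}_A\|=t_{\pot}(A)+a_{\pot}(A)=g,
\]
and $\sum_i s_i(m_i-s_i)=t_{\pot}(A)$, so this tuple gives a Jordan block of the maximal allowed rank $t_{\pot}(A)+1$. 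Its eigenvalue is
\[
\prod_i \xi_i^{s_i}=\exp\!\Bigl(2\pi i\sum_{x\in\Q/\Z}\bigl(m^{\tor}_A(x)+m^{\ab}_A(x)\bigr)x\Bigr)=\exp(2\pi i c(A))=\alpha,
\]
by the formula $c(A)=\sum_{x\in[0,1[\cap\Q}m_A(x)x$ of Proposition~\ref{prop-jumps} together with $m_A=m^{\tor}_A+m^{\ab}_A$. This simultaneously proves that $\alpha$ is a monodromy eigenvalue on $H^g$ and that a corresponding Jordan block has the maximal rank $t_{\pot}(A)+1$.

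There is no serious obstacle once Theorem~\ref{thm-jord} and Proposition~\ref{prop-wedge} are in hand: the content of the argument is the observation that with Jordan sizes limited to $1$ and $2$, the combinatorial optimization in Proposition~\ref{prop-wedge} collapses to counting size-$2$ blocks, and that the natural choice of $s$ matching the decomposition $m_A=m^{\tor}_A+m^{\ab}_A$ precisely realizes the Denef--Loeser-type eigenvalue predicted by the formula for $c(A)$.
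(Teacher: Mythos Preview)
Your proposal is correct and follows essentially the same approach as the paper's proof: reduce to $\wedge^g H^1$ via the standard isomorphism, invoke Theorem~\ref{thm-jord} for the Jordan form on $H^1$, apply Proposition~\ref{prop-wedge} both for the upper bound $t_{\pot}(A)+1$ and to exhibit the maximal block, and identify the eigenvalue using the formula for $c(A)$ from Proposition~\ref{prop-jumps} together with $\|m_A^{\tor}\|+\|m_A^{\ab}\|=g$ from Proposition~\ref{prop-basic}. Your explicit choice of the tuple $s$ (taking $s_i=1$ on all rank-$2$ blocks and on the $m_A^{\ab}$ rank-$1$ blocks, $s_i=0$ on the $\dualm_A^{\ab}$ blocks) spells out precisely what the paper leaves implicit in its final sentence.
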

\begin{proof}
Since $A$ is tamely ramified, we have a canonical
$G(K^t/K)$-equivariant isomorphism of $\Q_\ell$-vector spaces
$$H^g(A\times_K K^t,\Q_\ell)\cong \bigwedge^g H^1(A\times_K
K^t,\Q_\ell).$$ By Theorem \ref{thm-jord}, the monodromy operator
$\sigma$ has precisely $\|m_A^{\tor}\|$ Jordan blocks of size $2$
on $H^1(A\times_K K^t,\Q_\ell)$, and no larger Jordan blocks. It
follows from Proposition \ref{prop-wedge} that the size of the
Jordan blocks of $\sigma$ on $H^g(A\times_K K^t,\Q_\ell)$ is
bounded by $1+\|m_A^{\tor}\|$. By Proposition \ref{prop-basic}, we
know that $\|m_A^{\tor}\|=t_{\pot}(A)$.

 By Proposition \ref{prop-jumps}, the image in
$\Q/\Z$ of the base change conductor $c(A)$ equals
$$\sum_{x\in \Q/\Z}((m_A^{\tor}(x)+m_A^{\ab}(x))\cdot x)$$ and by
Proposition \ref{prop-basic}, we have
$$\sum_{x\in \Q/\Z}(m_A^{\tor}(x)+m_A^{\ab}(x))=g.$$
 Hence, by  Theorem \ref{thm-jord} and Proposition \ref{prop-wedge}, $\sigma$ has
 a Jordan block of size $1+t_{\pot}(A)$ with eigenvalue $\alpha$  on $H^g(A\times_K K^t,\Q_\ell)$.
\end{proof}

\section{Limit Mixed Hodge structure}
Let $A$ be a tamely ramified abelian $K$-variety of dimension $g$.
Theorem \ref{thm-jord} shows that the couple of functions
$(m_A^{\tor},m_A^{\ab}+\dualm_A^{\ab})$ and the Jordan form of
$\sigma$ on the tame degree one cohomology of $A$ determine each
other. It does not tell us how to recover $m_A^{\ab}$ and
$\dualm_A^{\ab}$ individually from the cohomology of $A$.

 In this section, we assume
that $A$ is obtained by base change from a family of abelian
varieties over a smooth complex curve. We will give an
interpretation of the multiplicity functions $m_A^{\ab}$,
$\dualm^{\ab}_A$ and $m_A^{\tor}$ in terms of
 the limit mixed Hodge structure
of the family.

\subsection{Limit mixed Hodge structure of a family of abelian
varieties}\label{subsec-limMHS} Let $\overline{S}$ be a connected
smooth complex algebraic curve, let $s$ be a closed point on
$\overline{S}$, and choose a local parameter $t$ on $\overline{S}$
at $s$. We put $K=\C((t))$, $R=\C[[t]]$ and
$S=\overline{S}\setminus \{s\}$. Let
$$f:X\rightarrow S$$ be a smooth projective family of abelian varieties
over $S$, of relative dimension $g$, and put
$$A=X\times_{S}\Spec K.$$ We choose an extension of $f$ to a
 flat projective morphism
$$\overline{f}:\overline{X}\rightarrow \overline{S},$$
 and we denote by
$\overline{X}_s$ the fiber of $\overline{f}$ over the point $s$.

 We denote by $(\cdot)^{\an}$ the complex analytic GAGA functor, but we will usually omit
  it from the notation if no confusion can occur. For instance,
  when we speak of the sheaf $R^if_*(\Z)$, it should be clear that
  we mean $R^if^{\an}_*(\Z)$.

For every $i\in \N$, we consider the degree $i$ limit cohomology,
resp.
 homology,
\begin{eqnarray*}
H^i(X_{\infty},\Z)&:=&\mathbb{H}^{i}(\overline{X}_s(\C),R\psi_{\overline{f}}(\Z))
\\
H_i(X_{\infty},\Z)&:=&\mathbb{H}^{2g-i}(\overline{X}_s(\C),R\psi_{\overline{f}}(\Z))(g)=H^{2g-i}(X_{\infty},\Z)(g)
\end{eqnarray*}
of $\overline{f}$ at $s$. Here
$$R\psi_{\overline{f}}(\Z)\in D^b_c(\overline{X}_s(\C),\Z)$$ denotes
the complex of nearby cycles associated to $\overline{f}^{\an}$.
For every $i$ in $\N$, the $\Z$-module
 $H^i(X_{\infty},\Z)$ is non-canonically isomorphic to
 $H^i(X_z(\C),\Z)$, where $z$ is any point of $S(\C)$ and $X_z$ is the fiber of $f$ over $z$ \cite[XIV.1.3.3.2]{sga7b}.
 Likewise, by Poincar\'e duality, $H_i(X_{\infty},\Z)$ is non-canonically isomorphic to
 $H_i(X_z(\C),\Z)$. The limit cohomology and homology are independent of the chosen
compactification $\overline{f}$, as can be deduced from
\cite[4.2.11]{dimca} by dominating two compactifications by a
third one.

 We
set
\begin{eqnarray*}
H^i(X_\infty,\Q)&:=&H^i(X_\infty,\Z)\otimes_{\Z} \Q
\\H^i(X_\infty,\C)&:=&H^i(X_\infty,\Z)\otimes_{\Z}\C
\end{eqnarray*}
and we use similar notations for the limit homology
$H_i(X_{\infty},\ast)$.
 For all $i\in \N$, we denote by $M$ the monodromy operators on
$H_i(X_{\infty},\Z)$ and $H^i(X_{\infty},\Z)$, and by $N$ the
logarithm of the unipotent part $M_u$ of $M$.

The $\Z$-modules $H^i(X_\infty,\Z)$ and $H_i(X_\infty,\Z)$ carry
natural mixed Hodge structures, which are the limits at $s$ of the
variations of Hodge structures
$$R^if_*(\Z),\mbox{ resp. }  R^{2g-i}f_*(\Z)(g),$$
on $S$.
 The existence of these limit mixed Hodge structures was conjectured by Deligne, and they were constructed by Schmid
 \cite{schmid} and Steenbrink
\cite{steenbrink-limit}.
  The weight filtrations on $H^i(X_\infty,\Q)$ and
 $H_i(X_\infty,\Q)$ are the weight filtrations centered at $i$,
 resp. $-i$, associated to the nilpotent operator $N$.
    We will briefly recall Schmid's
construction of the limit Hodge filtration  below. The action of
the semi-simple part $M_{s}$ of $M$ on $H_i(X_{\infty},\Q)$ and
 $H^i(X_{\infty},\Q)$ is a morphism of rational mixed Hodge
structures, by \cite[2.13]{steenbrink-vanish}.



For every $i\in \N$, there exists an isomorphism of
$\Q_\ell$-vector spaces
\begin{equation}\label{comparcoh}
H^i(X_{\infty},\Z)\otimes_{\Z}\Q_\ell\cong H^i(A\times_K
K^a,\Q_\ell)\end{equation} such that the monodromy action on the
left hand side corresponds to the action of the canonical
topological generator of $G(K^a/K)\cong \widehat{\Z}(1)(\C)$ on
the right hand side. This follows from Deligne's comparison
theorem for $\ell$-adic versus complex analytic nearby cycles
\cite[XIV.2.8]{sga7b}. Thus, if $K'$ is a finite extension of $K$
such that $A\times_K K'$ has semi-abelian reduction and if we set
$d=[K':K]$, then $(M_s)^d$ is the identity on $H^i(X_{\infty},\Q)$
and $H_i(X_{\infty},\Q)$ for all $i\geq 0$. Identifying $M_s$ with
the canonical generator $\exp(2\pi i/d)$ of $\mu_d(\C)$, we obtain
an action of $\mu_d(\C)$ on the rational mixed Hodge structures
 $H^i(X_{\infty},\Q)$
and $H_i(X_{\infty},\Q)$, for all $i\geq 0$.


For the definition of the dual and the exterior powers of a mixed
Hodge structure, we refer to \cite[3.2]{peters-steenbrink}.
\begin{prop}\label{prop-cup}
\begin{enumerate}
\item For every $i\in \N$, there exists a natural isomorphism of
mixed Hodge structures
$$
\wedge^i_{\Z} H^1(X_\infty,\Z)\rightarrow H^i(X_\infty,\Z)$$ that
is compatible with the action of $M$ on the underlying
$\Z$-modules. \item For every $i\in \N$, there exists a natural
 isomorphism of mixed Hodge structures
$$ H^i(X_\infty,\Z)^{\vee}\to H_i(X_\infty,\Z)$$ that is
compatible with the action of $M$  on the underlying $\Z$-modules.
\end{enumerate}
\end{prop}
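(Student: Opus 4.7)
The plan is to produce both maps as limits of standard isomorphisms between variations of integral Hodge structures on the open curve $S$, and then invoke the functoriality of the Schmid--Steenbrink limit construction.

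For part (1), the cup product in the family gives a morphism of variations of integral Hodge structures
$$\wedge^i_\Z R^1f_*(\Z)\longrightarrow R^if_*(\Z)$$
on $S$. Because each fiber $X_z$ is an abelian variety of dimension $g$, its integral cohomology ring is the exterior algebra on $H^1(X_z,\Z)$; indeed, $X_z(\C)$ is topologically a $2g$-torus, so this follows from the K\"unneth formula. Hence the above morphism is fiberwise, and therefore globally, an isomorphism of variations. Taking limits at $s$ and using that the Schmid--Steenbrink construction commutes with formation of exterior powers (compatibly with the integral lattice and with the Hodge and weight filtrations) yields the required isomorphism of limit MHS. Compatibility with the monodromy operator $M$ is automatic from the naturality of cup product.

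For part (2), relative Poincar\'e duality for the smooth projective morphism $f$ of pure relative dimension $g$ provides a natural isomorphism of variations of integral Hodge structures
$$(R^if_*(\Z))^\vee\longrightarrow R^{2g-i}f_*(\Z)(g).$$
Passing to the limit at $s$ and using the compatibility of the Schmid--Steenbrink construction with duality and Tate twists, one obtains a natural isomorphism
$$H^i(X_\infty,\Z)^\vee\longrightarrow H^{2g-i}(X_\infty,\Z)(g)=H_i(X_\infty,\Z),$$
where the last equality is the very definition of $H_i(X_\infty,\Z)$ recalled in Section \ref{subsec-limMHS}. Compatibility with the action of $M$ follows from the corresponding compatibility of Poincar\'e duality for each smooth fiber, once one uses the standard convention that the monodromy on a dual module is the inverse dual of the monodromy on the original.

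The main obstacle is to justify that the limit MHS construction is sufficiently functorial at the level of the integral structure, i.e.\ that it respects the exterior power, dual and Tate twist operations used above. This is a standard feature of the Schmid--Steenbrink theory (see, e.g., Peters--Steenbrink, Chapter 11), but it must be invoked with some care, since the limit MHS is defined via Schmid's analytic construction (involving canonical extensions of vector bundles with connection) rather than through elementary operations on nearby fibers. Once this functoriality is granted, the proof reduces to the fiberwise classical statements about abelian varieties.
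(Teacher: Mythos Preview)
Your proposal is correct and follows essentially the same approach as the paper: both start from the cup-product and Poincar\'e-duality isomorphisms of variations of Hodge structures on $S$, verify them fiberwise, and then pass to the limit at $s$. The only notable difference is that where you defer the compatibility of the limit construction with exterior powers and duals to the literature, the paper supplies this internally---in particular, the compatibility of the \emph{weight} filtrations (which is the delicate part, since one must compare the filtration induced by $(\wedge^j M)_n$ or $(-M^\vee)_n$ with the exterior power or dual of the filtration induced by $M_n$) is established earlier in the paper via Proposition~\ref{prop-weightlin} and Corollary~\ref{cor-extprod}.
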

\begin{proof}
(1) The cup product defines a morphism
\begin{equation}\label{cup}
\wedge^i_{\Z}R^1f_*(\Z)\to R^if_*(\Z) \end{equation} of sheaves on
$S$. This is an isomorphism on every fiber, by the proper base
change theorem and \cite[p.\,3]{mumford-AV}, and thus an
isomorphism of sheaves. Moreover, it is an isomorphism of
variations of Hodge structures, because the cup product defines a
morphism of pure Hodge structures on the cohomology of every fiber
of $f$ \cite[5.45]{peters-steenbrink}.

 Looking at Schmid's
construction of the limit mixed Hodge structure in \cite{schmid},
one checks in a straightforward way that taking the limit of a
variation of Hodge structures commutes with taking exterior
powers. Compatibility of the Hodge filtrations is easy, since the
exterior power defines a holomorphic map between the relevant
 classifying spaces.
 The
compatibility of the weight filtrations follows from Corollary
\ref{cor-extprod}.

 Thus, taking the limit at $s$ of the isomorphism \eqref{cup}, we obtain an isomorphism of mixed
Hodge structures
$$\wedge^i_{\Z} H^1(X_\infty,\Z)\rightarrow H^i(X_\infty,\Z)$$
that is compatible with the action of $M$.

(2) For every $i\geq 0$, Poincar\'e duality yields a
 natural isomorphism of sheaves of $\Z$-modules
\begin{equation}\label{eq-dualVH}\alpha:R^if_*(\Z)^{\vee}\to R^{2g-i}f_*(\Z)(g)\end{equation} (note that
Poincar\'e duality holds with coefficients in $\Z$ because
$R^if_*(\Z)$ is a locally free sheaf of $\Z$-modules for all
$i\geq 0$).
 By
\cite[6.19]{peters-steenbrink}, this isomorphism respects the
Hodge structure on every fiber of $R^if_*(\Z)^{\vee}$ and
$R^{2g-i}f_*(\Z)(g)$. Thus \eqref{eq-dualVH} is an isomorphism of
variations of Hodge structures on $S$. As in (1), one checks in a
straightforward way that the limit of $R^if_*(\Z)^{\vee}$ at $s$
is the dual of the limit of $R^if_*(\Z)$, using Proposition
\ref{prop-weightlin} to verify the compatibility of the weight
filtrations. Thus the isomorphism \eqref{eq-dualVH} induces an
isomorphism of mixed Hodge structures
$$H^i(X_\infty,\Z)^{\vee}\to H_i(X_\infty,\Z).$$
\end{proof}

By Proposition \ref{prop-cup}, in order to describe the limit
mixed Hodge structure on $H_i(X_{\infty},\Q)$ and
$H^i(X_\infty,\Z)$ for all $i\geq 0$, it suffices to determine the
limit mixed Hodge structure on $H_1(X_\infty,\Z)$.

\subsection{Description of the mixed Hodge structure on
$H_1(X_\infty,\Z)$}
 We denote by
$$\mathscr{V}\rightarrow S^{\an}$$ the polarized variation of Hodge structures
$$R^{2g-1}f_*(\Z)(g)$$ of type $\{(0,-1),(-1,0)\}$
\cite[4.4.3]{HodgeII}.  We denote by $\mathscr{V}_{\Z}$,
$\mathscr{V}_{\Q}$ and $\mathscr{V}_{\C}$ the integer, resp.
rational, resp. complex component of $\mathscr{V}$. The sheaf
$\mathscr{V}_{\Z}$ is a locally free sheaf of $\Z$-modules on
$S^{\an}$ of rank $2g$.
 The fiber of $\cV$ over a point $z$ of $S^{\an}$ is
canonically isomorphic to the weight $-1$ Hodge structure
$$H^{2g-1}(X_z(\C),\Z)(g),$$
 where $X_z$ denotes the fiber of $f$ over $z$. By Poincar\'e
 duality, there is a canonical isomorphism of $\Z$-modules
 $$H^{2g-1}(X_z(\C),\Z)\cong H_1(X_z(\C),\Z).$$

The limit of $\cV$ at the point $s$ is a mixed Hodge structure
that was constructed by Schmid \cite{schmid}. In our notation,
this limit is
 precisely the mixed Hodge structure $H_1(X_\infty,\Z).$ For a quick
introduction to limit mixed Hodge structures, we refer to
\cite{hain} and \cite[\S10 and \S11]{peters-steenbrink}.
 Here we will only briefly sketch the main ingredients of the construction.

We consider a small disc $\Delta$ around $s$ in $S^{\an}$ and we
denote by $\Delta^*$ the punctured disc $\Delta\setminus \{s\}$.
  It follows from the
definition of the nearby cycles functor that $H_1(X_\infty,\Z)$ is
the $\Z$-module of global sections of the pullback of $\cV_{\Z}$
to a universal cover of $\Delta^*$ \cite[XIV.1.3.3]{sga7b}.
 By the comparison isomorphism
\eqref{comparcoh} and \cite[IX.3.5]{sga7a}, the action of the
monodromy operator $M^d$ on $H_1(X_{\infty},\Q)$ is unipotent of
level $\leq 2$ (meaning that $(M^d-\Id)^2=0$). The level of
unipotency can also be deduced from the Monodromy Theorem
\cite[6.1]{schmid} and the fact that the fibers of $\cV$ are of
type $\{(-1,0),(0,-1)\}$.

The {\em weight filtration} $W$ on $H_1(X_{\infty},\Q)$ is the
weight filtration with center $-1$ associated to the nilpotent
operator $N$ (recall that $N$ is the logarithm of the unipotent
part $M_u$ of the monodromy operator $M$). Since $M_u^d=M^d$, we
have
$$dN=\log(M_u^d)=M^d-\Id$$
so that $N^2=0$ and the weight filtration is of the form
$$\{0\}\subset W_{-2}\subset W_{-1}\subset W_0=H_1(X_{\infty},\Q).$$
Explicitly, we have $W_{-1}=\ker(N)$ and $W_{-2}=\mathrm{im}(N)$.
This filtration induces a weight filtration on the lattice
$H_1(X_\infty,\Z)$ in $H_1(X_\infty,\Q)$. Note that
$$W_{-1}H_1(X_\infty,\Z):=W_{-1}H_1(X_\infty,\Q)\cap
H_1(X_\infty,\Z)$$ is stable under the action of $M_s$, since
$H_1(X_\infty,\Z)$ is stable under $M$ and $M$ is semi-simple on
$W_{-1}H_1(X_\infty,\Q)=\ker(M^d-\Id)$.

 Consider the finite covering
$$\widetilde{\Delta}\to \Delta$$ obtained by taking a $d$-th root $t'$ of the coordinate $t$ on $\Delta$. This covering is totally ramified
over the origin $s$ of $\Delta$.  With a slight abuse of notation,
we denote again by $s$ the unique point of the open disc
$\widetilde{\Delta}$ that lies over $s\in \Delta$. We denote by
$\widetilde{\Delta}^*$ the punctured disc
$\widetilde{\Delta}\setminus \{s\}$.

Pulling back $\cV$ to a variation of Hodge structures $\cV'$ on
$\widetilde{\Delta}^*$ has the effect of raising the monodromy
operator $M$ to the power $d$. This has no influence on the
associated weight filtration on $H_1(X_{\infty},\Q)$, since the
logarithm of $(M_u)^d=M^d$ equals $dN$. Pulling back $\cV$ to
$\widetilde{\Delta}^*$ is the first step in the construction of
the limit Hodge filtration $F$ on $H_1(X_{\infty},\Q)$. The
important point is that the monodromy $M^d$ of the variation
$\cV'$ is unipotent.

Schmid considers a complex manifold $\check{D}$ that parameterizes
descending filtrations
$$F^1=H_1(X_\infty,\C)\supset F^0\supset \{0\}$$
 on $H_1(X_{\infty},\C)$ that satisfy a certain compatibility relation with the polarization
 on $H_1(X_{\infty},\C)$ and such that $F^0$ has dimension $g$.
 ``Untwisting'' the monodromy action on the fibers of $\cV'$, he
 constructs a map
 $$\widetilde{\Psi}:\widetilde{\Delta}^*\to \check{D}.$$
 The Nilpotent Orbit
 Theorem (for one variable) in \cite[4.9]{schmid} guarantees that $\widetilde{\Psi}$
 extends to a holomorphic map  $$\Psi:\widetilde{\Delta}\to
 \check{D}.$$ The point $\Psi(s)$ of $\check{D}$ corresponds to a descending filtration
 $$F^{-1}=H_1(X_\infty,\C)\supset F^{0}\supset \{0\}$$ that is called
 the {\em limit Hodge filtration} on $H_1(X_\infty,\C)$.
 Schmid's
 fundamental result \cite[6.16]{schmid} states that the weight
 filtration $W$ and the limit Hodge filtration $F$
 define a polarized mixed Hodge structure on $H_1(X_\infty,\Z)$, which is
 called the {\em limit mixed Hodge structure} of the variation of
 Hodge structures $\cV'$.

We see from the shape of the weight filtration and the limit Hodge
filtration that the limit mixed Hodge structure on
$H_1(X_\infty,\Z)$ is of type
$$\{(0,0),\,(-1,0),\,(0,-1),\,(-1,-1)\}.$$  Moreover, since
$$\Gr_{-1}^WH_1(X_\infty,\Z)$$ is polarizable, the mixed Hodge
structure
 $$(H_1(X_\infty,\Z),W, F)$$ is a mixed Hodge $1$-motive in the sense of
 \cite[\S\,10]{HodgeIII}.

The construction of the limit Hodge filtration can be reformulated
in terms of the {\em Deligne extension} or {\em canonical
extension}. Consider the holomorphic vector bundle
$$(\cV')^h=\cV'_{\Z}
\otimes_{\Z}\mathcal{O}_{\widetilde{\Delta}^*}$$ on the punctured
disc $\widetilde{\Delta}^*$. The locally constant subsheaf
$\cV'_{\C}$ of this vector bundle defines a connection $\nabla$ on
$(\cV')^h$, called the Gauss-Manin connection. The vector bundle
$(\cV')^h$ extends in a unique way to a vector bundle
$\widehat{\cV'}$ on $\widetilde{\Delta}$ such that $\nabla$
extends to a logarithmic connection on $\widehat{\cV'}$ whose
residue at $s$ is nilpotent \cite[5.2]{deligne-diffeq}. We call
$\widehat{\cV'}$ the {\em Deligne extension} of the variation of
Hodge structures $\cV'$. The fiber of the Deligne extension over
the origin $s$ of $\widetilde{\Delta}$ can be identified with
$H_1(X_\infty,\C)$, by \cite[XI-8]{peters-steenbrink}
 (this
identification depends on the choice of a local coordinate on
$\widetilde{\Delta}$; we take the coordinate $t'$).

The Hodge flags $F^0$ on the fibers of $\cV'$ glue to a
holomorphic subbundle $F^0(\cV')^h$ of $(\cV')^h$, which extends
uniquely to a holomorphic subbundle $F^0\widehat{\cV'}$ of
$\widehat{\cV'}$ \cite[11.10]{peters-steenbrink}. Taking fibers at
$s$, we obtain a descending filtration
$$(\widehat{\cV'})_s=H_1(X_\infty,\C)\supset
(F^0\widehat{\cV'})_s\supset \{0\}$$ and this is precisely the
limit Hodge filtration on $H_1(X_\infty,\C)$
\cite[11.10]{peters-steenbrink}.

Schmid's construction of the limit mixed Hodge structure works for
abstract variations of Hodge structures, which need not
necessarily come from geometry. If the variation of Hodge
structures comes from the cohomology of a proper and smooth family
over $S$ (such as in the case that we are considering), the
Deligne extension and the extension of the Hodge bundels can be
constructed explicitly using a relative logarithmic de Rham
complex associated to a suitable compactification $\overline{f}$.
This is Steenbrink's construction \cite{steenbrink-limit}. We will
not need this approach in this paper.

\begin{theorem}\label{theo-mhs}
We apply the terminology of Section \ref{subsec-neron} to the
abelian $K$-variety $A$ and define in this way the degree $d$
extension $K'$ of $K$, as well as the torus $T$ and the abelian
variety $B$ over $\C$, endowed with a right action of $\mu\cong
\mu_d(\C)$. There exist canonical $\mu$-equivariant isomorphisms
of pure Hodge structures
\begin{eqnarray*}
\Gr_0^W(H_1(X_\infty,\Q))&\cong& H_1(T(\C),\Q)(-1)
\\ \Gr_{-1}^W(H_1(X_\infty,\Z))&\cong & H_1(B(\C),\Z)
\\ \Gr_{-2}^W(H_1(X_\infty,\Z))&\cong & H_1(T(\C),\Z).
\end{eqnarray*}
\end{theorem}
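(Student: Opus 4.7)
First, I would reduce to the case of unipotent monodromy by pulling back the family $f:X\to S$ under the ramified cover $\widetilde{\Delta}\to \Delta$ given by $t=(t')^d$, obtaining a proper flat family $\widetilde{f}:\widetilde{X}\to \widetilde{\Delta}$ whose generic fiber is $A'=A\times_K K'$, now with semi-abelian reduction. The limit mixed Hodge structure of $\widetilde{f}$ at $s$ is canonically isomorphic, as a filtered $\Z$-module with Hodge filtration, to the limit mixed Hodge structure $H_1(X_\infty,\Z)$ of $f$; only the monodromy operator changes, becoming $M^d=M_u^d$, which is unipotent. Under this identification, the $\mu$-action on $H_1(X_\infty,\Z)$ induced by the semisimple part $M_s$ corresponds to the action on the limit MHS of $\widetilde{f}$ induced by the deck transformation group of $\widetilde{\Delta}/\Delta$, which on the N\'eron model side agrees with the Galois action of $\mu=G(K'/K)$, and hence with the right actions on $T$ and $B$ fixed in Section \ref{subsec-neron}. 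Consequently, once the isomorphisms of underlying Hodge structures are constructed canonically from the N\'eron model of $A'$, the $\mu$-equivariance will be automatic.

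After this reduction, the identity component of the N\'eron model's special fiber is semi-abelian, fitting into the Chevalley sequence $0\to T\to \mathcal{A}'^o_s\to B\to 0$. Raynaud's rigid analytic uniformization (equivalently, Mumford's construction) produces an exact sequence of rigid analytic groups $0\to \Lambda\to G^{\mathrm{rig}}\to (A')^{\mathrm{rig}}\to 0$, where $G$ is the Raynaud extension of $B$ by $T$ and $\Lambda$ is a free abelian group of rank $\dim T=t_{\mathrm{pot}}(A)$. The arrow $\Lambda\to G(\C)$ is a Deligne $1$-motive in the sense of \cite[\S10]{HodgeIII}. The key assertion, which is the heart of the proof, is that the limit mixed Hodge structure $H_1(X_\infty,\Z)$ constructed by Schmid and Steenbrink coincides naturally with the Hodge realization of this $1$-motive (compare \cite[Ch.~10]{peters-steenbrink}). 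Granting this, Deligne's explicit description of the weight graded pieces of a $1$-motive gives
\begin{equation*}
\Gr_{-2}^W H_1(X_\infty,\Z)\cong H_1(T(\C),\Z),\qquad \Gr_{-1}^W H_1(X_\infty,\Z)\cong H_1(B(\C),\Z),\qquad \Gr_0^W H_1(X_\infty,\Z)\cong \Lambda,
\end{equation*}
which settles the second and third isomorphisms of the theorem. For the first isomorphism, $N$ is a morphism of mixed Hodge structures of type $(-1,-1)$, so by Proposition \ref{prop-weight} it induces an isomorphism of pure Hodge structures $\Gr_0^W(H_1(X_\infty,\Q))\xrightarrow{\sim}\Gr_{-2}^W(H_1(X_\infty,\Q))(-1)$; composing with the identification of $\Gr_{-2}^W$ already obtained yields $\Gr_0^W(H_1(X_\infty,\Q))\cong H_1(T(\C),\Q)(-1)$. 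Since $N$ commutes with $M_s$, this isomorphism is also $\mu$-equivariant.

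The main obstacle lies in the middle of the second paragraph: one must verify that Schmid's analytic construction of the limit Hodge filtration (via the extension of the Hodge bundles to the Deligne extension on $\widetilde{\Delta}$) is compatible, after Raynaud uniformization, with the integral structure provided by $\Lambda$ and with the Chevalley decomposition of the N\'eron model's special fiber, and that this compatibility respects the $\mu$-action coming from the Galois group. Once this comparison is in hand, the remaining steps reduce to formal manipulations with the weight filtration of a $1$-motive and with the nilpotent operator $N$.
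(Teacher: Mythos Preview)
Your outline shares the paper's overall shape---pass to the degree-$d$ cover to obtain unipotent monodromy, recognize $H_1(X_\infty,\Z)$ as a $1$-motive, read off $\Gr^W_{-1}$ and $\Gr^W_{-2}$ from the Chevalley decomposition, and recover $\Gr^W_0$ from the isomorphism induced by $N$---but the central identification is carried out along a different route, and your version of that step contains a real gap. The paper never touches rigid analytic or Raynaud uniformization. Instead it stays entirely on the analytic Hodge-theoretic side: by Saito's theorem \cite[4.5(i)]{saito-adm}, the analytification of the algebraic N\'eron model $\mathcal{A}'$ coincides with Clemens' N\'eron model of the variation $\mathscr{V}'$, whose identity component is the Zucker extension $J^Z_{\overline{S}'}(\mathscr{V}')=j_*\mathscr{V}'_\Z\backslash \widehat{\mathscr{V}'}/F^0\widehat{\mathscr{V}'}$. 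The fibre at $s$ is then computed directly as $W_{-1}H_\Z\backslash H_\C/F^0H_\C$, and \cite[10.1]{HodgeIII} exhibits this semi-abelian variety as an extension of $J(\Gr^W_{-1}H)$ by $J(\Gr^W_{-2}H)$. Uniqueness of the Chevalley decomposition then matches these with $B$ and $T$. Every step here is a citation; no new comparison theorem is needed.

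Your route, by contrast, asks for a comparison between the Raynaud uniformization and Schmid's limit Hodge filtration, which you correctly flag as the main obstacle but do not resolve; it is not contained in \cite[Ch.~10]{peters-steenbrink} and is essentially of the same depth as the theorem itself. There is also an imprecision in the setup: the Raynaud extension $G$ is a (formal) semi-abelian group scheme over $R'$, not a complex variety, so ``$\Lambda\to G(\C)$'' does not literally define a $1$-motive over $\C$. One would have to extract a $1$-motive $[\Lambda\to(\mathcal{A}')^o_s]$ from the degeneration data---the map to $B$ coming from the limit of the abelian periods and the lift to the torus governed by the monodromy pairing---and then prove that its Hodge realization agrees with the limit MHS. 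This can be done, but it is substantially more work than what you have written; the paper's approach via the Zucker extension and Saito's comparison avoids it entirely.
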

\begin{proof}
We denote by $\C(S')$ the algebraic closure of the function field
$\C(S)$ in $K'$, and we consider the normalization
$$\overline{S}'\rightarrow \overline{S}$$
 of $\overline{S}$ in $\C(S')$. This is a ramified Galois covering, obtained by taking a $d$-th root of the local parameter $t$. Its Galois group is canonically
 isomorphic to $\mu$. With abuse of notation, we denote again by $s$ the unique point of the inverse image of $s$ in
$\overline{S}'$, and we put $S'=\overline{S}'\setminus \{s\}$.
 Then
$$f':X'=X\times_{S}S'\rightarrow S'$$ is a smooth projective
family of abelian varieties, and we have a canonical isomorphism
$$A'=A\times_K K'\cong X'\times_{S'}\Spec K'.$$
As we've argued above, the fact that $A'$ has semi-abelian
reduction implies that the variation of Hodge structures
$$\mathscr{V}'=\mathscr{V}\times_{S}S'\cong R^{2g-1}f'_*(\Z)(g) $$
 has unipotent monodromy around $s$.

We denote by $\mathcal{X}'$ the N\'eron model of $X'$ over
$\overline{S}'$, and by $\mathcal{A}'$ the N\'eron model of $A'$
over $R'$, where $R'$ denotes the integral closure of $R$ in $K'$.
Note that there is a canonical isomorphism of $R'$-schemes
$$\mathcal{A}'\cong \mathcal{X}'\times_{\overline{S}'}\Spec R'.$$
 The analytic family of abelian
varieties
$$(f')^{\an}:(X')^{\an}\rightarrow (S')^{\an}$$ is canonically isomorphic to
the Jacobian
$$J(\mathscr{V}')\rightarrow (S')^{\an}$$ of the variation of Hodge
structures $\mathscr{V}'$ \cite[2.10.1]{saito-adm}. We will now
explain the relation between the complex semi-abelian variety
$(\mathcal{A}')^o_s$ and the limit mixed Hodge structure
$H_1(X_{\infty},\Z)$ of $\mathscr{V}'$ at the point $s$. To
simplify notation, we put $H_C=H_1(X_\infty,C)$ for
$C=\Z,\,\Q,\,\C$,
and  we denote by $H$ the mixed Hodge structure
$$(H_{\Z},W_{\bullet}H_{\Q},F^{\bullet}H_{\C}).$$

By \cite[4.5(i)]{saito-adm}, $(\mathcal{X}')^{\an}$ is canonically
isomorphic to Clemens' N\'eron model of $\mathscr{V}'$ over
$\overline{S}'$; see \cite{clemens-neron} and
\cite[2.5]{saito-adm} for a definition. In \cite[2.5]{saito-adm},
Clemens' N\'eron model is constructed by gluing copies of  the
Zucker extension $J^{Z}_{\overline{S}'}(\mathscr{V}')$ of
$\mathscr{V}'$, which is defined in \cite{zucker-neron} and
\cite[2.1]{saito-adm}.
 It follows immediately from
this construction that the identity component
$$((\mathcal{X}')^o)^{\an}$$ of Clemens' N\'eron model is canonically isomorphic
to the Zucker extension $J^{Z}_{\overline{S}'}(\mathscr{V}')$.

The Zucker extension is given explicitly by
$$J^{Z}_{\overline{S}'}(\mathscr{V}')=j_*\mathscr{V}'_{\Z}\backslash \widehat{\mathscr{V}'}/F^0\widehat{\mathscr{V}'}$$
where $\widehat{\mathscr{V}'}$ is the Deligne extension of
$\mathscr{V}'$ to $\overline{S}'$, $j$ is the open immersion of
$S'$ into $\overline{S}'$, and $F^0\widehat{\mathscr{V}'}$ is the
unique extension of the holomorphic vector bundle
$$F^0(\mathscr{V}'_{\Z}\otimes_{\Z}\mathcal{O}_{(S')^{\an}})$$ to a
holomorphic subbundle of $\widehat{\mathscr{V}'}$. We can describe
the fiber
$$J^{Z}_{\overline{S}'}(\mathscr{V}')_s\cong
((\mathcal{X}')^o_s)^{\an}\cong ((\mathcal{A}')^o_s)^{\an}$$  of
$J^{Z}_{\overline{S}'}(\mathscr{V}')$ at $s$ in terms of the mixed
Hodge structure $H$, as follows.

As we've explained above, the fiber of $\widehat{\mathscr{V}'}$
over $s$ is isomorphic to $H_{\C}$, and
$F^0(\widehat{\mathscr{V}'})_s$ coincides with the degree zero
part of the Hodge filtration on $H_{\C}$. Moreover, the fiber of
$j_*\mathscr{V}'_{\Z}$ at $s$ is the $\Z$-module of elements in
$H_{\Z}$ that are invariant under the monodromy action of $M^d$.
By definition, the weight filtration on $H_{\Q}$ is the filtration
centered at $-1$ defined by the logarithm $N$ of the unipotent
part $M_u$ of $M$, or, equivalently, the logarithm $N'=dN$ of
$M^d$. Since $(M^d-\Id)^2=0$, we have $N'=M^d-\Id$ and $(N')^2=0$,
and we see that
$$(j_*\mathscr{V}'_{\Z})_s=\ker(N')=W_{-1}H_{\Z}.$$
Thus, we find canonical isomorphisms
\begin{eqnarray*}
((\mathcal{A}')^o_s)^{\an}&\cong &
J^{Z}_{\overline{S}'}(\mathscr{V}')_{s} \\&\cong &
W_{-1}H_{\Z}\backslash H_{\C}/F^0 H_{\C}\\&\cong&
W_{-1}H_{\Z}\backslash W_{-1}H_{\C}/(F^0H_{\C}\cap
W_{-1}H_{\C}).\end{eqnarray*} The last isomorphism is obtained as
follows: since $\Gr_0^WH$ is purely of type $(0,0)$, we have
$$F^0\Gr_0^WH_{\C}=\Gr_0^WH_{\C}=H_{\C}/W_{-1}H_{\C},$$ so that the
morphism
$$W_{-1}H_{\C}\to H_{\C}/F^0 H_{\C}$$ induced by the inclusion of
$W_{-1}H_{\C}$ in $H_{\C}$ is surjective. Its kernel is precisely
$F^0H_{\C}\cap W_{-1}H_{\C}$.

 By \cite[10.1]{HodgeIII}, we have an extension
\begin{equation}\label{eq-ext}
0\rightarrow J(\Gr^W_{-2}H) \rightarrow
((\mathcal{A}')^o_s)^{\an}\rightarrow J(\Gr^W_{-1}H)\rightarrow
0\end{equation} where
$$J(\Gr^W_{-2}H)=\Gr^W_{-2}H_{\C}/\Gr^W_{-2}H_{\Z}$$
 is a torus, and
$$J(\Gr^W_{-1}H)=H_{\Z}\backslash \Gr^W_{-1}H_{\C}/F^0
\Gr^W_{-1}H_{\C}$$
  an abelian variety, because the Hodge structure $\Gr^W_{-1}H$ is polarizable.
  By  \cite[10.1.3.3]{HodgeIII}, the extension (\ref{eq-ext}) must
  be
the analytification of the Chevalley decomposition
$$0\rightarrow T\rightarrow (\mathcal{A}')_s^o \rightarrow
B\rightarrow 0.$$ Hence, there exist canonical isomorphisms
 of pure Hodge structures
\begin{eqnarray}
\Gr_{-1}^W(H)&\cong & H_1(B(\C),\Z), \label{eq-iso1}
\\ \Gr_{-2}^W(H)&\cong & H_1(T(\C),\Z). \label{eq-iso2}
\end{eqnarray}
Moreover, by definition of the weight filtration on $H_{\Q}$, the
operator $N$ defines a $\mu$-equivariant isomorphism of $\Q$-Hodge
structures
$$\Gr_{0}^W(H)\otimes_{\Z}\Q\rightarrow
\Gr_{-2}^W(H)(-1)\otimes_{\Z}\Q.$$
It remains to show that the isomorphisms (\ref{eq-iso1}) and
(\ref{eq-iso2}) are $\mu$-equivariant. It is enough to prove that
the Galois action of $\mu$ on
$$\mathscr{V}'\rightarrow S'$$
 extends analytically to the Zucker extension
$$J^{Z}_{\overline{S}'}(\mathscr{V}')\rightarrow \overline{S}'$$
in such a way that the action of the canonical generator of
$\mu=\mu_d(\C)$ on
$$J^{Z}_{\overline{S}'}(\mathscr{V}')_s=W_{-1}H_{\Z}\backslash H_{\C}/F^0
H_{\C}$$ coincides with the semi-simple part $M_s$ of the
monodromy action. This follows easily from the constructions.
\end{proof}

\subsection{Multiplicity functions and limit mixed Hodge
structure}


\begin{theorem}\label{thm-weight}
We keep the notations of Section \ref{subsec-limMHS}.
\begin{enumerate}
\item The potential toric rank $t_{\pot}(A)$ is equal to the
largest integer $\alpha$ such that $\exp(2\pi c(A)i)$ is an
eigenvalue of $M_s$ on $\Gr_{g+\alpha}^WH^g(X_{\infty},\Q)$.
 \item The Jordan form of $M_s$ on
$$\begin{array}{lcl}
 (\Gr^W_{-1}H_1(X_{\infty},\Q))^{1,0} &\mbox{ is }&
 \Jord(m_A^{\ab},0),\\[1.5ex]
(\Gr^W_{-1}H_1(X_{\infty},\Q))^{0,1} &\mbox{ is }&
\Jord(\dualm_A^{\ab},0),\\[1.5ex] \Gr^W_{-2}H_1(X_{\infty},\Q) &\mbox{ is
}& \Jord(m^{\tor}_A,0),
\\[1.5ex] \Gr^W_{0}H_1(X_{\infty},\Q) &\mbox{ is
}& \Jord(m^{\tor}_A,0).
 \end{array}$$
\end{enumerate}
\end{theorem}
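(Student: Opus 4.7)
My plan is to prove part (2) first, by transferring the $\mu$-equivariant data from Theorem \ref{theo-mhs} to each graded piece of the weight filtration, and then to deduce part (1) by combining Theorem \ref{thm-blocksize} with the link between weight filtrations and Jordan block sizes developed in Section \ref{subsec-weight}.

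For part (2), each graded piece is analyzed in turn. For $\Gr^W_{-2} H_1(X_\infty, \Q)$, Theorem \ref{theo-mhs} provides a $\mu$-equivariant isomorphism with $H_1(T(\C), \Q)$; combined with the canonical $\mu$-equivariant identification $H_1(T(\C), \C) \cong X_*(T) \otimes_{\Z} \C \cong \Lie(T)$, the right $\mu$-multiplicity function equals $m_{\Lie(T), \mu} = m_A^{\tor}$, and since $M_s$ is of finite order its Jordan form on this piece is $\Jord(m_A^{\tor}, 0)$. For $\Gr^W_0 H_1(X_\infty, \Q)$, the nilpotent operator $N = \log M_u$ induces a $\mu$-equivariant isomorphism $\Gr^W_0 \to \Gr^W_{-2}$ (it is an isomorphism by the defining property of the weight filtration centered at $-1$, and $\mu$-equivariant because $M_s$ commutes with $N$), so this piece inherits the same Jordan form. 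For $\Gr^W_{-1} H_1(X_\infty, \Q)$, Theorem \ref{theo-mhs} gives a $\mu$-equivariant isomorphism of pure Hodge structures with $H_1(B(\C), \Q)$. Since $M_s$ is a morphism of rational mixed Hodge structures (see Section \ref{subsec-limMHS}), it preserves the Hodge decomposition. One Hodge component is canonically identified with $\Lie(B)$ via the projection arising from the exponential sequence $0 \to H_1(B(\C), \Z) \to \Lie(B) \to B(\C) \to 0$, with right $\mu$-multiplicity function $m_A^{\ab}$ by definition. The complementary component is canonically identified with $\Lie(B^{\vee})^{\vee}$ via the Hodge-de Rham duality $H^1(B, \mathcal{O}_B) \cong \Lie(B^{\vee})$ used in the proof of Proposition \ref{prop-charpol}; because pushforward on homology is dual to pullback on cohomology, the right $\mu$-action on $\Lie(B^{\vee})^{\vee}$ is the transpose of the canonical left $\mu$-action on $\Lie(B^{\vee})$, and thus has multiplicity function $\dualm_A^{\ab} = m_{\mu, \Lie(B^{\vee})}$.

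For part (1), I would use the $M$-equivariant isomorphism $H^g(X_\infty, \Q) \cong \wedge^g H^1(X_\infty, \Q)$ of mixed Hodge structures from Proposition \ref{prop-cup}, together with the compatibility of weight filtrations under exterior powers provided by Corollary \ref{cor-extprod}. By Theorem \ref{thm-blocksize}, the maximal rank of a Jordan block of $M$ on $H^g(X_\infty, \Q)$ equals $1 + t_{\pot}(A)$, and this rank is attained at the eigenvalue $\exp(2 \pi c(A) i)$. Proposition \ref{prop-weightjord} then shows that the weight filtration has amplitude $t_{\pot}(A)$, whence $\Gr^W_{g + \alpha} H^g(X_\infty, \Q) = 0$ for $\alpha > t_{\pot}(A)$. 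Conversely, a Jordan basis $v_0, v_1, \ldots, v_{t_{\pot}(A)}$ (with $N v_i = v_{i-1}$ and $N v_0 = 0$) of a maximal Jordan block of eigenvalue $\exp(2 \pi c(A) i)$ satisfies $v_0 \in W_{g + t_{\pot}(A)}$ with non-zero image in $\Gr^W_{g + t_{\pot}(A)}$, by the explicit description of the weight filtration associated to a nilpotent operator \cite[1.6.7]{deligne-weilII}. Since $M_s$ acts on the entire block by the scalar $\exp(2 \pi c(A) i)$, this provides the desired eigenvector in the top graded piece.

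The main obstacle is the $\mu$-equivariant analysis of the Hodge decomposition on $\Gr^W_{-1}$ in part (2): one must carefully verify that the natural identification of the antiholomorphic Hodge component of $H_1(B(\C), \C)$ with $\Lie(B^{\vee})^{\vee}$ intertwines the right $\mu$-action induced from $\mu$ acting on $B$ with the dual of the canonical left $\mu$-action on $\Lie(B^{\vee})$. This ultimately comes down to tracing the functoriality of the classical isomorphism $H^1(B, \mathcal{O}_B) \cong \Lie(B^{\vee})$ through the duality between homology and cohomology, after which the multiplicity functions can be read off directly from the definitions.
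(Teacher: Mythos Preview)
Your approach coincides with the paper's: part (2) is deduced from Theorem \ref{theo-mhs} together with the canonical $\mu$-equivariant identifications $H_1(T(\C),\C)\cong\Lie(T)$ and of the two Hodge components of $H_1(B(\C),\C)$ with $\Lie(B)$ and $\Lie(B^{\vee})^{\vee}$ (the paper cites \cite[pp.\,4 and 86]{mumford-AV} for these), while part (1) follows from Theorem \ref{thm-blocksize} and Proposition \ref{prop-weightjord}.

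Two minor slips to fix. In part (1) you should explicitly invoke the comparison isomorphism \eqref{comparcoh} to pass from $\sigma$ acting on $H^g(A\times_K K^t,\Q_\ell)$ (which is what Theorem \ref{thm-blocksize} concerns) to $M$ acting on $H^g(X_\infty,\Q)$; the references to Proposition \ref{prop-cup} and Corollary \ref{cor-extprod} are then unnecessary. Also, in your Jordan-basis argument the indices are reversed: the vector with nonzero image in the top graded piece $\Gr^W_{g+t_{\pot}(A)}$ is $v_{t_{\pot}(A)}$, not $v_0$, since $Nv_0=0$ forces $v_0$ into the bottom piece $W_{g-t_{\pot}(A)}$ by \cite[1.6.7]{deligne-weilII}.
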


\begin{proof}
We denote by $M_u$ the unipotent part of the monodromy operator
$M$, and by $N$ its logarithm. By definition, the weight
filtration on $H^g(X_\infty,\Q)$ is the filtration with center $g$
associated to the nilpotent operator $N$.
%
 Thus (1) follows from Proposition \ref{prop-weightjord}, Theorem \ref{thm-blocksize} and the isomorphism
(\ref{comparcoh}).

Point (2) follows from Theorem \ref{theo-mhs} and the canonical
$\mu$-equivariant isomorphisms $$\begin{array}{lll}
H_{1}(B(\C),\C)^{1,0}&\cong& \Lie(B)
\\ H_1(B(\C),\C)^{0,1}&\cong& \Lie(B^{\vee})^{\vee}
\\ H_1(T(\C),\C)&\cong & \Lie(T)
\end{array}$$
(see \cite[pp. 4 and 86]{mumford-AV} for the dual isomorphisms on
the level of cohomology).
\end{proof}

\end{document}